\def\E{\mathbb{E}}
\def\Var{\mathrm{Var}}
\def\Cov{\mathrm{Cov}}
\def\E{\mathbb{E}}
\def\R{\mathbb{R}}
\def\P{\mathbb{P}}
\def\Z{\mathbb{Z}}
\def\N{\mathbb{N}}
\def\eps{\varepsilon}
\def\cP{\mathcal{P}}
\def\cF{\mathcal {F}}
\def\1{\mathbf{1}}
\def\lam {\lambda}
\def\Lam{\Lambda}
\def\tce{t_c + \eps}
\def\tce2{t_c + \frac{\eps}{2}}
\newcommand{\bbh}{\widehat{\bbeta}}
\newcommand{\blt}{\widetilde{\blambda}}
\newcommand{\bbt}{\widetilde{\bbeta}}
\newcommand{\beps}{{\bm \eps}}
\def\bX{\mathbf{X}}
\newcommand{\one}{\mathbf{1}}
\newcommand{\bt}{\mathbf{t}}
\newcommand{\bx}{\mathbf{x}}
\newcommand{\ba}{\mathbf{a}}
\newcommand{\bN}{\mathbf{N}}
\newcommand{\bB}{\mathbf{B}}
\newcommand{\bq}{\mathbf{q}}
\newcommand{\bbeta}{\bm{\beta}}
\newcommand{\balpha}{\bm{\alpha}}
\newcommand{\blambda}{\bm{\lambda}}
\newcommand{\NT}{\mathsf{NT}}
\newcommand{\QQ}{\mathcal{Q}}
\newcommand{\bsg}{\boldsymbol{\gamma}}
\newtheorem*{theorem*}{Theorem}
\newtheorem{theorem}{Theorem}
\newtheorem{lemma}[theorem]{Lemma}
\newtheorem{cor}[theorem]{Corollary}
\newtheorem{defn}[theorem]{Definition}
\newtheorem*{defn*}{Definition}
\newtheorem*{prop*}{Proposition}
\newtheorem*{conj*}{Conjecture}
\newtheorem{question}{Question}
\newtheorem*{fact*}{Fact}
\newtheorem{fact}[theorem]{Fact}
\newtheorem{assumption}{Assumption}
\newtheorem{example}{Example}
\theoremstyle{remark}
\newtheorem*{remark*}{Remark}
\newcommand{\jmin}{j_\ast}
\begin{document}
	\title{Maximum entropy and integer partitions}
	\author{Gweneth McKinley}
	\address{Department of Mathematics \\University of California San Diego}
	\email{gmckinley@ucsd.edu}
	\author{Marcus Michelen}
	\address{Department of Mathematics, Statistics, and Computer Science \\University of Illinois at Chicago}
	\email{michelen.math@gmail.com}
	\author{Will Perkins}
	\address{Department of Mathematics, Statistics, and Computer Science \\University of Illinois at Chicago}
	\email{math@willperkins.org}
	\date{\today}

	\begin{abstract}
		We derive asymptotic formulas for the number of integer partitions with given sums of $j$th powers of the parts for $j$ belonging to a finite, non-empty set $J \subset \mathbb N$.  The method we use is based on the `principle of maximum entropy' of Jaynes.  This principle leads to an  intuitive variational formula for the asymptotics of the logarithm of the number of constrained partitions as the solution to a convex optimization problem over real-valued functions. 
	\end{abstract}
	
	\maketitle

	\section{Introduction}
	\label{secIntro}
	
	An integer partition is a finite multiset $\lam$ of positive integers. It is a partition of $n$ if $\sum_{x \in \lam} x = n$. The partition number $p(n)$ counts the number of different partitions of $n$.  A classical result of Hardy and Ramanujan~\cite{hardy1918asymptotic}, obtained using Euler's generating function and the Hardy-Littlewood circle method, gives the asymptotics of $p(n)$:
	\begin{equation}
	\label{eqHR}
	p(n) = \frac{1+o(1)}{4 \sqrt{3} n }   e^{ \pi \sqrt{\frac 2 3}  \sqrt n} 
	\end{equation}
	as $n \to \infty$. 
	Since then, partitions and partition numbers have been extensively studied, and analytic, probabilistic, and combinatorial methods for analyzing partition numbers have been developed and refined.

	Several shorter or more elementary proofs of the Hardy--Ramanujan formula have since been given~\cite{erdos1942elementary,newman1962simplified}, but one can ask for an intuitive explanation of the formula; in particular, why is  the exponent  $\pi \sqrt{\frac 2 3}  \sqrt n$?
	
	We  give such an explanation here by following Jaynes' principle of maximum entropy~\cite{jaynes1957information}.  Following this principle will also allow us to determine the asymptotics of a very general class of partition numbers, those obtained by specifying  sums of various powers of the parts.  
	
 Jaynes' principle of maximum entropy is a kind of axiom about probabilistic inference: given some measurements of observed data, the best estimate for the generating distribution, in the sense of making the fewest additional assumptions,  is the distribution of maximum entropy consistent with these measurements.  More concretely, given the values of one or more statistics, the best estimate for the unknown distribution generating the data is the distribution of maximum entropy whose expectations match the observed statistics.  Jaynes explained how this principle gives an alternate derivation of the probabilistic models that arise in statistical mechanics. 
	
	To explain the application of the principle of maximum entropy to enumerating integer partitions, we will begin with the classical case $p(n)$.  Jaynes' principle suggests that to understand a typical partition of $n$, one should consider probability distributions on the  countably infinite  set of all integer partitions, and in particular, the unique probability distribution on partitions with mean sum $n$ that has the greatest entropy.   This maximum entropy distribution $\mu$ will turn out to have some remarkable properties that will help us approximate $p(n)$. 
	
	The first useful property of maximum entropy distributions is that there is an exact formula for $p(n)$ in terms of  $\mu$.  Let $\cP(n)$ denote the set of partitions of $n$.  Then
	\begin{equation}
	\label{eqpnMu}
	p(n) = e^{H(\mu)} \mu ( \cP(n)), 
	\end{equation}
	where $H(\mu) = - \sum_x \mu(x) \log \mu(x)$ is the Shannon entropy of $\mu$ and $ \mu ( \cP(n))$ is the probability that a partition drawn according to $\mu$ is a partition of $n$.  
	
	A similar formula appears in the work of Barvinok and Hartigan~\cite{barvinok2010maximum} in the context of counting integer points in polytopes (see also~\cite{barvinok2013number} and the survey~\cite{barvinok2017counting}).   Their main idea is this: to count the number of integer points in an affine subspace $A \subset \R^d$,  $| A \cap \mathbb Z^d|$, following Jaynes' principle, they construct the maximum entropy distribution $\nu$ on $\mathbb Z^d$ so that the expectation of $\nu$ lies in $A$.  They show that
	\begin{equation}
	\label{eqBarvHart}
	| A \cap \mathbb Z^d| = e^{H(\nu) } \cdot \nu(A )  \,,
	\end{equation}
 as in~\eqref{eqpnMu}.
	Thus the problem of estimating the size of the set is reduced to computing the entropy of $\nu$ and estimating $\nu(A)$, which can be done by proving a local central limit theorem under some conditions on the form of $A$.   
	
The formula~\eqref{eqpnMu} is a consequence of much more general fact about maximum entropy distributions (given in Lemmas~\ref{lemEntropyLemma} and~\ref{lemEntropyLemma2} below), itself a generalization of the fact that the (unconstrained) maximum entropy distribution on any finite set $S$ is the uniform distribution and its entropy is $\log |S|$.   
	
	The second useful property of constrained maximum entropy distributions is that they can be determined via convex programming.  This property has been used to great effect in several recent results in theoretical computer science~\cite{singh2014entropy,asadpour2017log,anari2018log} and is also used in~\cite{barvinok2010maximum}.   In the case of integer partitions, the description of $\mu$ is explicit.   We now sketch a derivation of this distribution, following a similar route to~\cite[Section 2.1]{barvinok2017counting}.  A probability distribution on partitions is a joint distribution of non-negative integer-valued random variables indexed by the natural numbers.  The constraint is that the sum of the means of these distributions times their indices equals $n$; that is, we require $\sum_{k \ge 1 } k \eta_k = n$ where $\eta_k$ is the expected number of parts of size $k$.  Since entropy is maximized by a product measure, and a geometric random variable has the greatest entropy of any non-negative integer-valued random variable with a given mean, the maximizing distribution must be a collection of independent geometric random variables.  The entropy of a geometric random variable with mean $\eta$ is 
	$$G(\eta) :=(\eta+1) \log(\eta+1) - \eta \log \eta \,,$$
	 and  so the means of these random variables are the values  $ \{ \eta_k \} _{k \ge 1}$ that maximize $ \sum_{k \ge 1} G(\eta_k)$ subject to the constraint  $\sum_{k \ge 1} k \eta_k = n$. 
	
	Distributions on partitions with independent coordinates have often arisen in the study of the structure of typical integer partitions. Indeed Fristedt identified the distribution $\mu$ above from the form of the generating function of $p(n)$~\cite{fristedt1993structure}, though he did not connect it with maximum entropy.  Vershik~\cite{vershik1996statistical,vershik1997limit} and Vershik and Yakubovich~\cite{vershik2001limit}, in the context of finding limiting shapes of partitions, considered related distributions and noted that they can be interpreted as grand canonical distributions from statistical physics.  See also~\cite{dembo1998large} in which large deviations for limit shapes are approached via the same type of distribution.   Melczer, Panova, and Pemantle~\cite{melczer2018counting} noted that while such distributions have been commonly  used  to determine limit shapes, they have only rarely been used to prove asymptotic enumeration results (their results and Tak{\'a}cs~\cite{takacs1986some} being the exceptions).  The identities~\eqref{eqpnMu} and~\eqref{eqPNmu} below provide a direct and very general link between enumeration and probability distributions on partitions.

	Given~\eqref{eqpnMu} and this representation of $\mu$ in terms of a discrete optimization problem, there are two steps to determine the asymptotics of $p(n)$: compute an accurate approximation of $H(\mu)$ and compute an accurate approximation of $\mu(\cP(n))$.

	To do  the first, we scale by $\sqrt n$ and approximate a Riemann sum by an integral  to obtain the following continuous convex optimization problem over real-valued functions:
	\begin{align}
	\label{eqHRintegral}
	M&=  \max_{f} \int_0^{\infty} G(f(x)) \, dx  \\
	\nonumber
	\text{subject to} & \quad  \int_{0}^\infty x f(x) \, dx = 1 \,,
	\end{align}
	over all integrable functions $f: [0,\infty) \to [0,\infty)$.  The optimizer $f^*(x) = \frac{1}{e^{ \frac{\pi}{\sqrt 6} x  }  -1 }$ can be found using  Lagrange multipliers.  This yields $M = \pi \sqrt{\frac 2 3}$, the constant in the exponent of the Hardy--Ramanujan formula.   To go back to the discrete problem we can take $\eta_k \approx f( k /\sqrt{n})$, and   the error in approximating the discrete optimization problem by the continuous problem can be estimated using the  Euler--Maclaurin formula, giving an additional factor $(24n)^{-1/4}$.

	For the second step, we estimate the  probability  $\mu(\cP(n))$  using a local central limit theorem, a common step in many approaches~\cite{takacs1986some,fristedt1993structure,pittel1997likely,canfield2001random,romik2005partitions,melczer2018counting}.  This gives an additional factor of $(  96 n^3)^{-1/4} $.  Multiplying $\exp \left ( \sqrt n \pi \sqrt{\frac 2 3} \right)$, $(24n)^{-1/4}$, and $(  96 n^3)^{-1/4} $  yields~\eqref{eqHR}, the Hardy--Ramanujan formula.  
	
The calculations outlined above in determining the asymptotics of $p(n)$ are not new: the extraction of the constant $\pi \sqrt{\frac{2}{3}}$ via Lagrange multipliers follows a similar path to analyzing the partition generating function using the saddle point method; the use of both the Euler--Maclaurin formula and a local central limit theorem appear in several works.  The main conceptual contribution of our perspective on the classical problem is to show that these calculations arise naturally and intuitively in the maximum entropy framework.  The exact formula~\eqref{eqpnMu} and the variational formula~\eqref{eqHRintegral} for the exponential growth rate are the two main tools that result from this perspective.

	To illustrate the utility of the maximum entropy approach we  prove asymptotic formulas for the number of partitions of a very general type: those that prescribe the sum of the $j$th powers of the parts of the partition for $j$ belonging to some finite set of non-negative integers $J$; the classical Hardy-Ramanujan case is $J = \{1 \}$, although in the same paper Hardy and Ramanujan \cite{hardy1918asymptotic} stated asymptotics for partitions of $n$ into $k$th powers for each fixed $k$ (i.e.\ the case of $J = \{k\}$) which were proven rigorously by Wright in 1934 \cite{wright1934asymptotic}. 
	
	 As above, we  give  an exact formula for the number of such partitions in terms of a constrained maximum entropy distribution and a formula for the exponential growth rate as the solution to a continuous convex optimization program.  However, when multiple sums of powers are constrained, several new wrinkles to the problem arise.  These include potential infeasibility of the constraints (related to the Stieltjes moment problem) and non-existence of a maximum entropy distribution, a well studied problem in optimization and information theory. See Section~\ref{secAssumption} below for a discussion of these issues and some new questions in the theory of integer partitions that they raise.

	Our proofs of the asymptotic formulas for moment-constrained integer partitions have three main steps, corresponding to the three factors that yield the formula~\eqref{eqHR} above in the classical case.  The first is a rigorous justification of the principle of maximum entropy to counting problems which  yields an exact formula for a partition number in terms of the maximum entropy distribution on partitions satisfying a collection of expectation constraints.  The next step is computing the asymptotics of the exponential of the entropy of this distribution by solving a continuous convex optimization problem and bounding the approximation error of a Riemann sum by an integral. The final step is  approximating the  probability that the maximum entropy distribution yields a partition satisfying all constraints by proving a multivariate local central limit theorem.  The generality of the types of partitions we enumerate necessitates some new technical ideas here.  As with many local central limit theorems, we write a probability as an integral.  The elimination of so-called \emph{minor arcs}---i.e.\ the portion of the integral that contributes an essentially negligible amount---requires a quantitative equidistribution result of Green and Tao \cite{green2012quantitative}.

	\subsection{Main results}
	
	We now describe the class of partitions we will enumerate.  Let $J$ be a finite set of non-negative integers containing at least one positive integer, and let $\mathbf N = (\bN_j)_{j \in J}$ be a vector of positive integers indexed by $J$.  A partition $\lam$ has \emph{profile} $\mathbf N$ if 
	$$\sum_{x \in \lam} x^j = \bN_j \qquad \text{ for all } j \in J.$$
	We call $J$ the \emph{profile set}.

	Let $\cP(\bN)$ denote the set of partitions with profile $\bN$ and let $p(\mathbf{N}) = | \cP (\mathbf N)|$.  For instance, with $J = \{1\}$ and $\bN_1 =n$, we have $p(\mathbf{N})=p(n)$, the usual partition number.  To study the asymptotics of $p(\mathbf{N})$ we normalize the profile.  For $\balpha \in \R_+^J$ and $n \in \N$, let 
	\begin{align*}
	\bN(\balpha, n) &= ( \lfloor \balpha_j n^{(j+1)/2} \rfloor)_{j\in J} \,.
\end{align*}
Then let  $\tilde p_n(\balpha) = p (\bN(\balpha, n))$.   We will study the asymptotics of $\tilde p_n(\balpha)$ as $n \to \infty$.   The scaling is chosen to obtain a non-trivial limit shape, which we discuss further in Section~\ref{secLimitShapes}.  

This general class of partition problems includes several specific cases studied previously:
\begin{itemize}
\item  The classical case, partitions of $n$, is obtained by taking $J= \{1\}$, $\balpha_1=1$.
\item Partitions of an integer into sums of $k$th powers~\cite{wright1934asymptotic} is obtained by taking $J= \{k\}$, $\balpha_k=1$.
\item Partitions of $n$ with a given number of parts~\cite{szekeres1953some,canfield1997recursions,romik2005partitions} is obtained by taking $J = \{0,1\}$, $\balpha_0=c$, $\balpha_1=1$. 
\end{itemize}
While these cases are all covered by our main result, several new features of the problem emerge once the set  $J$ includes more than one positive integer, and to the best of our knowledge such cases have not been considered before.  One new feature is that  certain profiles are impossible, either for number-theoretic reasons or because the values in $\balpha$ are incompatible.  The number-theoretic constraints pose some new challenges in proving the local central limit theorem (Section~\ref{secCLT}).  Additionally, in this case the continuous convex program analogous to~\eqref{eqHRintegral} may not have a solution even when it is feasible -- this is closely related to the problem of the existence of maximum entropy distributions, which has a long history in the study of infinite-dimensional convex optimization.  We discuss each of these features in what follows, starting with the constraints on profiles.  

As just mentioned, some profiles are impossible due to the incompatibility of the constraints; for example, the constraints may violate the Cauchy-Schwartz inequality.  The compatibility of constraints depends on the vector $\balpha$ and can be expressed in terms of the Stieltjes moment problem~\cite{stieltjes1894recherches}; this is discussed further in Section~\ref{secAssumption}.
	
	Other profiles are impossible for number-theoretic reasons that depend on $n$.  For instance, since $k^2 \equiv k \mod 2$ for all integers $k$, we have that $p(\bN) = 0$ if $\bN_1 \not\equiv \bN_2 \mod 2$.  A concise way of describing this particular constraint is that the polynomial $\frac{1}{2}x^2 + \frac{1}{2}x$ is \emph{integer-valued} meaning that $ \frac{1}{2}m^2 + \frac{1}{2}m \in \Z$ for all $m \in \Z$.  Thus a necessary condition for  $\cP(\mathbf{N}) \neq \emptyset$ is that $\frac{1}{2}\bN_2 + \frac{1}{2} \bN_1 \in \Z$.   It will turn out that \emph{all} number-theoretic obstructions can be defined in this way.  
	Let
	\begin{equation} 
	\label{eqQJ}
	\QQ_J = \left\{ \sum_{j \in J} t_j x^j:\ t_j \in (-1/2,1/2] \text{ and } \sum_{j \in J} t_j m^j \in \Z \text { for all }m \in \Z  \right\}
	\end{equation}
 be the set of integer-valued polynomials using only powers in $J$ and having coefficients in $(-1/2,1/2]$.  We subsequently define the set $$\NT := \NT(J) = \left\{ (m_j)_{j \in J} \in \Z^{J} : a_j m_j \in \Z \text{ for all } \sum_{j \in J} a_j x^j \in \QQ_J \right\}\,.$$
	It follows from the definition that if $\mathbf N \notin \NT$ then $p(\mathbf{N}) = 0$. We say $\balpha$ is $n$-feasible if $\bN( \balpha,n) \in \NT$.

	 To apply the principle of maximum entropy to $p (\mathbf N)$, we define $\mu$ to be the maximum entropy distribution on the set of all partitions so that 
	$$ \E_{\lam \sim \mu} \sum_{x \in \lam} x^j = \bN_j $$
	for all $j \in J$.   As in the special case above, we will see that the maximum entropy distribution can be represented as a collection of independent geometric random variables.  Under one assumption on $\balpha$, we will show, as in~\eqref{eqpnMu},  the exact formula
	\begin{equation}
	\label{eqPNmu}
	p(\mathbf N) = e^{H(\mu)} \mu( \cP(\mathbf N)) \,.
	\end{equation}	
	The assumption we make on $\balpha$ ensures feasibility of the moment constraints and facilitates the maximum entropy method.
	\begin{assumption}
	\label{Assumption1}
	There exists $\bbeta \in \{\R \setminus 0 \}^J$ so that
	\begin{equation}\label{eq:geo-cont}
	\int_0^\infty \frac{x^j}{\exp\left( \sum_{\ell \in J} \bbeta_\ell x^\ell \right) - 1}\,dx = \balpha_j \quad\text{for }j \in J \,.
	\end{equation}
	\end{assumption}
	In fact, for these integrals to converge, $ \bbeta$ must belong to a certain convex subset of $\R^J$: those vectors for which the polynomial $ \sum_{j \in J} \bbeta_j x^j$ is positive on $(0,\infty)$.  
We discuss Assumption~\ref{Assumption1} and its connections to other problems in optimization and information theory in Section~\ref{secAssumption}.

	Equipped with~\eqref{eqPNmu} and Assumption~\ref{Assumption1}, we  pose a continuous convex program that determines the exponential growth rate of $ \tilde p_n(\balpha)$ (in $\sqrt n$).  Recall that $G(\eta) = (\eta+1) \log(\eta+1) - \eta \log \eta$.  We define    
	\begin{align}
	\label{eqMbsb}
	&M(\balpha) = \max_{f \in \cF}  \int_{0}^\infty G(f(x)) \, dx \\
	\nonumber
	\text{subject to }  
	\quad &\int_0^\infty x^j f(x) \, dx = \balpha_j  \text{ for } j \in J \,,
	\end{align}
	where $\cF$ is the set of all integrable functions $f: [0,\infty) \to [0,\infty)$.  Note that the objective function to be maximized is strictly concave, and the constraints linear, so we have a convex program.   Assumption~\ref{Assumption1} implies this optimization problem is feasible; in fact via convex duality the vector $\bbeta$ is a certificate that  the optimizer is 
\begin{equation}
\label{eqfstar}
f^*(x) :=\frac{1}{   \exp\left( \sum_{j \in J} \bbeta_j x^j \right) - 1 } \,.
\end{equation}
 The optimum $M(\balpha)$ determines the growth rate of the  entropy of the maximum entropy distribution $\mu$ from~\eqref{eqPNmu}, and thus the growth rate of $\tilde p_n(\balpha)$.  We can now state our main result.

	\begin{theorem}
		\label{thmIntegralNonDistinct}
		For all profile sets $J$ and all $\balpha \in \R_+^J$ satisfying Assumption~\ref{Assumption1},
		\[  \tilde p_n(\balpha) = (c(\balpha)+o(1)) \cdot \frac{ e^{\sqrt n M(\balpha)} }{  n^{b(J)} } \]
		when $\balpha$ is $n$-feasible (and $0$ otherwise). 
	\end{theorem}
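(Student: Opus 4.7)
The plan is to execute the three-step program outlined in the introduction: (i) apply the exact identity~\eqref{eqPNmu}, (ii) determine the asymptotics of $H(\mu)$ via the convex program~\eqref{eqMbsb}, and (iii) prove a local central limit theorem for $\mu(\cP(\bN))$.  Write $\bN = \bN(\balpha,n)$ and let $\mu = \mu_n$ be the maximum entropy distribution with $\E_{\lam \sim \mu}\sum_{x\in \lam} x^j = \bN_j$.  By the derivation sketched in the introduction, $\mu$ is a product of independent geometrics $X_k \sim \geom(\eta_k^{(n)})$ with $\eta_k^{(n)} = (\exp(\sum_{j \in J}\bbeta_j^{(n)} k^j) - 1)^{-1}$, where $\bbeta^{(n)}$ is the Lagrange-multiplier vector for the finite-dimensional dual.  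Under the scaling $\bN_j \asymp n^{(j+1)/2}$, the natural ansatz $\bbeta_j^{(n)} = \bbeta_j\, n^{-j/2} + o(n^{-j/2})$ matches $\eta_k^{(n)}$ with $f^*(k/\sqrt n)$, where $f^*$ is the optimizer~\eqref{eqfstar}; strict concavity of the dual objective, together with Assumption~\ref{Assumption1}, provides the quantitative stability needed to control the correction.

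For the entropy, since $\mu$ is a product, $H(\mu) = \sum_{k\ge 1} G(\eta_k^{(n)})$, which is a Riemann sum of spacing $1/\sqrt n$ for $\int_0^\infty G(f^*(x))\,dx = M(\balpha)$.  An application of the Euler--Maclaurin formula (with care near $x = 0$, where $f^*$ blows up like $1/x$) yields $H(\mu) = \sqrt n\, M(\balpha) + c_1(\balpha) + o(1)$.  The constant $c_1$ combines with the prefactor of Step~3 to produce $c(\balpha)$ and a polynomial contribution to $n^{-b(J)}$.

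For the probability $\mu(\cP(\bN))$, use Fourier inversion on the lattice $\NT$ and the product structure of $\mu$ to write
\[ \mu(\cP(\bN)) = \frac{1}{\vol(T_J^*)}\int_{T_J^*} e^{-i \langle \bm\theta, \bN\rangle} \prod_{k \ge 1} \E\bigl[e^{i \sum_{j\in J} \theta_j k^j X_k}\bigr] \, d\bm\theta \, , \]
where $T_J^*$ is the torus dual to $\NT$.  Split the domain into a major arc (a ball of radius of order $n^{-3/4}$ about $\bm 0$) and its complement.  On the major arc a Taylor expansion linearizes the characteristic function to a negative-definite quadratic form, and the integral is well approximated by the Gaussian integral $(2\pi)^{|J|/2}/\sqrt{\det \Sigma_n}$, with $\Sigma_n = \Cov_\mu((Y_j)_{j\in J})$ and $Y_j = \sum_{x \in \lam} x^j$.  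Each entry of $\Sigma_n$ scales as a computable power of $n$ via another Euler--Maclaurin calculation, so after combining with the $|J|$-dimensional Gaussian prefactor and Step~2 one reads off the exponent $b(J)$ and constant $c(\balpha)$ claimed in the theorem.

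The main obstacle is the minor-arc estimate.  Because $X_k$ is integer-valued, its characteristic function at $\bm\theta$ has modulus close to $1$ whenever $\sum_{j\in J}\theta_j k^j$ is close to $2\pi\Z$, and for $|J| \ge 2$ this can happen for many $k$ simultaneously when $\bm\theta$ lies near a rational point corresponding to an integer-valued polynomial in $\QQ_J$.  By construction, passing to the dual of $\NT$ quotients out exactly those coincidences, so any $\bm\theta \in T_J^*$ bounded away from $\bm 0$ must yield genuine non-equidistribution of $\sum_{j \in J} \theta_j k^j \bmod 1$.  To make this quantitative, uniformly in $n$ and in $\bm\theta$ on the minor arc, I would apply the Green--Tao quantitative equidistribution theorem~\cite{green2012quantitative} to the family of polynomials $k \mapsto \sum_{j \in J} \theta_j k^j$, which bounds the corresponding exponential sums away from rational points of small height.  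This reduces the minor-arc contribution to a super-polynomially small remainder; combined with the vanishing statement when $\balpha$ is not $n$-feasible (built into the definition of $\NT$), this completes the argument.
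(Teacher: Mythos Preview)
Your three-step program is exactly the paper's: the identity $p(\bN)=e^{H(\mu_n)}\mu_n(\cP(\bN))$, an Euler--Maclaurin expansion of $H(\mu_n)$, and a local CLT via Fourier inversion with the Green--Tao equidistribution theorem handling the minor arcs. Performing the Fourier inversion on the torus dual to $\NT$ is a clean repackaging of what the paper does: the paper integrates over $(-1/2,1/2]^J$, finds $|\QQ_J|$ major arcs indexed by $\QQ_J$, and shows (Lemma~\ref{lem:combine-integrals}) that they contribute equally; your quotient by $\NT^*$ folds these into a single major arc at the origin, and the normalization $1/\vol(T_J^*)=|\QQ_J|$ reproduces the same factor.

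There is, however, one concrete slip. The expansion you write, $H(\mu)=\sqrt{n}\,M(\balpha)+c_1(\balpha)+o(1)$, is missing a logarithmic term. Near $x=0$ the optimizer behaves as $f^*(x)\sim(\bbeta_{\jmin}x^{\jmin})^{-1}$ (so the blow-up is $x^{-\jmin}$, not $x^{-1}$ in general), whence $G(f^*(x))\sim-\jmin\log x$, and the Euler--Maclaurin boundary correction at the left endpoint yields an additional $-\tfrac{\jmin}{4}\log n$ in $H(\mu_n)$ (this is Lemma~\ref{lemEntropyApprox}). This term is not cosmetic: the exponent $b(J)=\tfrac{\jmin+|J|}{4}+\tfrac12\sum_{j\in J}j$ decomposes as $\tfrac{\jmin}{4}$ coming from the entropy step plus $\sum_{j\in J}(\tfrac{j}{2}+\tfrac14)$ coming from $\det S$ in the LCLT step, so attributing the entire polynomial factor to Step~3 would leave you off by $n^{\jmin/4}$. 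A smaller imprecision: the major arc is not an isotropic ball of radius $n^{-3/4}$ but the anisotropic box $\{|t_j|\le\delta n^{-j/2}\}$, matching the scales $\bN_j\asymp n^{(j+1)/2}$.
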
 
	
	The constant $b(J)$ is given by
	\begin{align*}
	b(J) &= \frac{\jmin + |J|}{4} + \frac{1}{2}\sum_{j \in J} j   \, ,
	\end{align*}
	where $\jmin := \min  J$.  The constant $c(\balpha)$ depends on  $\balpha$ implicitly through the vector $\bbeta$  guaranteed by Assumption~\ref{Assumption1}  and is given by 
		\begin{align*} 
		c(\balpha) &= \frac{   |\QQ_J|   }{ (2\pi)^{ \frac{\jmin + |J|}{2}} ( \det {\Sigma}) ^{1/2} }  \cdot {\bbeta}_{\jmin}^{   \frac{\one_{\jmin \geq 1}}{2}   }   \cdot \exp \left ( \frac{\one_{\jmin = 0}}{2}\left(\frac{{\bbeta}_0}{e^{{\bbeta}_0} - 1} - G \left (\frac{1} { e^{{\bbeta}_0}-1} \right  ) \right)  \right)
		\end{align*}
	 where
	\begin{equation}
	\label{eqSigmaDef}
	{\Sigma} = \left(\int_0^\infty x^{i + j} \frac{\exp\left( \sum_{\ell \in J} \bbeta_\ell x^\ell \right)}{\left(\exp\left(\sum_{\ell \in J} \bbeta_\ell x^\ell \right) - 1\right)^2} \,dx\right)_{i,j \in J} 
	\end{equation}
	and $\QQ_J$ is defined in~\eqref{eqQJ}.

\subsection{Limit Shapes}
\label{secLimitShapes}
The scaling $\bN(\balpha,n) = (\lfloor \balpha_j n^{(j+1)/2} \rfloor)_{j \in J}$ is chosen so that a typical partition $\lambda$ in $\mathcal{P}(\bN)$ has a limit shape.  In particular, if we rescale the Young diagram of $\lambda \in \mathcal{P}(\bN)$ by $\sqrt{n}$ in each direction, then the area of the rescaled diagram will be of roughly constant order; indeed, in the case that $1 \in J$, the rescaled area will be exactly $\balpha_1$.  Informally, we say that there is a \emph{limit shape} if  the rescaled Young diagram of a uniformly random $\lam \in \mathcal{P}(\bN)$ converges in distribution (in an appropriate sense) to a constant shape.  In the classical case of $J = \{1\}$, a limit shape was shown to exist by Szalay and Tur\'an \cite{st-1,st-2} (see also \cite{vershik1996statistical}).  This shape is shown in Figure~\ref{figHRshape}.  Similarly, a limit shape for partitions whose Young diagram fit in a rectangle of constant aspect ratio was found by Petrov \cite{petrov}.  

\begin{figure}
\centering
\begin{minipage}{.5\textwidth}
  \centering
  \includegraphics[width=1\linewidth]{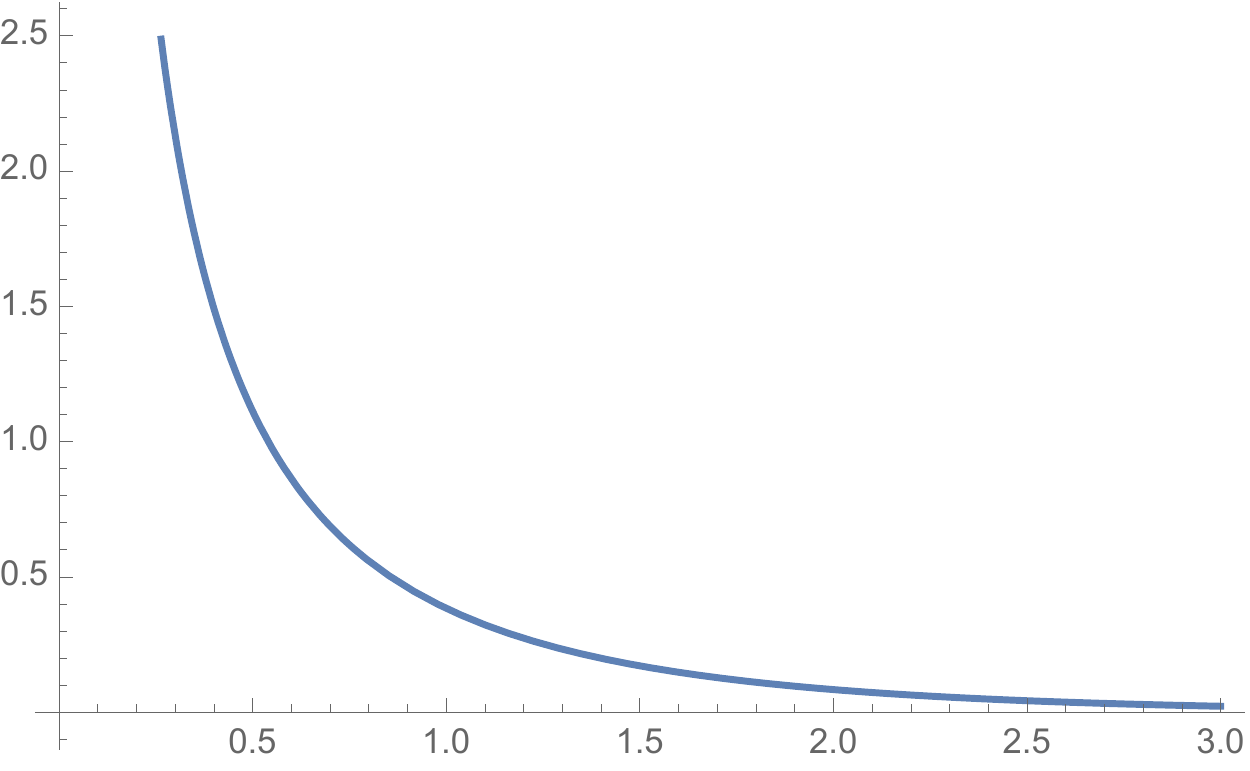}
  \captionof{figure}{The optimizer $f^* =  (e^{ \frac{\pi}{\sqrt 6} x  }  -1 )^{-1}$ for partitions of $n$.}
  \label{fig:fHR}
\end{minipage}%
\begin{minipage}{.5\textwidth}
  \centering
  \includegraphics[width=1\linewidth]{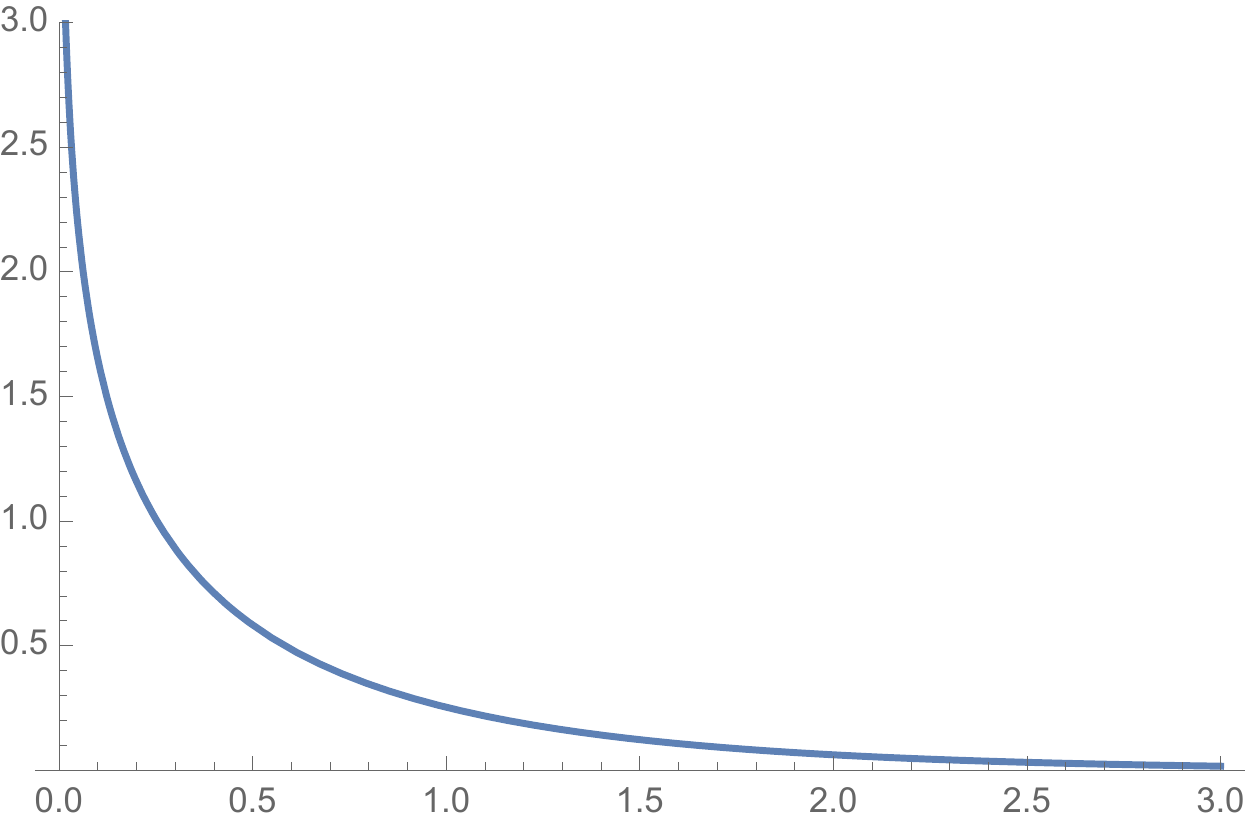}
  \captionof{figure}{The limit shape for partitions of $n$.}
  \label{figHRshape}
\end{minipage}
\end{figure}

We will show that there is a limit shape for all the cases covered by Theorem~\ref{thmIntegralNonDistinct}.  In order to state precisely what is meant by ``limit shape'' some preliminaries are required. Following Vershik \cite{vershik1996statistical}, define the space $\mathcal{D} = \{  \phi(t)) \}$ where $\phi: \R_+ \to [0,\infty) $ with $\int \phi(t)\,dt <\infty$ and $\phi$  non-increasing.  Endow $\mathcal{D}$ with the topology of uniform convergence on compact sets.  We will think of $\mathcal{D}$ as the space of scaled Young diagrams and their limits, where we consider Young diagrams in French notation.
For a partition $\lambda$ and $n \in \N$, define the function \begin{equation*}
\phi_{\lambda,n}(t) = n^{-1/2} \left| \{ a \in \lambda: a \geq t \sqrt{n} \}\right|\,.
\end{equation*}
The function $\phi_{\lambda,n}(t)$ is simply the boundary of the Young diagram of $\lambda$ in French notation, rescaled by $\sqrt{n}$ in each direction.  Our goal is to identify the limit shape when $\lambda$ is chosen from $\mathcal{P}(\bN)$ uniformly at randomly; intuitively, the law of large numbers states that if $\lambda$ is chosen from the maximum entropy measure $\mu_n$ instead, then we have 
\begin{equation}\label{eq:LLN}
n^{-1/2} \left| \{ a \in \lambda: a \geq x \sqrt{n} \}\right| \approx  \int_x^\infty  \frac{1}{\exp(\sum_{j \in J}\bbeta_j s^j ) - 1 }\,ds \,.
\end{equation}

With this in mind, the function $\phi_\infty(t)$ defined via $$\phi_\infty(t) =  \int_{t}^\infty \frac{1}{\exp(\sum_{j \in J}\bbeta_j s^j ) - 1 }\,ds = \int_t^\infty f^*(s)\,ds ,$$
where $f^*$ is as in~\eqref{eqfstar},
is a strong candidate for the limit shape.  This will turn out to be the case.

\begin{theorem}\label{thm:shape}
	In the context of Theorem~\ref{thmIntegralNonDistinct}, let $\lambda$ be an element of $\mathcal{P}(\bN(\balpha,n))$ chosen uniformly at random.  Then $\phi_{\lambda,n}$ converges in distribution to $\phi_\infty$ as $n \to \infty$.
\end{theorem}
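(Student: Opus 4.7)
The plan is to work with the maximum entropy measure $\mu_n$ on all integer partitions whose $j$th power sums have mean $\bN(\balpha,n)_j$ for $j \in J$. Because $\mu_n(\lambda) \propto \exp\bigl(-\sum_{j \in J}\bbeta^{(n)}_j \sum_{x \in \lambda} x^j\bigr)$ depends on $\lambda$ only through its power sums, $\mu_n$ restricted to $\cP(\bN)$ is the uniform measure on $\cP(\bN)$. Hence the uniform law on $\cP(\bN)$ is $\mu_n(\,\cdot \mid \cP(\bN))$, and by~\eqref{eqPNmu} together with the entropy and partition-count estimates obtained in the proof of Theorem~\ref{thmIntegralNonDistinct}, one has $\mu_n(\cP(\bN)) = \Theta(n^{-b(J)})$. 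Consequently any event of $\mu_n$-probability $o(n^{-b(J)})$ has uniform probability $o(1)$, so it will suffice to establish stretched-exponentially small tail bounds for $\phi_{\lambda,n}$ under $\mu_n$.

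Next I would prove a sharp law of large numbers for $\phi_{\lambda,n}(t)$ under $\mu_n$. Under $\mu_n$ the multiplicities $c_k$ of parts of size $k$ are independent geometric random variables with means $\eta_k = \bigl(\exp\bigl(\sum_{j \in J}\bbeta^{(n)}_j k^j\bigr)-1\bigr)^{-1}$, and the dual parameters $\bbeta^{(n)}_j$ are asymptotic to $\bbeta_j/n^{j/2}$, so that $\eta_k \approx f^*(k/\sqrt n)$ with $f^*$ given by~\eqref{eqfstar}. For fixed $t > 0$,
\[
\phi_{\lambda,n}(t) \;=\; \frac{1}{\sqrt n}\sum_{k \geq t\sqrt n} c_k,
\]
whose mean is a Riemann sum converging to $\int_t^\infty f^*(s)\,ds = \phi_\infty(t)$. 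Because $f^*$ decays exponentially at infinity, $\sup_{k \geq t\sqrt n}\eta_k = O(1)$ and $\sum_{k \geq t\sqrt n}\eta_k(1+\eta_k) = O(\sqrt n)$, so Bernstein's inequality for sums of independent centered geometrics (whose MGFs are bounded in a neighborhood of $0$) yields
\[
\mu_n\bigl(|\phi_{\lambda,n}(t) - \phi_\infty(t)| > \eps\bigr) \;\leq\; 2\exp\bigl(-c_{\eps,t}\sqrt n\bigr)
\]
for all large $n$. By the first paragraph this transfers to $\phi_{\lambda,n}(t) \to \phi_\infty(t)$ in probability under the uniform measure on $\cP(\bN)$.

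Finally, I would upgrade pointwise to uniform convergence on compacts of $(0,\infty)$ by a Pólya-type argument: for compact $K \subset (0,\infty)$ and $\eps > 0$, choose a finite mesh $t_0 < \cdots < t_m$ covering $K$ with $\phi_\infty(t_{i-1}) - \phi_\infty(t_i) < \eps/3$, which is possible since $\phi_\infty$ is continuous and non-increasing. Monotonicity of $\phi_{\lambda,n}$ then forces $\sup_{t \in K}|\phi_{\lambda,n}(t) - \phi_\infty(t)| \leq \eps$ whenever $\max_i |\phi_{\lambda,n}(t_i) - \phi_\infty(t_i)| \leq \eps/3$, and a union bound over the mesh completes the argument. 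Since $\phi_\infty$ is deterministic, convergence in probability in this topology is convergence in distribution in $\cD$. The main technical point is to make the concentration bound strong enough to beat the polynomial factor $n^{-b(J)}$; this requires the Bernstein constants and the correction between $\bbeta^{(n)}$ and $\bbeta/n^{j/2}$ to be uniform for $t$ in a compact subset of $(0,\infty)$, both of which follow from the quantitative control on $\bbeta^{(n)}$ used in the proof of Theorem~\ref{thmIntegralNonDistinct}.
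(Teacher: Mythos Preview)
Your proposal is correct and follows essentially the same route as the paper: establish exponential-in-$\sqrt n$ concentration of $\phi_{\lambda,n}(t)$ around $\phi_\infty(t)$ under the product-geometric measure $\mu_n$, then transfer to the uniform measure on $\cP(\bN)$ via the polynomial lower bound $\mu_n(\cP(\bN))=\Theta(n^{-b(J)})$. The only cosmetic difference is in the uniformization step: the paper unions over the $O(\sqrt n)$ points $t$ with $t\sqrt n\in\Z$ (where $\phi_{\lambda,n}$ can jump), whereas you use a fixed finite Pólya mesh and monotonicity---both work equally well here.
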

	
Note that the value $M(\balpha)$ may be viewed as a functional of the limit shape $\phi_\infty$ itself, since $f^*$ can be obtained by differentiation.  The relationship between the growth rate of $\exp(M(\balpha)\sqrt{n})$ and the limit shape $\phi_\infty$ is not new and has a long history in statistical mechanics and its adjacent fields.  For instance, in a survey on the limit shapes, Shlosman shows that the asymptotic $\log p(n) \sim \pi \sqrt{2/3} \sqrt{n}$ follows from the shape theorem for partitions~\cite{shlosman}.  The survey \cite{okounkov} by Okounkov discusses many other examples of relationships---both heuristic and rigorous---between limit shapes, asymptotic enumeration and large deviation principles.

We now give two examples of limit shapes obtainable in Theorem~\ref{thm:shape}. These examples were obtained by choosing $\bbeta$ first then calculating the corresponding $\balpha$.  
\begin{example}
\label{exShape3}
Let $J= \{0,1,2,3, 4\}$ and let $\balpha =  \{12.8748, 6.698, 4.66192, 3.72617, 3.15877 \}$.  Then $\bbeta \approx \{.95, -10.1, 36.5, -49.5, 22.4   \}$.  The limit shape is given in Figure~\ref{fig:shape3}. 
\end{example}

\begin{figure}
\centering
\begin{minipage}{.5\textwidth}
  \centering
  \includegraphics[width=1\linewidth]{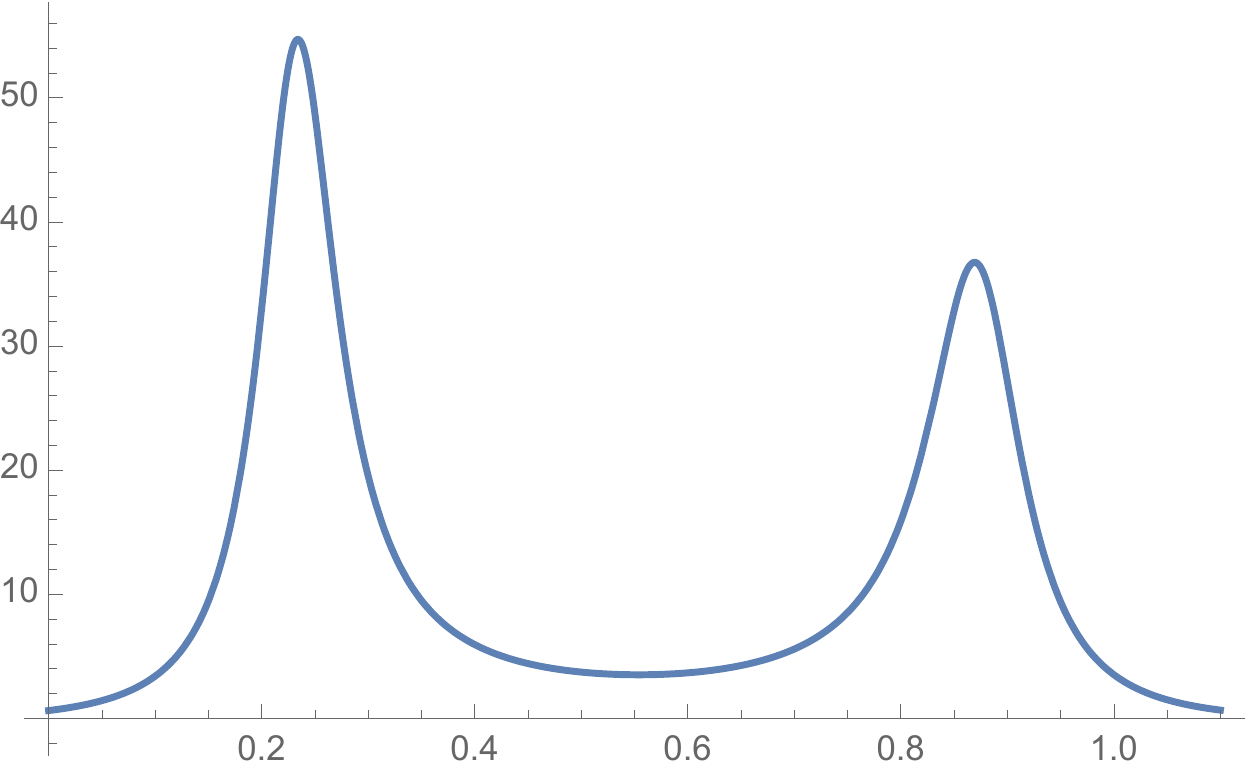}
  \captionof{figure}{The optimizer $f^*$ for Example~\ref{exShape3}.}
  \label{fig:f3}
\end{minipage}%
\begin{minipage}{.5\textwidth}
  \centering
  \includegraphics[width=1\linewidth]{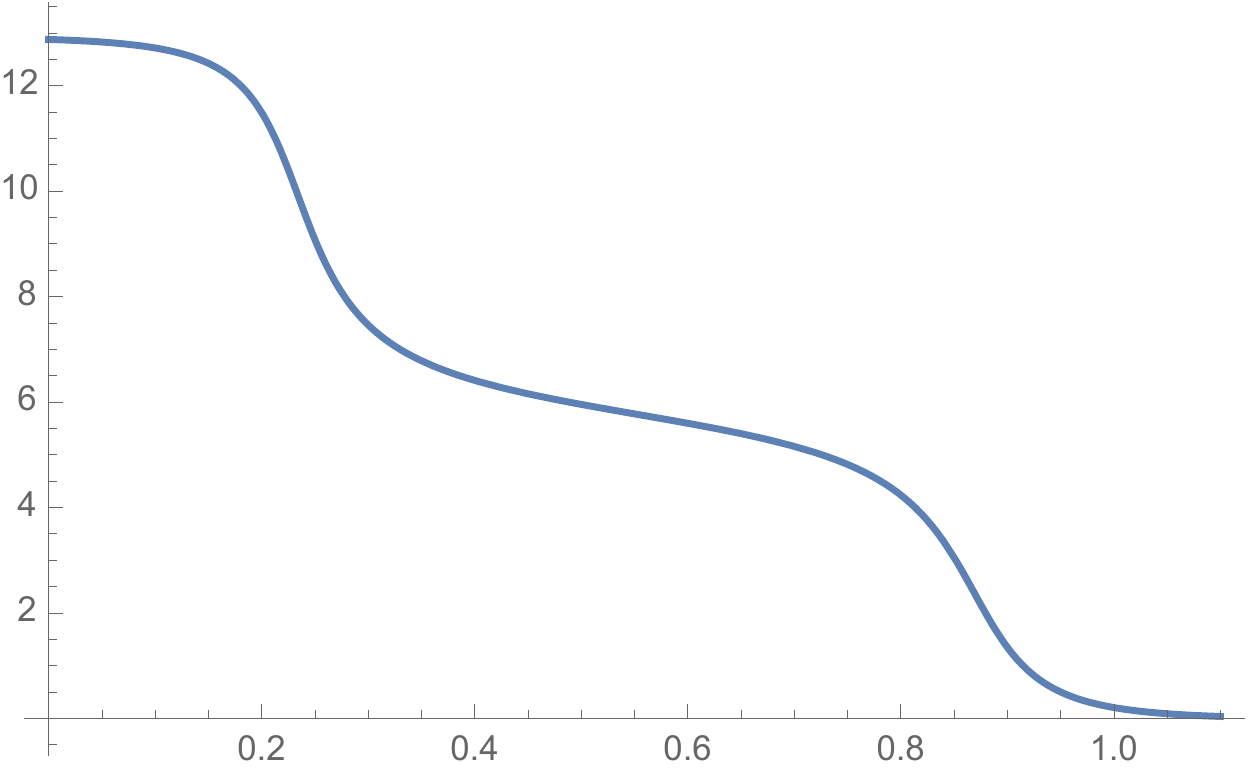}
  \captionof{figure}{The limit shape for Example~\ref{exShape3}.}
  \label{fig:shape3}
\end{minipage}
\end{figure}

\begin{example}
\label{exShape4}
Let $J= \{1,2,3\}$ and let $\balpha =  \{4.31168, 3.86652, 3.65774 \}$.  Then $\bbeta \approx \{4.0, -8.5, 4.6 \}$.  The optimizer $f^*$ is shown in Figure~\ref{fig:f4} and the limit shape is show in Figure~\ref{fig:shape4}.   Note that both have a vertical asymptote at $0$ indicating that the typical number of parts of such a partition is $ \omega(\sqrt{n})$. 
\end{example}

\begin{figure}
\centering
\begin{minipage}{.5\textwidth}
  \centering
  \includegraphics[width=1\linewidth]{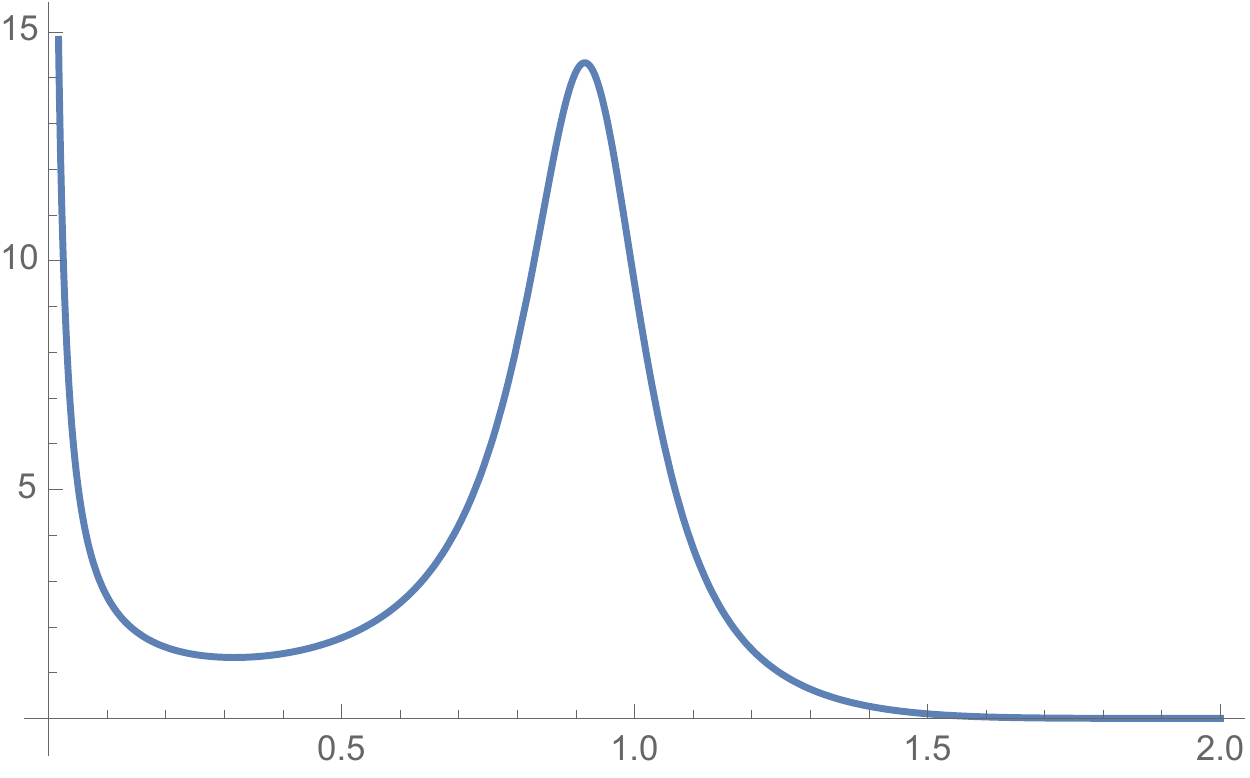}
  \captionof{figure}{The optimizer $f^*$ for Example~\ref{exShape4}.}
  \label{fig:f4}
\end{minipage}%
\begin{minipage}{.5\textwidth}
  \centering
  \includegraphics[width=1\linewidth]{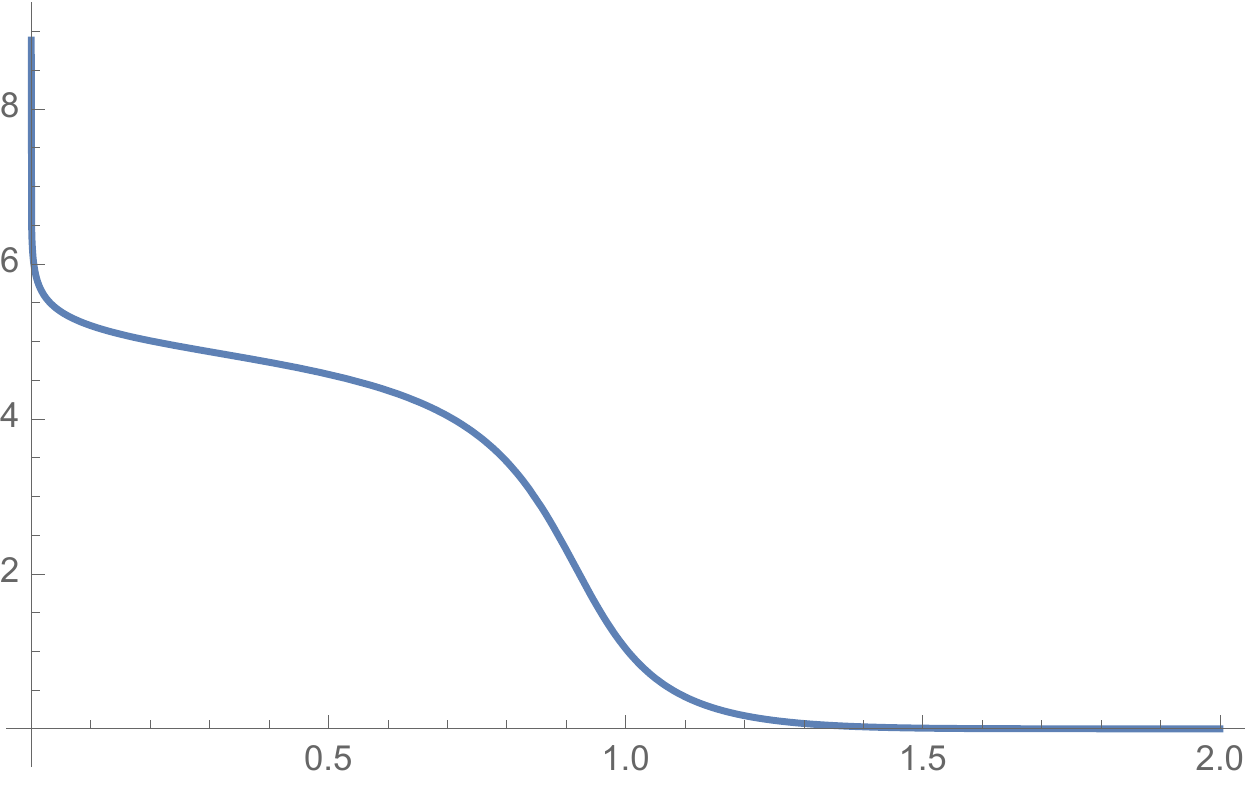}
  \captionof{figure}{The limit shape for Example~\ref{exShape4}.}
  \label{fig:shape4}
\end{minipage}
\end{figure}

These examples indicate some of the  rich behavior possible in the setting of moment-constrained integer partitions. By specifying $k$ moments we can obtain a limit shape with up to $k-1$ inflection points.  In fact, it is not hard to show that the set of limit shapes obtainable in the framework of Theorem~\ref{thmIntegralNonDistinct} is dense in the set of all integrable, non-negative and non-increasing functions on $[0, \infty)$.  
	\subsection{Remarks on Assumption~\ref{Assumption1}}
	\label{secAssumption}
	
	The Stieltjes moment problem~\cite{stieltjes1894recherches} is the problem of finding a density function of a continuous random variable supported on $[0,\infty)$ given its  
	moments.  In other words, given a sequence of positive numbers $\balpha_1, \balpha_2, \dots $, determine  if there is a density  $f$ on $[0,\infty)$ with $\int x^k f(x) \, dx = \balpha_k$, $k \ge1$, and if so, whether it is uniquely determined.  The truncated or reduced Stieltjes moment problem is the same but with only the first $d$ moments specified.   The closely related Hausdorff and Hamburger moment problems are the analogous problems with support $[0,1]$ and $(-\infty, \infty)$ respectively.  Stieltjes determined necessary and sufficient conditions on the sequence $\balpha_1, \balpha_2, \dots$ for the existence of a solution to the moment problem.  Let $A_n$ and $\overline A_n$ be the Hankel matrices generated from $1 = \balpha_0, \balpha_1, \dots, \balpha_n $ and $\balpha_1, \balpha_2, \dots, \balpha_n $ respectively.  Then the Stieltjes moment problem has a solution if and only if $ \det A_n > 0$ and $\det \overline A_n > 0$ for all $n$.   The conditions for the truncated Stieltjes problem are the same but only constraining the Hankel matrices formed using the specified moments.
	
	When the (truncated) moment problem is feasible there may be infinitely many solutions, and so one can ask for a principled approach to select one distribution satisfying the given constraints.  Jaynes' principle of maximum entropy suggests choosing the distribution with maximum entropy subject to the constraints.   Such a choice is very natural: many widely used distributions, both discrete and continuous, are maximum entropy distributions subject to constraints on the support and a small number of moments, e.g.\ Gaussian, exponential, geometric, and uniform (see e.g.~\cite[Chapter 12]{cover1999elements}). 

One can pose the maximum entropy Stieltjes moment problem as a continuous convex program.  	
\begin{align}
	\label{maxEntMoments}
	&\max_{f\in\cF}   -\int_{0}^\infty f(x) \log f(x) \, dx \\
	\nonumber
	&\text{subject to }  
	\quad \int_0^\infty x^j f(x) \, dx = \balpha_j \text{ for } j =0, \dots d \,,
	\end{align}
	  where we set $\balpha_0 =1$ to ensure that $f$ is a probability density function.  The objective function is the entropy of the distribution with density $f$.   Note the similarity of the maximum entropy moment program~\eqref{maxEntMoments} to the program~\eqref{eqMbsb}, which we will call the \textit{maximum geometric entropy moment problem}.   The only difference is in the objective functions which are different strictly concave functions (also in~\eqref{eqMbsb} we may only specify a subset of the first $m$ moments, but we could do the same in~\eqref{maxEntMoments}).  The two problems share essentially all of their qualitative features.    To describe these features, let us assume for now that $J = \{ 0, \dots , d\}$ and that $\balpha_0 =1$ (the latter is simply a normalization).  Then the feasible sets of~\eqref{eqMbsb} and~\eqref{maxEntMoments} are identical and non-empty if and only if the Stieltjes condition holds.   Csisz{\'a}r~\cite{csiszar1975divergence} shows that if a maximum entropy solution to~\eqref{maxEntMoments} exists then it must be of the form $f(x) = \exp( - \sum_{j=0}^d \bbeta_j x^j )$ for some $\bbeta \in \R^{d+1}$.   Moreover, from any solution $\bbeta$ to the system of equations 
\begin{equation}
\label{eqEntropySystem1}
\int_0^\infty x^j    \exp \left(- \sum_{\ell=0}^d \bbeta_\ell x^\ell   \right)  \, dx = \balpha_j \text{ for } j =0, \dots d 
\end{equation}
we can generate an optimal solution to~\eqref{maxEntMoments} via $f(x) =  \exp \left(- \sum_{j=0}^d \bbeta_j x^j   \right) $.   We will show below in Lemma~\ref{lemContinuousSolution} that from any solution $\bbeta$ to the system~\eqref{eq:geo-cont} we can also generate an optimal solution to~\eqref{eqMbsb}.

On the other hand, it can happen that $\balpha$ satisfies the Stieltjes condition for feasibility but no solution to~\eqref{eqEntropySystem1} exists~\cite{junk2000maximum,tagliani2003maximum}.  A simple example is $\balpha = (\balpha_0, \balpha_1, \balpha_2)= (1,1,3)$.   This set of moments is feasible for the truncated Stieltjes problem since the matrices $\{ \{1,1 \},\{ 1,3\} \}$ and $\{ \{1\}\}$ have positive determinants.   In this case there is a least upper bound to the maximization problem but it is not achieved by any density function.
	
Similarly, in the case of the maximum geometric entropy moment problem there are vectors $\balpha$ feasible for the Stieltjes moment problem that nevertheless do not satisfy Assumption~\ref{Assumption1} and thus do not have an optimal solution to~\eqref{eqMbsb}.  It is straightforward to generate such an $\balpha$.   First pick $\bbeta = (\bbeta_0, \bbeta_1, \dots \bbeta_d)$ so that $\sum_{j=0}^d x^j \bbeta_j$ is positive on $(0,\infty)$.   Let $\balpha'_j$, $j=0, \dots, d+1$ be given by
\begin{align*}
\balpha'_j &= \int_{0}^\infty \frac{x^j}{ \exp \left( \sum _{\ell=0}^d x^\ell \bbeta_\ell \right) -1} \,.
\end{align*}
Then let $\balpha_j = \balpha'_j$ for $j=1, \dots, d$ and $\balpha_{d+1} = \balpha_{d+1}' +\eps$ where $\eps>0$ is chosen small enough so that the corresponding Hankel matrices have positive determinant (this mirrors the construction in~\cite{junk2000maximum}).  The results in a feasible optimization problem without an optimal solution.  Such vectors $\balpha$ are not covered by our results and we ask if it is still possible to determine the asymptotics of $\tilde p_n (\balpha)$.

\begin{question}
\label{questionNoOpt}
Fix $J$ and suppose $\balpha$ is feasible for the Stieltjes moment problem but no optimal solution to~\eqref{eqMbsb} exists.  What are the asymptotics of $\tilde p_n (\balpha)$ as $n \to \infty$?
\end{question}

One can also ask a computational question: given $\balpha$ satisfying Assumption~\ref{Assumption1}, can we efficiently compute the corresponding $\bbeta$ and thus the growth constant $M(\balpha)$?   Again essentially all of the work devoted to solving the analogous maximum entropy moment problem can be applied here, since both objective functions are strictly concave.  We refer the reader to~\cite[Chapter 12]{lasserre2010moments}.

	\subsection{Extensions}
	There is a wealth of  extensions and generalizations of the problem of enumerating integer partitions (see e.g.\ \cite{andrews1998theory}).   Many of these extensions can be framed in the maximum entropy framework, leading to continuous convex  programs like~\eqref{eqMbsb} that express the exponential growth rate of the partition number.   Once we have such a  program it is natural to modify it by adding additional constraints or restricting the domain of the candidate functions.  These new optimization problems can be translated back to give new classes of integer partitions problems.   Posing the problems in the maximum entropy framework gives a natural, unifying explanation of many methods and formulas in the literature, but also illuminates some new connections between integer partitions and infinite-dimensional convex programming.  Below we indicate some possible extensions of these methods to previously studied classes of integer partitions.

	\subsubsection{Distinct partitions: changing the objective function}
	\label{secDistinctPart}

	One way to restrict a class of partitions is to insist that each part appear with one of a set of  prescribed multiplicities.  The simplest such case is that of  distinct partitions: partitions in which each part occurs with multiplicity at most $1$; equivalently a distinct partition is a subset of $\mathbb N$ (as opposed to a multiset).    Let $q(n)$ denote the number of distinct partitions of $n$.  The asymptotics of $q(n)$ have been studied in, e.g.~\cite{meinardus1954partitionen,romik2005partitions}.  
	
	Given  a profile $\mathbf N$ indexed by the profile set $J$, let $q(\bN)$ be the number of distinct partitions $\lam$ so that $\sum_{x \in \lam} x^j = \bN_j $ for all $j \in J$, and for $ \balpha \in \R_+^J $ let $\tilde q_n( \balpha) =  q(\bN(\balpha, n)) $.   Using the methods of this paper, and making an assumption analogous to Assumption~\ref{Assumption1}, we believe one could show the exponential growth rate of $\tilde q_n( \balpha) $ is again given by the optimum of a continuous convex program:
	\begin{align*}
	&M_{\mathrm{dist}}(\balpha) = \max_{f \in \cF_1}  \int_{0}^\infty H(f(x)) \, dx \\
	\text{subject to }  
	\quad &\int_0^\infty x^j f(x) \, dx = \balpha_j  \text{ for } j \in J \,,
	\end{align*}
	where $\cF_1$ is the set of all integrable functions $f: [0,\infty) \to [0,1]$ and $H(p) = -p \log p - (1-p) \log(1-p)$ is the entropy of a Bernoulli random variable with parameter $p$.  This is exactly the same optimization problem as in~\eqref{eqMbsb} but with a different objective function; both objective functions are strictly concave, however, and so share essentially all the same qualitative properties.

	\subsubsection{Bounded Young diagrams: restricting the support}
	\label{secRectangle}
	
	Another class of restricted partitions is the class of partitions with bounded Young diagrams.   That is, partitions with restrictions on the size of the largest part and on the number of parts.  Asymptotic enumeration of partitions of $n$ with largest part and number of parts both $\Theta(\sqrt{n})$ has been carried out in~\cite{jiang2019generalized,melczer2018counting}.  
	
	In particular, the method used by Melczer, Panova, and Pemantle in~\cite{melczer2018counting} shares some important  steps in common with our approach here: both are probabilistic approaches to enumeration, and both solve a limiting variational problem.  Melczer, Panova, and Pemantle enumerate partitions by proving a local large deviation principle, which, when solved, produces a probability distribution on partitions given by independent geometric random variables with specified means.   In fact computing a large deviation rate function in this setting is equivalent to minimizing the Kullback-Leibler divergence between probability measures (as in Sanov's Theorem~\cite{sanov1958probability}), which is essentially an entropy maximization problem (see the discussion in, e.g.~\cite{csiszar1975divergence}). Where the maximum entropy and large deviation approaches differ is that a large deviation approach requires a prior distribution on partitions and thus is restricted to settings such as that of bounded Young diagrams, while the principle of maximum entropy works in general, without a prior (and in fact this is an important motivation for the principle itself: to be able to generate a prior when one does not exist). 
	
	One can follow the methods of this paper to enumerate partitions with a given profile and largest part  at most $A \sqrt{n}$. The continuous convex program giving the growth rate is the following:
	\begin{align*}
	&M_{A}(\balpha) = \max_{f }  \int_{0}^A G(f(x)) \, dx \\
	\text{subject to }  
	\quad &\int_0^A x^j f(x) \, dx = \balpha_j  \text{ for } j \in J \,,
	\end{align*}
	In regards to the discussion in Section~\ref{secAssumption} about the existence of a solution, something different happens in this setting.  There are again feasibility conditions on the moments $\balpha$; this time related to the Hausdorff moment problem~\cite{hausdorff1921summationsmethoden} (that of finding a probability distribution on a bounded interval with given moments).  Because the interval $[0,A]$ is compact, if a collection of moments are feasible for the truncated Hausdorff moment problem, then a unique maximum entropy distribution with the given moments always exists~\cite{mead1984maximum}.  The same is true if we maximize geometric entropy, and so the only assumption on $\balpha$ needed is feasibility: the situation discussed in Section~\ref{secAssumption} cannot occur.  
	
	\begin{question}
		Can Theorem \ref{thmIntegralNonDistinct} be shown for partitions in a rectangle under the weakened assumption that $\balpha$ is feasible for the Stieltjes moment problem?
	\end{question}
	
	\subsubsection{Plane Partitions and Higher Dimensions}
	The Young diagram of a partition gives us a two-dimensional representation of the partition $n$; the natural generalization to dimension three --- functions $f: \N^3 \to \{0,1\}$ that are weakly decreasing in each coordinate---give the structures known as \emph{plane partitions}.  The asymptotics for the number of plane partitions of weight $n$ was given by Wright \cite{wright1931asymptotic}, although enumeration of restricted plane partitions appears to not have been explored.  
	
	\begin{question}
		What are the asymptotics for the number of plane partitions that fit in an appropriately scaled rectangular box?
	\end{question}
	
	\subsection{Organization and notation}
In Section~\ref{secEntropy} we discuss the connection between maximum entropy distributions and counting and prove the exact formula~\eqref{eqPNmu}. 	  In Section~\ref{secOPT} we  compute the asymptotics of $e^{H(\mu)}$ by comparing a discrete optimization problem to the continuous optimization problem~\eqref{eqMbsb}.  In Section~\ref{secCLT} we prove a multivariate local central limit theorem to estimate the factor $\mu ( \cP(\bN)) $.  In Section~\ref{secLimitShape} we prove Theorem \ref{thm:shape} which shows the existence of a limit shape.

	All logarithms in this paper are base $e$.  The Shannon entropy of a discrete random variable $X$ with probability mass function $f_X$ is  $H(X) = - \sum_x f_X(x) \log f_X(x)$.    A geometric random variable $X$ with parameter $p \in (0,1)$ has probability mass function $f_X(k) = p(1 - p)^k$ for $k \geq 0$.  Its mean is $\eta = \frac{1-p}{p}$ and its entropy is $(\eta+1) \log (\eta+1) - \eta \log \eta$.   We let $\cP$ denote the set of all partitions, which we  identify with the set $L_0(\N) := \bigoplus_{j = 1}^\infty \N_0$, i.e.\ the set of sequences in $\N_0:= \N\cup \{0\}$ that converge to $0$ (in particular, $\cP$ is a countable set).      We let $\R_+ = (0, \infty)$.   We will use the convention that bold symbols ($\bN, \balpha, \bbeta, \dots$) denote vectors indexed by a profile set $J$ or by the integers $\{1, \dots , d\}$.

	\section{Maximum entropy distributions}
	\label{secEntropy}
	
	In this section we derive the maximum entropy distribution on partitions given moment constraints and give an exact formula for $p(\mathbf{N})$ in terms of this distribution.  
	
	We first give an elementary and completely general statement connecting counting to maximum entropy. 
	\begin{lemma}
		\label{lemEntropyLemma}
		Let $\Omega$ be a finite set and let $f : \Omega \to \R^d$.   For $\bB \in \R^d$ define \\ $\Lam_{\bB} = \{ \omega \in \Omega: f(\omega) = \bB \}$ and suppose $\Lam_{\bB}$ is finite and non-empty.   Let $\mu$ be the maximum entropy distribution on $\Omega$ so that $\E_{\mu} f =\bB$.  Then
		\begin{equation}
		\label{eqEntropyGeneral}
		| \Lam_{\bB}|  = e^{H(\mu)}  \cdot \mu(\Lam_{\bB}) \,.
		\end{equation}
	\end{lemma}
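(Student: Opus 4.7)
The plan is to characterize $\mu$ explicitly as an exponential family distribution and then read the identity off that form. Because $\Omega$ is finite and $\Lam_{\bB}$ is non-empty, the set of distributions $\nu$ on $\Omega$ with $\E_\nu f = \bB$ is a non-empty convex polytope; the entropy functional is strictly concave with infinite gradient at the boundary of the simplex, so a unique maximizer $\mu$ exists. Setting up the Lagrangian with a multiplier $\lambda_0 \in \R$ for the normalization constraint and $\lambda = (\lambda_1, \ldots, \lambda_d) \in \R^d$ for the $d$ moment constraints, the KKT stationarity conditions $-\log \mu(\omega) - 1 = \lambda_0 + \sum_j \lambda_j f_j(\omega)$ yield
\[
\mu(\omega) = Z^{-1} \exp\!\left(\sum_{j=1}^d \lambda_j f_j(\omega)\right),
\qquad Z = \sum_{\omega' \in \Omega} \exp\!\left(\sum_{j=1}^d \lambda_j f_j(\omega')\right).
\]

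Next I would read the identity directly off this exponential form. For any $\omega \in \Lam_{\bB}$, the condition $f(\omega) = \bB$ gives
\[
-\log \mu(\omega) = \log Z - \sum_{j=1}^d \lambda_j \bB_j,
\]
a quantity that does not depend on which $\omega \in \Lam_{\bB}$ is chosen; in particular $\mu$ is constant on $\Lam_{\bB}$. Averaging $-\log \mu$ against $\mu$ and using $\E_\mu f_j = \bB_j$ produces the same value:
\[
H(\mu) = -\E_\mu \log \mu = \log Z - \sum_{j=1}^d \lambda_j \bB_j.
\]
Hence $\mu(\omega) = e^{-H(\mu)}$ for every $\omega \in \Lam_{\bB}$, and summing over $\Lam_{\bB}$ gives $\mu(\Lam_{\bB}) = |\Lam_{\bB}| \, e^{-H(\mu)}$, which is \eqref{eqEntropyGeneral} after rearrangement.

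I do not anticipate any substantive obstacle, since every step is either a standard optimality condition or a one-line calculation. The only mildly subtle point is invoking the KKT representation cleanly: one should note that the infinite-slope behavior of $-x \log x$ at $0$ forces the maximizer to lie in the relative interior of the smallest face of the simplex containing a feasible distribution, so the Lagrange multipliers genuinely exist there. The real content of the lemma is the observation that $-\log \mu$ is affine in $f(\omega)$ on the support of $\mu$, hence constant on each level set of $f$, and on the particular level set $\Lam_{\bB}$ this constant evaluates to $H(\mu)$ itself.
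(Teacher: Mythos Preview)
Your proof is correct and follows essentially the same route as the paper: both derive the exponential-family form $\mu(\omega) = Z^{-1}e^{\lambda\cdot f(\omega)}$ via Lagrangian optimality, compute $H(\mu) = \log Z - \lambda\cdot\bB$, and observe that $\mu$ is constant equal to $e^{-H(\mu)}$ on $\Lam_{\bB}$. The paper phrases the existence of multipliers through Slater's condition and strong duality (after first reducing to the case where $\bB$ lies in the relative interior of the convex hull of $f(\Omega)$), whereas you invoke the KKT conditions and the boundary behavior of $-x\log x$; these are the same argument in slightly different dress.
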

	This is inspired by, e.g.~\cite[Theorem 3.1]{barvinok2010maximum}, and is a simple consequence of the form of maximum entropy distributions subject to mean constraints.  The use of convex duality in the proof comes from~\cite{boyd2004convex,singh2014entropy}.
	\begin{proof}
		Let $K \subseteq \R^d$ be the convex hull of the set $\{ f(\omega): \omega \in \Omega   \}$.  We have $\bB \in K$ since $\Lam_{\bB}$ is non-empty.   We may assume that $\bB$ lies in the relative interior of $K$; otherwise we can restrict ourselves to the proper face $F$ of $K$ in which $\bB$ lies and consider distributions on $\Omega' = \{ \omega \in \Omega : f(\omega) \in F \}$, since any distribution on $\Omega$ satisfying $\E f = \bB$ must be supported on $\Omega'$.   If $F = \{ \bB \}$ then the lemma follows from the fact that the maximum entropy distribution on a finite set is the uniform distribution.  
		
		We can determine the maximum entropy distribution by solving the convex optimization problem
\begin{align*}
&\max - \sum_{\omega \in \Omega} \mu(\omega) \log \mu(\omega)  \\
&\text{subject to } \sum_{\omega \in \Omega} \mu(\omega)f(\omega) = \bB\\
& \quad \sum_{\omega \in \Omega} \mu(\omega) =1 \\
& \quad  \mu(\omega) \ge0 \, \forall \omega \in \Omega  \,.
\end{align*}
The convex dual to this program is 
\begin{align*}
&\min \boldsymbol b  \cdot \bB+ \log \sum_{\omega \in \Omega} e^{- \boldsymbol b \cdot f(\omega)} \\
&\text{where } \boldsymbol b \in \R^d \,.
\end{align*}
Our assumption that $\bB$ is in the relative interior of $K$ means that the primal problem is strictly feasible; i.e. there exists a strictly positive feasible solution $\mu >0$.  Slater's condition (see e.g.~\cite{boyd2004convex}) then guarantees strong duality: the optima of the primal and dual are equal.  This gives an optimal primal solution
		$$\mu(\omega)  = \frac{1}{Z}  e^{- \boldsymbol b  \cdot f(\omega)}   $$
		where $\boldsymbol b \in \R^d$ and $Z$ are chosen  such that $\sum_{\omega} \mu(\omega) =1$ and $\sum \mu(\omega)f(\omega) = \bB$. The normalizing constant $Z$ is called the partition function in statistical mechanics. The existence of such a $\boldsymbol b $  follows from strong duality. 
		
		We then compute
		\begin{align*}
		H(\mu) &= - \sum_{\omega \in \Omega}  \mu(\omega) \log \mu(\omega) \\
		&= \frac{1}{Z} \sum_{\omega \in \Omega}  e^{- \boldsymbol b  \cdot f(\omega)}  \left( \log Z  + \boldsymbol b  \cdot f(\omega) \right )  \\
		&= \log Z +  \boldsymbol b  \cdot \bB \,.
		\end{align*}
		On the other hand, $\mu(\Lam_{\bB}) =  \frac{1}{Z} |\Lam_\bB|  e^{-\boldsymbol b  \cdot \bB}$, and putting these together yields~\eqref{eqEntropyGeneral}. 
	\end{proof}

If the  set $\Omega$ is countably infinite, a maximum entropy distribution subject to a given mean constraint may not exist.  See, e.g.~\cite{csiszar1975divergence,cencov2000statistical}, for some sufficient conditions.  The following lemma will suffice for our application.
\begin{lemma}
		\label{lemEntropyLemma2}
		Let $\Omega$ be a countably infinite set and let $f : \Omega \to \R^d$.   For $\bB \in \R^d$ define $\Lam_{\bB} = \{ \omega \in \Omega: f(\omega) = \bB \}$ and suppose $\Lam_{\bB}$ is finite and non-empty.  Suppose further that there exists some $\boldsymbol b  \in \R^d$ so that  $Z = \sum_{\omega \in \Omega} e^{-\boldsymbol b  \cdot f(\omega)} < \infty$, and with  $\mu(\omega) = \frac{1}{Z} e^{- \boldsymbol b \cdot f(\omega)}$, we have $\E_{\mu} f = \bB$.    Then $\mu$ is the maximum entropy distribution on $\Omega$ so that $\E_{\mu} f =\bB$, and
		\begin{equation}
		\label{eqEntropyGeneral2}
		| \Lam_{\bB}|  = e^{H(\mu)}  \cdot \mu(\Lam_{\bB}) \,.
		\end{equation}
	\end{lemma}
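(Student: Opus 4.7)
The plan is to proceed in two parts: first establish the exact counting identity \eqref{eqEntropyGeneral2} by direct computation using the Gibbs form of $\mu$, and then verify the maximum entropy property via the Gibbs variational principle (non-negativity of Kullback--Leibler divergence). This mirrors the finite-set argument of Lemma~\ref{lemEntropyLemma}, but where there we used convex duality to produce an exponential-family optimizer, here the exponential form of $\mu$ is given to us by hypothesis and the work shifts to justifying the variational inequality in the infinite setting.

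For the identity, I would observe that on $\Lambda_\bB$ the function $f$ is constant and equal to $\bB$, so $\mu(\omega) = Z^{-1} e^{-\boldsymbol b \cdot \bB}$ for every $\omega \in \Lambda_\bB$ and hence $\mu(\Lambda_\bB) = |\Lambda_\bB| Z^{-1} e^{-\boldsymbol b \cdot \bB}$. Next, a direct computation using $\mu(\omega) = Z^{-1} e^{-\boldsymbol b \cdot f(\omega)}$ and the hypothesis $\E_\mu f = \bB$ gives
\[
H(\mu) = -\sum_{\omega \in \Omega} \mu(\omega)\bigl(-\log Z - \boldsymbol b \cdot f(\omega)\bigr) = \log Z + \boldsymbol b \cdot \bB,
\]
and combining these two identities yields $|\Lambda_\bB| = e^{H(\mu)} \mu(\Lambda_\bB)$.

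For the maximum entropy claim, let $\nu$ be any probability distribution on $\Omega$ with $\E_\nu f = \bB$ and $H(\nu)$ finite (if $H(\nu)$ is infinite or $-\infty$ is not achieved then there is nothing to prove, or one argues by a truncation). The Gibbs trick is to write
\[
H(\mu) - H(\nu) \;=\; \sum_{\omega \in \Omega} \nu(\omega) \log \frac{\nu(\omega)}{\mu(\omega)} \;=\; D(\nu \,\|\, \mu),
\]
where the first equality is verified by substituting $-\log \mu(\omega) = \log Z + \boldsymbol b \cdot f(\omega)$ in $-\sum_\omega \nu(\omega) \log \mu(\omega)$ and using $\E_\nu f = \bB$ to get $\log Z + \boldsymbol b \cdot \bB = H(\mu)$. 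The right-hand side $D(\nu\|\mu) \geq 0$ by Jensen's inequality applied to $-\log$ (valid because $\mu(\omega) > 0$ on the support of $\nu$), so $H(\nu) \leq H(\mu)$, and $\mu$ is the entropy maximizer.

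The main obstacle compared to the finite case is ensuring the sums make sense, since $\Omega$ is countably infinite and a priori either $\sum_\omega \nu(\omega) \log \mu(\omega)$ or $\sum_\omega \mu(\omega) \log \mu(\omega)$ could fail to converge absolutely. The key observation is that the hypothesis $Z < \infty$ together with $\E_\nu |\boldsymbol b \cdot f| < \infty$ (which follows once we interpret the mean constraint $\E_\nu f = \bB$ in the standard Lebesgue sense) gives absolute convergence of the relevant rearrangement; in the partition application $f$ takes non-negative values and the $\boldsymbol b_j$ can be chosen so each summand is non-negative, making this issue transparent. Once convergence is in hand the Gibbs computation and the KL argument go through verbatim, and the lemma follows.
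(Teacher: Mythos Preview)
Your proposal is correct and takes essentially the same approach as the paper: the counting identity is obtained by the identical computation $H(\mu)=\log Z+\boldsymbol b\cdot\bB$ and $\mu(\Lambda_\bB)=|\Lambda_\bB|Z^{-1}e^{-\boldsymbol b\cdot\bB}$ (which the paper simply refers back to from Lemma~\ref{lemEntropyLemma}), and the maximum entropy claim is handled by the Gibbs/KL argument you give, which is precisely what the paper's one-line appeal to ``strict convexity of the entropy function'' is gesturing at. If anything, your treatment of the variational step is more explicit than the paper's.
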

\begin{proof}
The fact that $\mu$ is the maximum entropy distribution follows from the strict convexity of the entropy function.  The calculation of $H(\mu)$ and the verification of~\eqref{eqEntropyGeneral2} then follow exactly as in the proof of Lemma~\ref{lemEntropyLemma}.
\end{proof}	
	
Before applying  Lemma~\ref{lemEntropyLemma2} to our setting, we need a lemma relating Assumption~\ref{Assumption1} to the existence of a solution to a system of equations.

\begin{lemma}
\label{lemDiscreteBetas}
	Suppose there exists $  \bbeta = ({\bbeta}_j)_{j \in J} \in \R^J$ so that $$\int_0^\infty \frac{x^j}{\exp\left(\sum_{i \in J} {\bbeta}_i x^i\right) - 1}\,dx = \balpha_j$$ for all $j \in J$.  Then for $n$ sufficiently large, there exists $\widehat \bbeta = (\widehat \bbeta_j)_{j \in J}$ so that 
\begin{equation}
\label{eq:geo-discrete}
\sum_{k \geq 1} \frac{k^j}{\exp\left(\sum_{i \in J} \widehat\bbeta_i k^i\right)-1} = \lfloor \balpha_j n^{(j+1)/2}\rfloor\,.
\end{equation}
	Further, as $n$ tends to infinity $\bbh_j n^{j/2} \to \bbeta_j$ for each $j \in J$.
\end{lemma}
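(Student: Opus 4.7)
The plan is to solve the discrete system~\eqref{eq:geo-discrete} by viewing its left-hand side as a Riemann-sum perturbation of the continuous left-hand side of~\eqref{eq:geo-cont}, and then invoking the inverse function theorem at the known solution $\bbeta$ of~\eqref{eq:geo-cont}. The natural rescaling is dictated by the target exponent: substituting $\widehat{\bbeta}_i = \gamma_i / n^{i/2}$ into the left-hand side of~\eqref{eq:geo-discrete} and changing variables to $x = k/\sqrt{n}$ converts the $j$-th equation (after dividing through by $n^{(j+1)/2}$) into
\begin{equation*}
G_n(\gamma)_j \;:=\; \frac{1}{\sqrt{n}} \sum_{k \geq 1} \frac{(k/\sqrt{n})^j}{\exp\!\bigl(\sum_{i \in J} \gamma_i (k/\sqrt{n})^i\bigr)-1} \;=\; \frac{\lfloor \balpha_j n^{(j+1)/2} \rfloor}{n^{(j+1)/2}},
\end{equation*}
so the problem becomes $G_n(\gamma^{(n)}) = \balpha + o(1)$, with $F(\bbeta) = \balpha$ by hypothesis, where $F$ denotes the continuous analogue with integral in place of Riemann sum.

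I would carry this out in three steps. First, let $\Omega \subset \R^J$ denote the open convex set of $\gamma$ for which the polynomial $p_\gamma(x) = \sum_{i \in J} \gamma_i x^i$ is strictly positive on $(0,\infty)$; both $F$ and $G_n$ are $C^1$ maps $\Omega \to \R^J$, and standard Riemann-sum control (using exponential decay of the integrand at infinity and the fact that near $0$ the integrand $x^j/(\exp p_\gamma -1)$ behaves like $x^{j-\jmin}/\gamma_{\jmin}$, which is continuous and bounded since $j \geq \jmin$) gives $G_n \to F$ and $DG_n \to DF$ uniformly on compact subsets of $\Omega$. Second, the Jacobian
\begin{equation*}
DF(\gamma)_{j,\ell} \;=\; -\int_0^\infty x^{j+\ell}\, \frac{e^{p_\gamma(x)}}{(e^{p_\gamma(x)}-1)^2}\,dx
\end{equation*}
is, up to sign, the Gram matrix of the linearly independent family $\{x^j : j \in J\}$ in $L^2(\R_+, w\,dx)$ with strictly positive weight $w = e^{p_\gamma}/(e^{p_\gamma}-1)^2$; hence $DF(\gamma)$ is negative definite, in particular invertible, at every $\gamma \in \Omega$, and this is the same matrix as the $\Sigma$ appearing in~\eqref{eqSigmaDef}. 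Third, the inverse function theorem yields neighborhoods $U \ni \bbeta$ and $V \ni \balpha$ and a $C^1$ diffeomorphism $F: U \to V$. By the uniform convergence $DG_n \to DF$, for all $n$ sufficiently large $G_n$ is a $C^1$ diffeomorphism from a slightly smaller neighborhood $U' \subset U$ onto a neighborhood $V' \subset V$ of $\balpha$; since the right-hand side $(\lfloor \balpha_j n^{(j+1)/2}\rfloor / n^{(j+1)/2})_{j \in J}$ converges to $\balpha$, it lies in $V'$ for $n$ large, producing a unique $\gamma^{(n)} \in U'$ solving $G_n(\gamma^{(n)}) = \lfloor \balpha n^{(j+1)/2}\rfloor/n^{(j+1)/2}$. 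Unscaling via $\widehat{\bbeta}_i = \gamma^{(n)}_i/n^{i/2}$ produces the desired solution to~\eqref{eq:geo-discrete}, and since $F(\gamma^{(n)}) = G_n(\gamma^{(n)}) + o(1) \to \balpha = F(\bbeta)$ with $F$ a local diffeomorphism at $\bbeta$, we conclude $\gamma^{(n)} \to \bbeta$, i.e. $\widehat{\bbeta}_j n^{j/2} \to \bbeta_j$.

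The main technical point is to verify that the convergence $G_n \to F$ and $DG_n \to DF$ is uniform on a compact neighborhood of $\bbeta$ in $\Omega$, strong enough for a single open set $V'$ to work uniformly in $n$. This reduces to a uniform Riemann-sum estimate for the family of smooth integrands $x \mapsto x^j/(e^{p_\gamma(x)}-1)$ and their partials in $\gamma$, and is straightforward once one has a uniform exponential tail bound (available on any compact $K \subset \Omega$ since $p_\gamma$ is eventually dominated by a positive power uniformly in $\gamma \in K$) together with uniform boundedness near $0$ from the observation about the exponents above. Everything else—smoothness, convexity of $\Omega$, and invertibility of $DF$—is essentially automatic from Assumption~\ref{Assumption1}.
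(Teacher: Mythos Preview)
Your proposal is correct and follows essentially the same route as the paper: the same rescaling $\widehat\bbeta_i = \gamma_i/n^{i/2}$, the same Riemann-sum-to-integral comparison, the same Gram-matrix argument for invertibility of the Jacobian (which is $-\Sigma$), and an inverse-function-theorem step to solve the perturbed system near $\bbeta$. The only difference is cosmetic: where you invoke the standard inverse function theorem together with uniform $C^1$ convergence, the paper isolates a self-contained quantitative IFT lemma (Lemma~\ref{lem:IFT}, proved by Newton iteration) that directly supplies the uniform-in-$n$ neighborhood you need.
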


To prove Lemma~\ref{lemDiscreteBetas} we need the following basic calculus fact.
\begin{lemma}\label{lem:IFT}
	Let $g: \R^d \to \R^d$ be continuously differentiable with $g(0) = 0$ and assume $g'(0) = M$ is invertible.  Suppose there is a $\delta > 0$ so that for $\|x \| \leq \delta$ we have $\|M^{-1}\|\cdot \|g'(x) - M\| \leq 1/2$.  Then for all $y$ with $\|M^{-1}\| \cdot \|y\| \leq \delta/2$ there is some $\|x\| \leq \delta$ so that $g(x) = y$.
\end{lemma}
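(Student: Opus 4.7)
The plan is to recast the equation $g(x) = y$ as a fixed-point problem and apply the Banach fixed-point (contraction mapping) theorem. Specifically, define the Newton-like map
\[
T_y(x) := x - M^{-1}\bigl(g(x) - y\bigr) = M^{-1}y + \bigl(x - M^{-1} g(x)\bigr),
\]
so that $g(x) = y$ if and only if $T_y(x) = x$. My goal is to show that $T_y$ is a strict contraction from the closed ball $B_\delta = \{x \in \R^d : \|x\| \le \delta\}$ into itself, whenever $\|M^{-1}\|\cdot\|y\| \le \delta/2$.

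For the contraction property, I would compute the derivative $T_y'(x) = I - M^{-1} g'(x) = M^{-1}(M - g'(x))$ and use the hypothesis directly:
\[
\|T_y'(x)\| \le \|M^{-1}\|\cdot\|g'(x) - M\| \le \tfrac{1}{2}
\qquad \text{for all } \|x\| \le \delta.
\]
The mean value inequality then gives $\|T_y(x_1) - T_y(x_2)\| \le \tfrac{1}{2}\|x_1 - x_2\|$ for $x_1, x_2 \in B_\delta$, so $T_y$ is $\tfrac12$-Lipschitz there.

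For invariance of $B_\delta$, I would use $T_y(0) = M^{-1} y$, so by hypothesis $\|T_y(0)\| \le \|M^{-1}\|\cdot\|y\| \le \delta/2$. Combining this with the contraction bound,
\[
\|T_y(x)\| \le \|T_y(0)\| + \|T_y(x) - T_y(0)\| \le \tfrac{\delta}{2} + \tfrac{1}{2}\|x\| \le \delta
\qquad \text{for } x \in B_\delta.
\]
Thus $T_y$ maps the complete metric space $B_\delta$ strictly into itself as a contraction, and the Banach fixed-point theorem produces a (unique) $x^* \in B_\delta$ with $T_y(x^*) = x^*$, i.e.\ $g(x^*) = y$.

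There is no real obstacle: the lemma is a standard quantitative version of the inverse function theorem, and the only place where the precise factor $1/2$ in the hypothesis is used is in making the two bounds $\|T_y(0)\| \le \delta/2$ and Lipschitz constant $\le 1/2$ combine cleanly to give self-mapping of $B_\delta$. If one were aiming for the full inverse function theorem one would additionally check smoothness of the inverse, but the present lemma asks only for existence of a preimage, which the fixed-point argument delivers directly.
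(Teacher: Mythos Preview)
Your proof is correct and is essentially the same as the paper's: both use the Newton-like iteration $x \mapsto x - M^{-1}(g(x)-y)$, with the paper carrying out the iteration explicitly (showing $\|x_k\|\le\delta$ and $\|x_k - x_{k-1}\| \le \delta 2^{-k+1}$ by hand) while you package the same estimates as an application of the Banach fixed-point theorem. Your version is slightly cleaner.
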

	\begin{proof}
		Set $x_0 = 0$ and $x_k = M^{-1}\left(M x_{k-1} + y - g(x_{k-1})\right)$ for $k \geq 1$.  Then we first note that for any $x$ with $\|x\| \leq \delta$ we have  \begin{align*}\|M^{-1}(M x - g(x))\| &\leq \| M^{-1}\| \cdot\| M x  - \int_0^1 g'(tx) \cdot x\,dt \| \\
		&\leq \|M^{-1}\| \|x \| \cdot \max_{t \in [0,1]} \| g'(tx)\| \\
		&\leq \delta/2\,.
		\end{align*}
		By induction, we claim that $\|x_k \| \leq \delta$.  Indeed \begin{align*}
		\|x_k\| \leq  \|M^{-1}(M x_{k-1} - g(x_{k-1}))\| + \|M^{-1}\| \|y\|\leq \delta\,.
		\end{align*}
		
		We now want to show that the sequence $\{x_k\}$ is Cauchy.  By the mean-value theorem, we have 
		\begin{align*}\|x_k - x_{k-1}\| &\leq \|M^{-1}\| \|M(x_{k-1} - x_{k-2}) - \left(g(x_{k-1}) - g(x_{k-2})\right)\| \\
		&= \|M^{-1}\| \|M(x_{k-1} - x_{k-2}) - \int_0^1 g'(x_{k-2} + t(x_{k-1} - x_{k-2})) \cdot(x_{k-1} - x_{k-2})\| \\
		&\leq \| M^{-1}\| \max_{t \in [0,1]}\| g'(x_{k-2} + t(x_{k-1} - x_{k-2})) - M \| \cdot \|x_{k-1} - x_{k-2}\|\,. 
		\end{align*}
		We claim that $\|x_k - x_{k-1}\| \leq \delta 2^{-k+1}$ and prove so by induction.  Since $\|x_k\| \leq \delta$ and the ball of radius $\delta$ is convex, we have  $\| x_{k-2} + t(x_{k-1} - x_{k-2})\| \leq \delta$ and so by the above we may bound $$\|x_k - x_{k-1}\| \leq \frac{1}{2} \| x_{k-1} - x_{k-2}\|$$
		completing the proof that $\| x_k - x_{k-1} \| \leq \delta 2^{-k+1}$.  
		
		This shows that $\{x_k\}$ is Cauchy and thus converges to some $x_\infty$. Taking limits of both sides of the definition of $x_k$ then shows that $g(x_\infty) = y$.
	\end{proof}

Now we prove Lemma~\ref{lemDiscreteBetas}.
\begin{proof}[Proof of Lemma \ref{lemDiscreteBetas}]
	Rewrite our desired equality as \begin{equation*}
	f_j(\widehat \bbeta):= n^{-1/2}\sum_{k \geq 1} \frac{(k n^{-1/2})^j}{\exp\left(\sum_{i \in J} (\widehat\bbeta_i n^{i/2}) (k n^{-1/2})^i\right)-1} = n^{-(j+1)/2}\lfloor \balpha_j n^{(j+1)/2}\rfloor\,.
	\end{equation*}
	
	Note that evaluating at $\widehat\bbeta_0 := ({\bbeta}_j n^{-j/2})_{j \in J}$ gives $$f_j(\widehat\bbeta_0 ) \to \balpha_j$$ as $n \to \infty$.  Further, if we define $\mathbf{f} = (f_j)_{j \in J}$ to be a function from $\R^J \to \R^J$ then observe that as $n \to \infty$ we have $$(f'(\widehat\bbeta_0))_{i,j} \to -\int_0^\infty \frac{x^{j+i} \exp\left(\sum_{i \in J} {\bbeta}_i x^i \right) }{\left(\exp\left(\sum_{i \in J} {\bbeta}_ix^i \right)  - 1 \right)^2}\,dx = -\Sigma_{i,j}\,.$$
	Since $\Sigma$ is a Gram matrix of linearly independent entries, it is positive definite and thus invertible.  This means that we can find a $\delta$ so that for $n$ sufficiently large we have $\| f'(\widehat\bbeta_0) ^{-1}\| \cdot \| f'(\widehat\bbeta) - f'(\widehat\bbeta_0)\| \leq 1/2$ for $\|\widehat\bbeta - \widehat\bbeta_0\| \leq \delta$.  For $n$ sufficiently large we have 
$$\|f'(\widehat\bbeta_0)^{-1}\| \cdot \| \left(n^{-(j+1)/2} \lfloor \balpha_j n^{(j+1)/2} \rfloor\right)_{j \in J} - (\balpha_j)_{j \in J}\| \leq \delta/2 \,,$$
 and so we may apply Lemma \ref{lem:IFT} to find the desired solution.  Noting that we may take $\delta \to 0$ slowly shows convergence.
\end{proof}

	As a corollary of Lemmas~\ref{lemEntropyLemma2} and~\ref{lemDiscreteBetas} we derive a formula for $p(\mathbf N)$.
	\begin{cor}
		\label{corPartitionNumberFormula}
		Let $J$ be a profile set and suppose $\balpha  \in \R^J$ satisfies Assumption~\ref{Assumption1}. 
		Let  $\mathbf N =\bN(\balpha,n) $.  Then for large enough $n$, 
		\begin{equation}
		\label{eqPNmu2}
		p(\mathbf N) = e^{H(\mu_n)} \mu_n( \cP(\mathbf N)) \,,
		\end{equation}
		where $\mu_n$ is the maximum entropy distribution on $\cP$ so that   
		\begin{equation}
		\label{eqConstraints1}
		\E_{\lam \sim \mu_n} \sum_{ x \in \lam} x^j = \bN_j 
		\end{equation}
		for all $j \in J$.  In particular, $\mu_n$ is the product measure on $\N_0^\N$ where the projection $\mu_n^k$ to  coordinate $k$ is given by a geometric random variable with parameter $p_k:= 1 - \exp(-\sum_{j \in J} \widehat\bbeta_j k^j )$ where $\widehat\bbeta$ is the solution to~\eqref{eq:geo-discrete} guaranteed by Lemma~\ref{lemDiscreteBetas}.
	\end{cor}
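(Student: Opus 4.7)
The plan is to obtain the corollary as a direct application of Lemma~\ref{lemEntropyLemma2}, taking $\Omega = \cP$, $f(\lam) = (\sum_{x \in \lam} x^j)_{j \in J}$, and $\bB = \bN(\balpha,n)$. Identifying each partition $\lam$ with its multiplicity sequence $(m_k)_{k \geq 1} \in L_0(\N)$, the map $f$ becomes the linear function $f(\lam) = (\sum_{k \geq 1} m_k k^j)_{j \in J}$. The candidate $\boldsymbol b$ required by Lemma~\ref{lemEntropyLemma2} will be $\widehat{\bbeta}$, whose existence for all sufficiently large $n$ is guaranteed by Lemma~\ref{lemDiscreteBetas} applied to the $\bbeta$ furnished by Assumption~\ref{Assumption1}.

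The key observation is that $\widehat{\bbeta} \cdot f(\lam) = \sum_{k \geq 1} m_k P_n(k)$, where $P_n(k) := \sum_{j \in J} \widehat{\bbeta}_j k^j$, so the Gibbs weight $e^{-\widehat{\bbeta} \cdot f(\lam)}$ factorizes over $k$. Summing a geometric series in each coordinate gives the partition function $Z = \prod_{k \geq 1} (1 - e^{-P_n(k)})^{-1}$, and the resulting measure $\mu_n$ is the product measure whose $k$th marginal is geometric with parameter $p_k = 1 - e^{-P_n(k)}$. The mean of this geometric is $1/(e^{P_n(k)} - 1)$, so the moment constraint $\E_{\mu_n}\sum_k m_k k^j = \bN_j$ is precisely \eqref{eq:geo-discrete}, which is satisfied by construction of $\widehat{\bbeta}$.

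The main technical point I anticipate is verifying $Z < \infty$, which requires showing $P_n(k) > 0$ for all $k \geq 1$ and that $P_n(k) \to \infty$ polynomially in $k$. Since $\widehat{\bbeta}_j n^{j/2} \to \bbeta_j$, the substitution $k = x\sqrt{n}$ gives $P_n(k) \approx \sum_{j \in J} \bbeta_j x^j$, which is strictly positive on $(0,\infty)$ because convergence of the integrals in Assumption~\ref{Assumption1} forces $\sum_{j \in J} \bbeta_j x^j > 0$ on $(0,\infty)$; in particular $\bbeta_{\max J} > 0$, so for large $n$ the leading coefficient $\widehat{\bbeta}_{\max J}$ is positive and $P_n(k)$ grows polynomially, making $\sum_k \log(1/p_k)$ summable. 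If $\cP(\bN) = \emptyset$, both sides of~\eqref{eqPNmu2} are zero and there is nothing to prove; otherwise $\cP(\bN)$ is automatically finite, since fixing any $j \in J$ with $j \geq 1$ bounds every part by $\bN_j^{1/j}$ and the total number of parts by $\bN_j$. With all hypotheses of Lemma~\ref{lemEntropyLemma2} verified, the identity~\eqref{eqPNmu2} together with the characterization of $\mu_n$ as the maximum entropy distribution satisfying~\eqref{eqConstraints1} follows immediately.
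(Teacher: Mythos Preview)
Your proof is correct and follows essentially the same route as the paper: apply Lemma~\ref{lemEntropyLemma2} with $\Omega=\cP$, $f_j(\lam)=\sum_{x\in\lam}x^j$, and $\boldsymbol b=\widehat\bbeta$ from Lemma~\ref{lemDiscreteBetas}, then observe that the Gibbs weight factorizes over parts into a product of geometric distributions whose means satisfy~\eqref{eq:geo-discrete}. You are in fact slightly more explicit than the paper in checking the side conditions (finiteness of $Z$ via $P_n(k)>0$ and polynomial growth, finiteness and possible emptiness of $\cP(\bN)$), which the paper leaves implicit.
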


	\begin{proof}
	Recall that the set of all integer partitions, $\cP$, is a countable set. Let $f: \cP \to \R ^{J}$ be defined by $f_j(\lam)  = \sum_{x \in \lam } x^j$.    Since $\balpha$ satisfies Assumption~\ref{Assumption1}, there exists $ \bbeta$ solving~\eqref{eq:geo-cont}, and so by Lemma~\ref{lemDiscreteBetas}, there exists $\widehat\bbeta$ solving the system~\eqref{eq:geo-discrete}.  Let $\mu_n$ be the distribution on $\N_0^\N$ described in the last sentence of the statement of the lemma.  

For a  partition $\lam$, let $a_k$ be the multiplicity of $k$ in $\lam$.  We write $f_j(\lam) = \sum_{k \ge 1} a_k k^j$ and compute
\begin{align}
\nonumber
\log \mu_n(\lam) &= \sum_{k \ge 1}  \log p_k  + a_k \log (1-p_k) \\
\nonumber
&= \sum_{k \ge 1}   \log \left(  1 - \exp(-\sum_{j \in J} \widehat\bbeta_j k^j )  \right)  - \sum_{k \ge 1} a_k   \sum_{j \in J} \widehat\bbeta_j k^j \\
\label{eqlogmu}
&= - \log Z -  \sum_{j \in J}\widehat\bbeta_j  \sum_{k \ge 1} a_k   k^j
\end{align}
where we have defined $\log Z := - \sum_{k \ge 1}   \log \left(  1 - \exp(-\sum_{j \in J} \widehat\bbeta_j k^j )  \right)$.   Exponentiating~\eqref{eqlogmu} gives $$\mu_n (\lam) = \frac{1}{Z} e^{-\widehat\bbeta \cdot f(\lam) }\,,$$
as required for Lemma~\ref{lemEntropyLemma2}.
  Moreover, since $f_j$ is additive over coordinates for each $j \in J$,
\begin{align*}
\E_{\lam \sim \mu_n} [ f_j (\lam)] &=  \sum_{k \ge 1}  \frac{ \exp(-\sum_{\ell \in J} \widehat\bbeta_\ell k^\ell )  }{ 1- \exp(-\sum_{\ell \in J} \widehat\bbeta_\ell k^\ell )    }  k^j \\
&= \sum_{k \ge 1}  \frac{k^j }{  \exp(\sum_{\ell \in J} \widehat\bbeta_\ell k^\ell )  -1  }   = N_j
\end{align*}
by our assumption on $\widehat\bbeta$.   Therefore by Lemma~\ref{lemEntropyLemma2}, $\mu_n$ is the maximum entropy distribution on $\cP$ such that $\E_{\lam \sim \mu_n} \sum_{x \in \lam} x^j = N_j$ for $j \in J$ and $p(\mathbf N) = e^{H(\mu_n)} \mu_n( \cP(\mathbf N))$.
	\end{proof}

Finally we determine the optimum and optimizer of the continuous convex program~\eqref{eqMbsb}. 	
\begin{lemma}
\label{lemContinuousSolution}
Suppose $\balpha$ satisfies Assumption~\ref{Assumption1} and let $\bbeta$ be a solution to~\eqref{eq:geo-cont}.   Let
\begin{align*}
f^*(x) &= \frac{1}{ \exp( \sum_{j \in J} \bbeta_j x^j) - 1} \,.
\end{align*}
Then $f^*$ is an optimal solution to~\eqref{eqMbsb}, and thus 
$$M(\balpha) = \int_{0}^\infty G( f^*(x)) \, dx \, . $$
\end{lemma}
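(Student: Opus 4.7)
The plan is to use the standard convex-duality argument: derive $f^*$ from the first-order KKT conditions, then certify global optimality via the concavity of $G$. First, I would write down the Lagrangian
\[\mathcal{L}(f) = \int_0^\infty G(f(x))\,dx - \sum_{j \in J} \bbeta_j \left( \int_0^\infty x^j f(x)\,dx - \balpha_j \right),\]
and set the functional derivative to zero pointwise, giving $G'(f(x)) = \sum_{j \in J} \bbeta_j x^j$. Since $G'(\eta) = \log(1 + 1/\eta)$, inversion yields exactly $f(x) = 1/(\exp(\sum_{j \in J} \bbeta_j x^j) - 1)$, confirming the candidate. Assumption~\ref{Assumption1}, which is precisely equation~\eqref{eq:geo-cont}, guarantees $f^*$ is feasible.

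The next step is to convert this formal derivation into a rigorous certificate of optimality via the concavity of $G$. Since $G''(\eta) = -1/(\eta(\eta+1)) < 0$, the tangent-plane (supporting hyperplane) inequality gives, for every $x \geq 0$ and every $f(x) \geq 0$,
\[G(f(x)) \leq G(f^*(x)) + G'(f^*(x))\bigl(f(x) - f^*(x)\bigr) = G(f^*(x)) + \Big(\sum_{j \in J} \bbeta_j x^j\Big)\bigl(f(x) - f^*(x)\bigr).\]
If $f \in \cF$ is feasible, then $\int_0^\infty x^j f\,dx = \balpha_j = \int_0^\infty x^j f^*\,dx$ for each $j \in J$, so integrating the correction term yields $\sum_{j \in J} \bbeta_j(\balpha_j - \balpha_j) = 0$. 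Hence $\int G(f)\,dx \leq \int G(f^*)\,dx$ for every feasible $f$, which identifies $M(\balpha) = \int_0^\infty G(f^*(x))\,dx$.

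The main technical obstacle is verifying that each integral in the argument is well-defined and that the splitting $\int (\sum_j \bbeta_j x^j)(f - f^*)\,dx = \sum_j \bbeta_j \int x^j(f - f^*)\,dx$ is valid. The moment integrals $\int x^j f\,dx$ and $\int x^j f^*\,dx$ are finite by feasibility, so the splitting is legitimate by linearity and the cross-term vanishes. It remains to check $\int G(f^*)\,dx < \infty$. For $x \to \infty$, convergence of \eqref{eq:geo-cont} forces $\sum_j \bbeta_j x^j \to +\infty$, so $f^*(x)$ decays exponentially and one can bound $G(f^*(x)) \lesssim (\sum_j \bbeta_j x^j) f^*(x)$ for large $x$, which is integrable by the moment assumptions. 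For $x \to 0^+$: if $\jmin \geq 1$ then $f^*(x) \sim 1/(\bbeta_{\jmin} x^{\jmin})$ and $G(f^*(x)) \sim \log(1/x)$, integrable near $0$; if $\jmin = 0$ then $f^*$ is bounded near the origin and there is no issue. With these tail estimates in hand, the pointwise concavity inequality integrates cleanly; the possibility $\int G(f)\,dx = +\infty$ is ruled out a posteriori, since the right-hand side is finite. Strict concavity of $G$ also yields uniqueness of $f^*$ almost everywhere, though this is not needed for the statement.
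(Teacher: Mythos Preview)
Your proposal is correct and follows essentially the same route as the paper: both write down the Lagrangian, derive $f^*$ from the stationarity condition $G'(f(x)) = \sum_{j \in J}\bbeta_j x^j$, and invoke concavity of $G$ to certify global optimality. The paper's proof is terser---it appeals to infinite-dimensional convex duality (citing Rockafellar) for the sufficiency of the first-order condition---whereas you spell out the tangent-plane inequality and the integrability checks directly; this makes your version more self-contained but not conceptually different.
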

\begin{proof}
This is a consequence of duality in infinite dimensional convex programming (see e.g.~\cite{rockafellar1974conjugate}).   The program~\eqref{eqMbsb} is an infinite-dimensional convex program since the constraints are linear in $f(x)$ and the objective function $\int_{0}^\infty G(f(x)) \, dx $ is strictly concave (this follows from the fact that $G(\eta)$ is a strictly concave function of $\eta$).   The Lagrangian associated to this program is
\begin{align*}
L( f, y, \bbeta) &= \int_{0}^\infty G(f(x)) \, dx - \sum_{j \in J} \bbeta_j  \left( \int x^j f(x) \, dx - \balpha_j \right)\,.
\end{align*}
A sufficient condition for optimality of $f^*$ is that $f^*$ is feasible and  the function derivative of $L(f,y, \bbeta)$ with respect to $f$ vanishes at $f^*$.  This is the condition
\begin{align*}
0= \log \left (1+ \frac{1}{f(x)} \right) - \sum_{j \in J} \bbeta_j x^j
\end{align*}
 for all $x \in [0, \infty)$.   This is satisfied by $f^*(x)$ with $ \bbeta$ as given by Assumption~\ref{Assumption1} since $ \log \left (1+ \frac{1}{f(x)} \right) = \sum_{j \in J} \bbeta_j x^j $, and so $f^*$ is an optimal solution. 
\end{proof}

	\section{Asymptotics of $e^{H(\mu_n)}$}
	\label{secOPT}
	
	In this section we  compute the asymptotics of the first term in the formula~\eqref{eqPNmu2}.    In what follows we fix the profile set $J$ and $\balpha \in \R_+^J$ satisfying Assumption~\ref{Assumption1}.  We let $\mathbf N = \bN(\balpha,n)$ and let $\bbeta$ be the solution to~\eqref{eq:geo-cont} guaranteed by Assumption~\ref{Assumption1}.

	\begin{lemma}
		\label{lemEntropyApprox}
		With $\mu_n$ defined as in Corollary~\ref{corPartitionNumberFormula},  we have 
		\begin{equation}
		H(\mu_n) =  \sqrt{n} M(\balpha) + b_1(J) \log n +c_1(\balpha) +o(1) 
		\end{equation}
		as $n \to \infty$, where $b_1(J) =  - \frac{\jmin}{4}$ and 
\begin{align*}
c_1(\balpha) &= -\frac{\jmin}{2} \log(2\pi)    + \frac{\one_{\jmin = 0}}{2}\left(\frac{{\bbeta}_0}{e^{{\bbeta}_0} - 1} - G\left ( \frac{ 1} { e^{{\bbeta}_0}-1} \right)\right) + \frac{\one_{\jmin \geq 1}}{2} \log({\bbeta}_{\jmin})    \,.
\end{align*}
	\end{lemma}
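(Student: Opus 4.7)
The plan is to exploit the exponential-family form of $\mu_n$. Writing $\log \mu_n(\lambda) = -\log Z - \widehat\bbeta \cdot f(\lambda)$ with $f_j(\lambda) = \sum_{x \in \lambda} x^j$, and factoring $Z = \prod_{k \geq 1}(1 - e^{-\widehat P(k)})^{-1}$ where $\widehat P(k) = \sum_{j \in J} \widehat\bbeta_j k^j$, one obtains the clean decomposition
\[ H(\mu_n) \ =\ \sum_{j \in J}\widehat\bbeta_j \bN_j \ -\ \sum_{k \geq 1}\log\bigl(1 - e^{-\widehat P(k)}\bigr). \]
The analogous continuous identity, obtained from $G(\eta) = u/(e^u - 1) - \log(1 - e^{-u})$ at $u = Q(y) := \sum_{j \in J} \bbeta_j y^j$ together with the constraint $\int_0^\infty y^j f^*(y)\,dy = \balpha_j$, reads
\[ M(\balpha) \ =\ \sum_{j \in J} \bbeta_j \balpha_j \ -\ \int_0^\infty \log\bigl(1 - e^{-Q(y)}\bigr)\,dy. \]
Thus $H(\mu_n) - \sqrt n\, M(\balpha)$ equals the discrepancy in the linear piece plus the Riemann-sum error of $\sum_k \log(1 - e^{-\widehat P(k)})$ against $\sqrt n \int_0^\infty \log(1 - e^{-Q})\,dy$.

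The next step is to show that the corrections arising from the difference between $\widehat\bbeta_j$ and $\bbeta_j n^{-j/2}$ cancel between the two pieces. A quantitative refinement of Lemma~\ref{lemDiscreteBetas}, obtained by bounding the Riemann-sum error in~\eqref{eq:geo-discrete}, gives $\widehat\bbeta_j n^{j/2} - \bbeta_j = O(n^{-1/2})$. The linear discrepancy then equals $\sum_j (\widehat\bbeta_j n^{j/2} - \bbeta_j)\balpha_j \sqrt n + o(1)$. Symmetrically, Taylor-expanding $\log(1 - e^{-\widehat P(k)})$ around $\log(1 - e^{-Q(k/\sqrt n)})$ in the parameter $\widehat\bbeta_j - \bbeta_j n^{-j/2}$ and using the Riemann-sum approximation of $\balpha_j = \int_0^\infty y^j f^*(y)\,dy$ produces the same first-order correction in the nonlinear sum, with opposite sign. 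The two cancel, reducing the problem to evaluating
\[ R_n \ :=\ \sum_{k \geq 1}\log\bigl(1 - e^{-Q(k/\sqrt n)}\bigr) \ -\ \sqrt n \int_0^\infty \log\bigl(1 - e^{-Q(y)}\bigr)\,dy. \]

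The final step is an Euler--Maclaurin analysis of $R_n$ for the function $h(y) = \log(1 - e^{-Q(y)})$, which is smooth on $(0, \infty)$. When $\jmin = 0$, $h$ is bounded at $0$ with $h(0) = \log(1 - e^{-\bbeta_0})$, and classical Euler--Maclaurin yields $R_n = -h(0)/2 + o(1)$; an algebraic simplification (writing $G(\eta)$ in terms of $\eta = 1/(e^{\bbeta_0} - 1)$) gives the identity $\tfrac12\log(1 - e^{-\bbeta_0}) = \tfrac12\bigl(\bbeta_0/(e^{\bbeta_0}-1) - G(1/(e^{\bbeta_0}-1))\bigr)$, matching the stated $c_1(\balpha)$ with $b_1(J) = 0$. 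When $\jmin \geq 1$, $h(y) = \log \bbeta_{\jmin} + \jmin \log y + O(y^{\min(j_1 - \jmin,\,\jmin)})$ near $0$ (with $j_1$ the second-smallest element of $J$), and $R_n$ picks up a singular contribution. I would split the sum at $K = \delta\sqrt n$: Euler--Maclaurin on the tail $k > K$ contributes $\sqrt n \int_\delta^\infty h\,dy - h(\delta)/2 + O(n^{-1/2})$, while on the head $k \leq K$ one evaluates $\sum_{k \leq K}(\log \bbeta_{\jmin} + \jmin \log(k/\sqrt n))$ explicitly using Stirling's formula $\log K! = K\log K - K + \tfrac12 \log(2\pi K) + O(K^{-1})$, the remainder $O(y^{\min(j_1 - \jmin,\,\jmin)})$ being controlled separately. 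Combining and taking $\delta \to 0$, the $\log \delta$-dependent pieces cancel (the $\tfrac{\jmin}{2}\log\delta$ from Stirling against the $-\tfrac12 \jmin \log \delta$ inside $-h(\delta)/2$), giving
\[ R_n \ =\ \frac{\jmin}{4}\log n + \frac{\jmin}{2}\log(2\pi) - \frac{1}{2}\log \bbeta_{\jmin} + o(1), \]
and $H(\mu_n) - \sqrt n\, M(\balpha) = -R_n + o(1)$ then produces the claimed asymptotic.

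The main obstacle is the singular Euler--Maclaurin analysis in the case $\jmin \geq 1$: one must control the $O(y^{\min(j_1 - \jmin, \jmin)})$ remainder terms in the head sum and verify that all $\delta$-dependent residuals genuinely cancel in the iterated limit $n \to \infty$, $\delta \to 0$ (taken in that order). A secondary technical point is sharpening Lemma~\ref{lemDiscreteBetas} to a rate $\widehat\bbeta_j n^{j/2} - \bbeta_j = O(n^{-1/2})$, which is needed for the cancellation argument but follows directly from its existing proof by a Riemann-sum error estimate.
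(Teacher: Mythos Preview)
Your approach is correct and takes a genuinely different route from the paper's. Both arguments rest on the identity $G\bigl(1/(e^u-1)\bigr)=u/(e^u-1)-\log(1-e^{-u})$, but they organize the two pieces differently. The paper applies Euler--Maclaurin to the full function $G$ at the perturbed parameters $\widehat\bbeta$ (this is Lemma~\ref{lem:EM-asymptotic}, itself proved in the appendix via the two pieces), and then Taylor-expands the resulting integral $\int_0^\infty G(f^*_{\widehat\bbeta n^{\cdot/2}})\,dx$ around $\bbeta$; this last step requires the \emph{explicit} leading term of $\beps$ from Lemma~\ref{lem:eps} (involving $\Sigma^{-1}$), which produces a constant $+\tfrac12(\one_{\jmin=0}\bbeta_0/(e^{\bbeta_0}-1)+\one_{\jmin\ge1})$ that cancels against the $-\tfrac12$ coming from the Euler--Maclaurin correction to the piece $u/(e^u-1)$.

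You instead use the defining constraint $\sum_k k^j\widehat f(k)=\bN_j$ to evaluate the first piece \emph{exactly} as $\widehat\bbeta\cdot\bN$, so no Euler--Maclaurin error arises from that term at all. The $\beps$-dependence is then confined to $\widehat\bbeta\cdot\bN$ and to the Taylor expansion of $\sum_k\log(1-e^{-\widehat P(k)})$, and you observe that these two first-order corrections are both $\sum_j\beps_j\balpha_j$ and cancel. This is cleaner: it needs only $\beps=O(1)$ (a direct byproduct of the proof of Lemma~\ref{lemDiscreteBetas}, as you note), not the precise value of $\beps$ via $\Sigma^{-1}$. The remaining singular Euler--Maclaurin you sketch, with a head--tail split at $K=\delta\sqrt n$ and Stirling, is equivalent to the paper's direct Euler--Maclaurin on $[1,\infty)$ carried out in Lemma~\ref{lem:EM-piece-1}; the paper's version avoids the iterated $n\to\infty$, $\delta\to0$ limit by subtracting the singular part of $h'$ and $h''$ and invoking the closed-form $\int_1^\infty P_2(x)x^{-2}\,dx=\tfrac12\log(2\pi)-\tfrac{11}{12}$, which you may find tidier than tracking the $\log\delta$ cancellations.
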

	
Let $\widehat\bbeta$ be as in Lemma~\ref{lemDiscreteBetas}.  For $x > 0$ let $ f^* (x) =  (\exp\left ( \sum_{j \in J} \bbeta_j x^j\right) - 1)^{-1}$ and for $k \ge 1$ let $\widehat f(k) = (\exp\left( \sum_{j \in J} \widehat\bbeta_j k^j\right) - 1)^{-1} $.  Then from Lemma~\ref{lemContinuousSolution} and Corollary~\ref{corPartitionNumberFormula}, we have 
\begin{align*}
M(\balpha) &= \int _{0}^\infty G( f^* (x)) \, dx \, ,\\
H(\mu_n) &= \sum_{k \ge 1} G(\widehat f(k))  \,.
\end{align*}

We start by relating the parameters $\bbh_j$ to their analogues $\bbeta_j$.  
\begin{lemma}\label{lem:eps}
	Define $\bm \eps$ via $\bbh_j n^{j/2} = \bbeta_j + \beps_j / \sqrt{n}$.  Then $$\beps = \Sigma^{-1}\left(-\frac{\mathbf{v}_{j = \jmin} }{2}\left(\frac{\one_{\jmin = 0}}{e^{\bbeta_0} - 1} + \frac{\one_{\jmin \geq 1}}{\bbeta_{\jmin}} \right)  \right)^T + O(n^{-1/2})$$
	where $\mathbf{v}_{j = \jmin}$ is the vector with $1$  in the $j = \jmin$ coordinate and $0$ in all other coordinates.
\end{lemma}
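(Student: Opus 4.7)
The plan is to substitute $k = x\sqrt n$ in the discrete system~\eqref{eq:geo-discrete} to view it as a Riemann sum approximation of~\eqref{eq:geo-cont} with mesh $h = 1/\sqrt n$, use Euler--Maclaurin to extract the $O(h)$ boundary correction at $x = 0$, linearize the resulting identity in $\bm\eps$, and finally invert $\Sigma$.

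After the substitution and rescaling, equation~\eqref{eq:geo-discrete} divided by $n^{(j+1)/2}$ becomes
\[
\frac{1}{\sqrt n}\sum_{k\ge 1} g_j\!\left(\frac{k}{\sqrt n}\right) = \balpha_j + O\!\left(n^{-(j+1)/2}\right), \quad g_j(x) := \frac{x^j}{\exp\!\bigl(\sum_{i\in J}(\bbeta_i + \eps_i/\sqrt n)x^i\bigr) - 1}.
\]
For each $j \in J$, $g_j$ is smooth on $[0,\infty)$, bounded at $0$, and decays exponentially, so the standard Euler--Maclaurin formula gives
\[
\frac{1}{\sqrt n}\sum_{k\ge 1} g_j\!\left(\frac{k}{\sqrt n}\right) = \int_0^\infty g_j(x)\,dx - \frac{g_j(0)}{2\sqrt n} + O(1/n).
\]
A short case analysis using $\exp(\sum_i\bbeta_i x^i) - 1 \sim \bbeta_{\jmin} x^{\jmin}$ as $x \to 0^+$ when $\jmin \ge 1$ (resp.\ $\to e^{\bbeta_0}-1$ when $\jmin = 0$) shows $g_j(0) = 0$ for $j > \jmin$, while
\[
g_{\jmin}(0) = \frac{\one_{\jmin=0}}{e^{\bbeta_0}-1} + \frac{\one_{\jmin\ge 1}}{\bbeta_{\jmin}} + O(n^{-1/2}),
\]
where the error accounts for the $\bm\eps/\sqrt n$ perturbation.

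To conclude, I Taylor expand the continuous integral around $\bm\eps = 0$. By~\eqref{eq:geo-cont}, $\int_0^\infty g_j\,dx\big|_{\bm\eps=0} = \balpha_j$, and differentiating through (which matches~\eqref{eqSigmaDef}) yields
\[
\frac{\partial}{\partial \eps_i}\int_0^\infty g_j(x)\,dx\bigg|_{\bm\eps = 0} = -\frac{\Sigma_{i,j}}{\sqrt n}.
\]
Substituting this into the Euler--Maclaurin identity above produces
\[
\balpha_j - \frac{1}{\sqrt n}\sum_{i \in J} \Sigma_{i,j}\eps_i - \frac{\one_{j=\jmin}}{2\sqrt n}\!\left(\frac{\one_{\jmin=0}}{e^{\bbeta_0}-1} + \frac{\one_{\jmin\ge 1}}{\bbeta_{\jmin}}\right) = \balpha_j + O(1/n),
\]
which, after multiplying by $\sqrt n$, is the linear system $\Sigma\,\bm\eps = -\tfrac{1}{2}\mathbf{v}_{j=\jmin}(\cdots) + O(n^{-1/2})$. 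Since $\Sigma$ is positive definite (as noted in the proof of Lemma~\ref{lemDiscreteBetas}), inversion yields the stated formula for $\bm\eps$.

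The one subtle point is ensuring the quadratic error from the Taylor expansion and the Euler--Maclaurin remainder are genuinely $O(n^{-1/2})$ rather than merely $o(1)$; this requires $\bm\eps$ to be bounded in $n$. Lemma~\ref{lemDiscreteBetas} only gives $\bm\eps = o(\sqrt n)$, but this weaker bound feeds back into the displayed identity above through a standard bootstrap: the positive-definiteness of $\Sigma$ forces $|\bm\eps| \le C(1 + |\bm\eps|^2/\sqrt n)$, and since $|\bm\eps|/\sqrt n \to 0$ we conclude $\bm\eps = O(1)$, after which the Taylor and Euler--Maclaurin errors are indeed $O(n^{-1/2})$ and the claim follows.
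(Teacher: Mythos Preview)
Your argument is correct and follows essentially the same route as the paper: rescale the discrete system to a Riemann sum, extract the boundary correction at $x=0$ via Euler--Maclaurin, Taylor expand the integral in $\bm\eps$, and invert $\Sigma$. Your final paragraph on the bootstrap (using $\bm\eps = o(\sqrt n)$ from Lemma~\ref{lemDiscreteBetas} together with positive-definiteness of $\Sigma$ to upgrade to $\bm\eps = O(1)$) is a point the paper glosses over when it writes the Taylor remainder as $O(1/n)$, so your version is actually a bit more careful there.
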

\begin{proof}
	First write \begin{align*}
	\balpha_j &= n^{-1/2} \sum_{k \geq 1} \frac{(k n^{-1/2})^j}{\exp\left(\sum_{i \in J} (\bbh_i n^{i/2}) (k/\sqrt{n})^i \right) - 1   }  \\ 
	&= - \frac{\one_{j = \jmin}}{2 \sqrt{n}} \left(\left(\frac{\one_{\jmin = 0}}{e^{\bbh_0} - 1} \right) + \frac{\one_{\jmin \geq 1}}{\bbh_{\jmin} n^{\jmin/2}} \right) + \int_0^\infty \frac{x^j}{\exp\left( \sum_{i \in J} (\bbh_i n^{i/2}) x^i \right) - 1}\,dx + O(1/n)\,.
	\end{align*}
	
	Taylor expanding the integrand at $\bbeta_i$ gives 
	\begin{align*}\int_0^\infty \frac{x^j}{\exp\left( \sum_{i \in J} (\bbh_i n^{i/2}) x^i \right) - 1}\,dx &= \balpha_j - \sum_{i \in J} \frac{\beps_i}{\sqrt{n}} \int_0^\infty x^{i + j} \frac{\exp\left(\sum_{k \in J}(\bbh_k n^{k/2}) x^k  \right)}{\left( \exp\left(\sum_{k \in J}(\bbh_k n^{k/2}) x^k  \right) - 1\right)^2}\,dx + O(1/n) \\
	&= \balpha_j -  \sum_{i \in J}(1 + o(1)) \frac{\beps_i}{\sqrt{n}} \Sigma_{i,j} + O(1/n)
	\end{align*}
	and so \begin{align*}
	\sum_{i \in J} (1 + o(1)) \beps_i \Sigma_{i,j} &= - \frac{\one_{j=\jmin}}{2}\left(\left(\frac{\one_{\jmin = 0}}{e^{\bbh_0} - 1} \right) + \frac{\one_{\jmin \geq 1}}{\bbh_{\jmin} n^{\jmin/2}} \right) + O(n^{-1/2}) \\
	&= - \frac{\one_{j=\jmin}}{2}\left(\left(\frac{\one_{\jmin = 0}}{e^{\bbeta_0} - 1} \right) + \frac{\one_{\jmin \geq 1}}{\bbeta_{\jmin}} \right)+ O(n^{-1/2})\,.
	\end{align*}
	Solving for $\beps$ completes the Lemma.	
\end{proof}

	We need an Euler-Maclaurin summation calculation that we postpone to the Appendix~\ref{secEulerMac}.
	\begin{lemma}\label{lem:EM-asymptotic}
		For $ \bsg \in \R^J$ with $\sum_{j \in J} \bsg_j t^j \geq 0$ for all $t \geq0$ and $\bsg_{\jmin} > 0$,  we have as $t \to 0^+$, 
		\begin{align*}
		\sum_{k \geq 1}  G \left(  \frac{1}{    \exp ( \sum_{j\in J}  \bsg_j (tk)^j    )  -1     } \right) &= t^{-1}\int_0^\infty G  \left(   \frac{1}{    \exp ( \sum_{j\in J}  \bsg_j x^j    )  -1     } \right)\,dx - \frac{\jmin}{2}\log(2\pi/ t) \\
		&- \frac{\one_{\jmin = 0}}{2}G \left(\frac{1}{  e^{\bsg_0} -1}\right) 
		+ \frac{\one_{\jmin \geq 1}}{2}\left( \log \bsg_{\jmin} - 1 \right) + o(1)  
		\end{align*}
		where the error is uniform for $ \bsg$ in a compact set  $K \subset \R^J$ satisfying the hypotheses.

	\end{lemma}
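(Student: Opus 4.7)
The strategy is to apply Euler–Maclaurin summation to the function $h(x) := G\bigl(1/(e^{P(x)} - 1)\bigr)$, where $P(x) := \sum_{j \in J} \bsg_j x^j$, with careful treatment of the behavior of $h$ at $x = 0$. Using the identity $G(\eta) = -\eta\log\eta + (\eta+1)\log(\eta+1)$, one directly computes
\[
h(x) = \frac{P(x)}{e^{P(x)} - 1} - \log\bigl(1 - e^{-P(x)}\bigr),
\]
which is smooth on $(0, \infty)$, decays super-polynomially at infinity, and satisfies the following at $x = 0$: if $\jmin = 0$ then $h$ extends smoothly to $[0, \infty)$ with $h(0) = G(1/(e^{\bsg_0} - 1))$; if $\jmin \geq 1$ then, since $(1 - e^{-P(x)})/x^\jmin \to \bsg_\jmin > 0$ as $x \to 0$, the function $g(x) := h(x) + \jmin \log x$ extends smoothly to $[0, \infty)$ with $g(0) = 1 - \log \bsg_\jmin$. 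In particular $\int_0^\infty h(x)\,dx$ is finite, as the logarithmic singularity at $0$ is integrable.

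When $\jmin = 0$, a direct application of Euler–Maclaurin to the smooth function $h$ on $[0, \infty)$ yields $\sum_{k \geq 1} h(tk) = t^{-1}\int_0^\infty h\,dx - h(0)/2 + O(t)$, matching the claim. When $\jmin \geq 1$, I fix any auxiliary $M > 0$, set $N := \lfloor M/t \rfloor$ and $\delta := M/t - N \in [0,1)$, and split the sum at $N$. For the tail $k > N$, Euler–Maclaurin applied to $h$ on $[M, \infty)$ gives $\sum_{k > N} h(tk) = t^{-1}\int_M^\infty h\,dx + h(M)(\delta - 1/2) + O(t)$. For the head $k \leq N$, decompose $h(tk) = g(tk) - \jmin \log(tk)$: Euler–Maclaurin applied to the smooth $g$ over $[0, M]$ gives $\sum_{k=1}^N g(tk) = t^{-1}\int_0^M g\,dx + g(M)(1/2 - \delta) - g(0)/2 + O(t)$, while Stirling's formula $\log N! = N \log N - N + \tfrac{1}{2}\log(2\pi N) + O(1/N)$, applied to $\sum_{k=1}^N \log(tk) = N \log t + \log N!$ after substituting $tN = M - t\delta$, yields
\[
\sum_{k=1}^N \bigl(-\jmin \log(tk)\bigr) = t^{-1}\int_0^M \bigl(-\jmin \log x\bigr)\,dx + \jmin \delta \log M - \tfrac{\jmin}{2}\log(2\pi M/t) + O(t).
\]

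Summing the four contributions, the $t^{-1}$ integrals assemble into $t^{-1}\int_0^\infty h\,dx$. For the $O(1)$ remainder, the identity $g(M) - h(M) = \jmin \log M$ produces $g(M)(1/2 - \delta) + h(M)(\delta - 1/2) = (1/2 - \delta)\jmin \log M$; combined with $\jmin \delta \log M$ this gives $(\jmin/2)\log M$, which exactly cancels the $\log M$ piece of $-\tfrac{\jmin}{2}\log(2\pi M/t)$. What remains is $-g(0)/2 - \tfrac{\jmin}{2}\log(2\pi/t) + O(t)$; substituting $g(0) = 1 - \log \bsg_\jmin$ recovers the claimed formula. The main obstacle is verifying this delicate simultaneous cancellation of both the fractional-part $\delta = \{M/t\}$ and the auxiliary parameter $M$; without it the asymptotic would oscillate in $t$ or depend on the choice of $M$. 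Uniformity over a compact set $K$ of parameters follows because every error bound reduces to sup-norms of $g$, $h$, and finitely many derivatives on bounded regions, each continuous in $\bsg$ on $K$, together with the uniform Stirling estimate.
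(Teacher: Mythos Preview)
Your argument is correct and follows a genuinely different route from the paper's.  The paper also starts from the identity $G(1/(e^a-1)) = a/(e^a-1) - \log(1-e^{-a})$, but then treats the two summands separately, applying Euler--Maclaurin on $[1,\infty)$ to each.  For the first summand the $P_2$-remainder is $O(t)$ directly, but for the logarithmic summand the $P_2$-remainder does \emph{not} vanish: two dominated-convergence lemmas are used to show it converges to $-\jmin\int_1^\infty P_2(x)/x^2\,dx$, which is then identified (via a Wallis-product/Stirling fact) with the $\log(2\pi)$ constant.

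Your approach instead keeps $h=G(\cdot)$ intact, isolates the logarithmic singularity by writing $h = g - \jmin\log$ with $g$ smooth on $[0,\infty)$, and applies Euler--Maclaurin only to smooth pieces ($g$ on the compact interval $[0,M]$ and $h$ on $[M,\infty)$), so every $P_2$-remainder is $O(t)$ trivially.  The $\log(2\pi)$ constant then appears directly from Stirling applied to $\sum_{k\le N}\log k$.  What you pay for this is the introduction of the auxiliary cutoff $M$ and the fractional part $\delta$, whose simultaneous cancellation you verify; what you gain is that no limiting analysis of the $P_2$-integrals is needed and the final error is $O(t)$ rather than $o(1)$.  Both routes ultimately rest on Stirling, but yours invokes it explicitly while the paper's reaches it through the evaluation $\int_1^\infty P_2(x)/(2x^2)\,dx = \tfrac12\log(2\pi) - \tfrac{11}{12}$.
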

	Using this we compute the asymptotics of $H(\mu_n)$.

	\begin{proof}[Proof of Lemma~\ref{lemEntropyApprox}] 
		Write 
		\begin{align*}
H(\mu_n) &= \sum_{k \geq 1} G \left(  \frac{1}{ \exp\left( \sum_{j \in J} \widehat\bbeta_j k^j\right) - 1} \right) \\
&= \sum_{k \geq 1} G\left( \frac{1}{  \exp\left(\sum_{j \in J} (\widehat \bbeta_j n^{j/2})(k n^{-1/2})^j \right)  -1 } \right)
		\end{align*}

		and apply Lemma \ref{lem:EM-asymptotic} to $\bsg = (\widehat\bbeta_j n^{j/2})_{j \in J}$ with $t = n^{-1/2}$ to obtain
		\begin{align}
		\label{eqHmunId1}
		H(\mu_n) &= \sqrt{n} \int_0^\infty G\left( \frac{1}{  \exp\left(\sum_{j \in J} (\widehat \bbeta_j n^{j/2}) x^j \right) -1}  \right)\,dx - \frac{\jmin}{2}\log(2\pi n^{1/2})  \\ 
		\nonumber
		&\qquad- \frac{\one_{\jmin = 0}}{2} G \left (\frac{1}{  e^{\widehat\bbeta_0} -1} \right)  + \frac{\one_{\jmin \geq 1}}{2}\left( \log (\widehat\bbeta_{\jmin} n^{\jmin/2}) - 1 \right) + o(1)\,.
		\end{align}

Now recall $\widehat \bbeta_j n^{j/2} = \bbeta_j + \bm{\eps}_j n^{-1/2}$, then Lemma \ref{lem:eps} states 
	$$\beps = \Sigma^{-1}\left(-\frac{\mathbf{v}_{j = \jmin} }{2}\left(\frac{\one_{\jmin = 0}}{e^{\bbeta_0} - 1} + \frac{\one_{\jmin \geq 1}}{\bbeta_{\jmin}} \right)  \right)^T + O(n^{-1/2})$$
	where $\mathbf{v}_{j = \jmin} \in \R^J$ is the vector with a $1$ in the $j = \jmin$ coordinate and zeros elsewhere, and $\Sigma$ given in~\eqref{eqSigmaDef}.  Then we have 
		\begin{align}
		\label{eqIntId2}
		 \int_0^\infty G\left( \frac{1}{  \exp\left(\sum_{j \in J} (\widehat \bbeta_j n^{j/2}) x^j \right) -1}  \right)\,dx &=  \int_0^\infty G\left( \frac{1}{  \exp\left(\sum_{j \in J} ( \bbeta_j + \bm{\eps}_j n^{-1/2}) x^j \right) -1}  \right)\,dx     \\
		 &= M(\balpha)- n^{-1/2}(\bm{\eps} {\Sigma} {\bbeta}) + o(n^{-1/2}) \\
		\nonumber
		&= M(\balpha) + \frac{1}{2n^{1/2}}\left(\one_{\jmin = 0}\left(\frac{{\bbeta}_0}{e^{{\bbeta}_0} - 1}  \right)  + \one_{\jmin \geq 1}  \right)+ o(n^{-1/2})\,.
		\end{align}
		Putting~\eqref{eqHmunId1} and~\eqref{eqIntId2} together gives the lemma.
	\end{proof}

	\section{The Local CLT}
	\label{secCLT}
	In the previous two sections, we gave an exact formula~{\eqref{eqPNmu2}} for $p(\mathbf N)$ and computed the asymptotics of its first factor, $e^{H(\mu_n)}$. In this section, we compute the asymptotics of the second factor, $\mu_n(\mathcal P(\mathbf N))$, the probability a random partition generated by the entropy-maximizing distribution $\mu_n$ has profile $\mathbf N$.
	\begin{lemma}
		\label{lemCLTapprox}
		For all $\balpha$ satisfying Assumption~\ref{Assumption1} and $n$-feasible,
		\begin{equation}
		\mu_n (\cP(\mathbf N)) = (1+o(1))  \cdot \frac{|\QQ_J|}{(2\pi)^{|J|/2} \det (\Sigma)^{1/2}}  \cdot n^{-\sum_{j \in J}(j/2 + 1/4) }
		\end{equation}
		as $n \to \infty$,
		where $\mathbf N = (\lfloor \balpha_j n^{(j+1)/2} \rfloor)_{j \in J} $.  Further, the error is uniform for $\balpha$ varying in a compact set of $\R^J$ satisfying Assumption~\ref{Assumption1}.
	\end{lemma}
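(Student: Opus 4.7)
The plan is to apply Fourier inversion for lattice-valued random variables. Let $S = \bigl(\sum_{k \geq 1} X_k k^j\bigr)_{j \in J}$, taking values in the sublattice $L = \NT \subset \Z^{J}$, and let
\[
\phi(\boldsymbol{\theta}) = \E e^{i\boldsymbol{\theta}\cdot S} = \prod_{k \geq 1} \frac{p_k}{1 - (1-p_k)\, e^{i \sum_{j \in J} \theta_j k^j}}.
\]
Since $\bN \in L$ by $n$-feasibility, the integrand $\phi(\boldsymbol{\theta})\, e^{-i \boldsymbol{\theta} \cdot \bN}$ is periodic with respect to the dual lattice $L^*$ of $L$, and Fourier inversion gives
\[
\mu_n(\cP(\bN)) = \frac{1}{(2\pi)^{|J|}} \int_{[-\pi,\pi]^J} \phi(\boldsymbol{\theta})\, e^{-i \boldsymbol{\theta}\cdot \bN}\, d\boldsymbol{\theta}.
\]
By the very definitions of $\QQ_J$ and $\NT$, the cube $[-\pi,\pi]^J$ decomposes into $|\QQ_J|$ translates of a fundamental domain of $L^*$, indexed by representatives of $L^*/(2\pi\Z)^{J}$.

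I split the integration domain into \emph{major arcs} $\mathcal{M}$---small ellipsoidal neighborhoods of each of these $|\QQ_J|$ representatives---and the complementary \emph{minor arcs}. By $L^*$-periodicity, all $|\QQ_J|$ major arcs contribute the same integral, producing a factor $|\QQ_J|$. On the major arc around $\boldsymbol{\theta}=0$, I substitute $\boldsymbol{\theta} = n^{-1/4} D^{-1} \boldsymbol{\eta}$ with $D = \mathrm{diag}(n^{j/2})_{j \in J}$ and Taylor expand $\log \phi$. Euler--Maclaurin arguments of the same flavor as in Section~\ref{secOPT} convert the cumulant sums to integrals: the quadratic term tends to $-\tfrac{1}{2}\boldsymbol{\eta}^T \Sigma \boldsymbol{\eta}$ uniformly on compact sets in $\boldsymbol{\eta}$, the linear term $i\,(\E_{\mu_n} S - \bN)\cdot \boldsymbol{\theta}$ is $o(1)$ because $\bbh$ was chosen in Lemma~\ref{lemDiscreteBetas} precisely so that $\E_{\mu_n}S = \bN$, and cubic and higher-order terms are negligible on this window thanks to uniform moment bounds on the $X_k$. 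The change-of-variables Jacobian contributes $n^{-|J|/4 - (\sum_{j \in J} j)/2}$, and the resulting Gaussian integral $\int e^{-\boldsymbol{\eta}^T \Sigma \boldsymbol{\eta}/2}\,d\boldsymbol{\eta} = (2\pi)^{|J|/2}\det(\Sigma)^{-1/2}$ produces exactly the claimed constant.

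The main obstacle is bounding the minor-arc contribution by $o(n^{-|J|/4 - (\sum_{j \in J} j)/2})$. From the product representation, for $k$ such that $p_k$ is bounded away from $0$ and $1$,
\[
|\phi_k(\boldsymbol{\theta})|^2 \leq \exp\bigl(-c\,(1 - \cos(\textstyle\sum_{j \in J} \theta_j k^j))\bigr),
\]
so $|\phi(\boldsymbol{\theta})| \leq \exp\bigl(-c\sum_{k \leq \sqrt n} (1 - \cos(\sum_j \theta_j k^j))\bigr)$, and the task reduces to showing that $\sum_{j \in J} \theta_j k^j/(2\pi)$ is far from $\Z$ for a positive fraction of $k \in [1,\sqrt{n}]$. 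A naive bound fails because polynomial sequences modulo $1$ can concentrate near rationals, and this is where the quantitative equidistribution theorem of Green--Tao~\cite{green2012quantitative} becomes essential. Their dichotomy says that either $\sum_j \theta_j k^j/(2\pi)$ is well-equidistributed modulo $1$ for $k \leq \sqrt n$---in which case the Riemann sum $\sum_{k \leq \sqrt n}(1 - \cos(\sum_j \theta_j k^j))$ is of order $\sqrt n$ and $|\phi(\boldsymbol{\theta})|$ is super-polynomially small---or $\boldsymbol{\theta}/(2\pi)$ is close to a rational point of small denominator. The defining property of $\QQ_J$ and $L^*$ forces any such rational approximant to lie in $L^*/(2\pi\Z)^{J}$, so choosing the major-arc radius to exceed the Green--Tao approximation scale guarantees that minor-arc $\boldsymbol{\theta}$ are genuinely far from $L^*$, yielding the required decay and completing the proof.
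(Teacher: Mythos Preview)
Your overall strategy matches the paper's: Fourier inversion, major arcs at the points of $\QQ_J$ (which you correctly identify with the dual lattice $L^*$ of $\NT$ modulo $\Z^J$), the factor $|\QQ_J|$ from periodicity, a Gaussian approximation on the central arc after the rescaling $\boldsymbol{\theta}=n^{-1/4}D^{-1}\boldsymbol{\eta}$, and Green--Tao for the minor arcs. The paper carries out precisely these steps in Sections~\ref{sec:minor-arcs}--\ref{subsecV}, and Lemma~\ref{lemCLTapprox} itself is deduced from the resulting local CLT (Theorem~\ref{th:LCLT}) together with the computation $\det S \sim n^{\sum_{j\in J}(j+1/2)}\det\Sigma$.

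There is, however, a real gap in your minor-arc argument. The Green--Tao dichotomy (Theorem~\ref{th:Green-Tao}) does \emph{not} say that $\boldsymbol{\theta}/(2\pi)$ is close to an element of $\QQ_J$; it says only that there is a small integer $k$ with $\|kQ_{\bt}\|_{C^\infty[\sqrt n]}$ small, i.e.\ $Q_{\bt}$ is close to $q/k$ for some integer-valued polynomial $q$. When $k>1$ the polynomial $q/k$ need not itself be integer-valued, so its coefficient vector need not lie in $L^*/\Z^J$. Your assertion that ``the defining property of $\QQ_J$ and $L^*$ forces any such rational approximant to lie in $L^*/(2\pi\Z)^J$'' is therefore not justified. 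The paper handles this as Case~3 of Lemma~\ref{lem:some-C}: if $q/k$ is not integer-valued, a residue argument modulo the common denominator of its coefficients produces two arithmetic progressions on which $q/k$ takes values in distinct cosets of $\Z$, forcing $\|Q_{\bt}(m)\|_{\R/\Z}\ge 1/(2k)$ on a positive density of $m\in[\sqrt n]$; then Lemma~\ref{lem:engine} finishes. Without this step your minor-arc bound does not close.

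A smaller issue: your major-arc analysis hides nontrivial work. You want the major-arc radius to ``exceed the Green--Tao approximation scale'', which is $\asymp n^{-j/2}$ in the $j$th coordinate; but on a window of that size your variable $\boldsymbol{\eta}$ ranges over $|\eta_j|\lesssim n^{1/4}$, and the cubic remainder in the cumulant expansion is \emph{not} $o(1)$ there. The paper therefore inserts an intermediate shell $U\setminus V$ with $V=\{\bt:|t_j|\le(\log n)\,n^{-j/2-1/4}\}$: on $V$ one does the Taylor/Gaussian comparison you describe (Section~\ref{subsecV}), while on $U\setminus V$ one bounds $|\varphi|$ directly via Lemma~\ref{lem:cumulant-bound} and Lemma~\ref{lem:truncate-var} (Section~\ref{subsecU-minus-V}). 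Your phrase ``cubic and higher-order terms are negligible on this window'' is correct on $V$ but not on the full major arc.
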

	
	To prove Lemma~\ref{lemCLTapprox} we prove  a multivariate local central limit theorem.  While the proof of this local central limit theorem can be easily adapted to a broader setting---such as for random variables other than geometrics---we state it for the random variables of interest.  
	\begin{theorem}\label{th:LCLT}  
		Fix a compact set $K \subset \R^{J}$.  Suppose $\{Y_k\}_{k \geq 1}$ are independent geometric random variables with parameters $p_k$ where $p_k = 1 - \exp\left(-\sum_{j \in J} \bbh_j k^j \right)$ and $\bbh = (\bbh_j)_{j \in J}$ with $(\bbh_j n^{j/2})_{j \in J} \in K$ for all $n$.
		Define $$\bX = \left(\sum_{k \geq 1} Y_k k^j\right)_{j \in J}   $$ and set $S= \Cov(\bX)$.  Then \begin{align*}
		\sup_{\ba}\left|\P(\bX= \ba) -  \frac{\one_{\ba \in \NT}|\QQ_J|}{\sqrt{(2\pi)^{|J|}\det(S) } } \exp\left(-\frac{1}{2}(\mathbf{a} - \E \bX)^T (S)^{-1} (\mathbf{a} - \E \bX)\right)  \right| &= o(\det(S)^{-1/2}) \\
		&=o(n^{- \sum_{j \in J}(j/2 + 1/4) })
		\end{align*}
		where the error term depends only on $K$ and $J$.
	\end{theorem}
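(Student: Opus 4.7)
The plan is to prove Theorem \ref{th:LCLT} via the standard Fourier-inversion framework for a multivariate local central limit theorem, splitting $[-\pi,\pi]^J$ into major arcs near the peaks of $|\phi_{\bX}|$ and their complementary minor arcs.  The main obstacle, for which the paper has already flagged the need for the Green--Tao quantitative polynomial equidistribution theorem, will be the minor arc bound.

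First I would apply Fourier inversion: since $\bX \in \Z^J$ almost surely,
\[
\P(\bX = \ba) = (2\pi)^{-|J|} \int_{[-\pi,\pi]^J} e^{-i \ba \cdot \bt}\, \phi_\bX(\bt)\,d\bt,
\]
where $\phi_\bX(\bt) = \prod_{k \ge 1} \phi_{Y_k}(\mathbf{k} \cdot \bt)$ and $\mathbf{k} = (k^j)_{j \in J}$.  Two structural observations then drive the argument.  First, because each $Y_k$ is integer-valued and $\sum_j c_j k^j \in \Z$ for every $\mathbf{c} \in \QQ_J$ and every $k \in \Z$, the function $\phi_\bX$ has $2\pi \mathbf{c}$ as an exact period for each $\mathbf{c} \in \QQ_J$; so $|\phi_\bX|$ has $|\QQ_J|$ equal peaks, one at each point $2\pi\mathbf{c} \in (-\pi,\pi]^J$.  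Second, from the same identities one sees $\bX \in \NT$ almost surely, so we need only handle $\ba \in \NT$; and for such $\ba$ one has $e^{-2\pi i \ba \cdot \mathbf{c}} = 1$ for every $\mathbf{c} \in \QQ_J$.  Combining these observations with the change of variable $\bt \mapsto \bt + 2\pi \mathbf{c}$, the contribution from narrow neighborhoods $B_0 + 2\pi\mathbf{c}$ of the $|\QQ_J|$ peaks equals
\[
|\QQ_J| \cdot (2\pi)^{-|J|} \int_{B_0} e^{-i \ba \cdot \bu}\, \phi_\bX(\bu)\,d\bu.
\]

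The major arc analysis is then routine.  For $\bu \in B_0$, with $B_0$ a box shrinking at the correct rate in each coordinate (so that $\mathbf{k}\cdot\bu \to 0$ uniformly for the $k$ contributing essentially to the variance), Taylor expand $\log \phi_\bX(\bu) = i\, \E\bX \cdot \bu - \tfrac12 \bu^T S \bu + R(\bu)$ and control the cubic remainder by a uniform third-moment bound of the form $|R(\bu)| \le C \sum_{k} \E|Y_k - \E Y_k|^3 |\mathbf{k}\cdot\bu|^3$.  Standard Gaussian tail estimates then give
\[
|\QQ_J| (2\pi)^{-|J|} \int_{B_0} e^{-i \ba \cdot \bu} \phi_\bX(\bu)\,d\bu = (1+o(1))\, \frac{|\QQ_J|}{\sqrt{(2\pi)^{|J|} \det S}} \exp\!\Big( -\tfrac12 (\ba - \E\bX)^T S^{-1} (\ba - \E\bX) \Big),
\]
uniformly in $\ba$, which is the desired main term.

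The hard step is the minor arc bound: one must show $\int_{m} |\phi_{\bX}(\bt)|\,d\bt = o(\det S^{-1/2})$, where $m$ is the complement of the $B_0$-neighborhoods of the peaks $2\pi \mathbf{c}$.  Starting from the elementary geometric-law bound $|\phi_{Y_k}(\theta)|^2 \le \exp(-c\, p_k(1-p_k)\, \|\theta/(2\pi)\|_{\R/\Z}^2)$, the task reduces to a quantitative lower bound on $\sum_{k \asymp \sqrt n} \|\sum_j t_j k^j/(2\pi)\|_{\R/\Z}^2$, uniformly over $\bt \in m$ and over $\bbh$ in the prescribed compact set.  This is precisely the statement that the polynomial sequence $k \mapsto \sum_j (t_j/(2\pi))\, k^j$ is quantitatively equidistributed modulo one whenever the coefficient vector lies far from every $\mathbf{c} \in \QQ_J$, which is exactly what Green and Tao's quantitative polynomial equidistribution theorem \cite{green2012quantitative} provides, with rates uniform in the coefficient vector and depending only on the degree $|J|$.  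Substituting this bound into the product and summing over $k$ up to order $\sqrt n$ yields a super-polynomially small upper bound on $|\phi_\bX(\bt)|$ uniform on $m$, which comfortably beats the required $\det(S)^{-1/2}$ threshold and completes the proof.
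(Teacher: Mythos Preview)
Your overall plan matches the paper's: Fourier inversion, identification of the $|\QQ_J|$ major arcs via the periodicity coming from integer-valued polynomials, Green--Tao for the minor arcs, and a Gaussian comparison near zero.

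There is, however, a genuine gap in how you split major from minor arcs. You take a single box $B_0$ small enough that the cubic Taylor remainder is $o(1)$; this forces side lengths of order $n^{-j/2-1/4}$ (up to logs). But Green--Tao does \emph{not} control the complement of such small boxes. On the intermediate shell $U\setminus B_0$, with $U=\{|t_j|\le\delta n^{-j/2}\}$, the polynomial $k\mapsto\sum_j t_jk^j$ is close to the zero polynomial, so Green--Tao places you in the \emph{structured} alternative, not the equidistributed one; yet $\bt^T S \bt$ can be as large as $c\delta^2\sqrt n$, so your Taylor bound is also useless there. The paper resolves this with a two-scale structure: Green--Tao is invoked only to reach the large box $U$ (Lemma~\ref{lem:R_n}), and the shell $U\setminus V$ is handled separately (Section~\ref{subsecU-minus-V}) by showing directly that both $|\varphi|$ and the Gaussian are at most $e^{-c(\log n)^2}$ there, via a cumulant bound restricted to the geometrics with parameter bounded away from zero (Lemma~\ref{lem:cumulant-bound}) together with a truncation showing those terms already carry a fixed fraction of the variance (Lemma~\ref{lem:truncate-var}). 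A smaller imprecision: Green--Tao's dichotomy yields ``close to a polynomial with small-denominator rational coefficients'', not directly ``close to an element of $\QQ_J$''; the paper needs the extra $k>1$ case in the proof of Lemma~\ref{lem:some-C} to dispose of the rational-but-not-integer-valued possibility.
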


	Before discussing the proof of this theorem, we will attempt to give some intuition about the statement itself. First, we may think of $\mathbf X$ as the profile of a random partition sampled from the maximum entropy distribution $\mu_n$ (where $Y_k$ is the number of times $k$ appears as a part in the random partition). Our goal in this section is to estimate the probability that $\mathbf X$ is equal to its mean, $\E \mathbf X = \mathbf N$ (Lemma~\ref{lemCLTapprox}). In fact, the theorem above estimates the probability mass function $\P(\mathbf X = \mathbf a)$ for \textit{each} possible profile $\mathbf a$, and not only for $\mathbf a = \mathbf N$ (although the multiplicative error given by this estimate is large when $\mathbf a$ is far from $\mathbf N$). We recover Lemma~\ref{lemCLTapprox} by taking~$\mathbf a = \mathbf N$ and noting $\det (S) \sim n^{\sum_{j \in J}(j/2 + 1/4)} \det (\Sigma)$ as $n \to \infty$, as detailed below. 

	The conclusion of the theorem approximates the probability mass function $\P(\mathbf X = \mathbf a)$ in terms of the density of a multivariate Gaussian. Indeed, the expression above is very nearly the density of a $|J|$-dimensional Gaussian with mean $\E \bX$ and covariance matrix $S$; the only difference is the factor $\one_{\ba \in \NT} |\QQ_J|$.

	We now explain the factor $\one_{\ba \in \NT} |\QQ_J|$.  The random variable $\bX$ is constrained by  number-theoretic identities.  For instance, for any integer $k$  we have $k^2 \equiv k \mod 2$ and so if $1, 2 \in J$ then we must have that $X_1 \equiv X_2 \mod 2$.  The set $\NT$  describes these constraints. While the vector $\bX$ is close to a Gaussian when centered and scaled, its support is a subset of this smaller set $\NT$; this means that the atoms of $\bX$ must be assigned larger probability by some quantity roughly reflecting the density of the set $\NT$ in $\Z^J$.  This density is precisely $1/|\QQ_J|$, thus giving the factor $|\QQ_J|$.

	We  will show that the rescaled random variable defined via $$\widehat{\bX} = \left(n^{-j/2-1/4} \sum_{k\geq 1} (Y_k - \E Y_k)k^j \right)_{j \in J}$$
	converges in distribution to a centered multivariate Gaussian provided we have $(\bbeta_j n^{j/2})_{j \in J} \to \balpha$.  This is a consequence of \eqref{eq:V}.
	
	Before proving Theorem~\ref{th:LCLT}, we show that Lemma~\ref{lemCLTapprox} follows from Theorem~\ref{th:LCLT}.
	
	\begin{proof}[Proof of Lemma \ref{lemCLTapprox}]
		Apply Theorem \ref{th:LCLT} with the $\bbh$ guaranteed by Lemma~\ref{lemDiscreteBetas}.  By hypothesis, we  have  $\bN = \E \bX$.  Taking $\ba = \bN$ then yields $$\mu_n(\mathcal{P}(\bN)) = \P(\bX = \bN) = (1 + o(1)) \frac{|\QQ_J|}{(2\pi)^{|J|/2} \det S^{1/2} }\,.$$
	since $\balpha$ is $n$-feasible.  Recalling the definition of $\Sigma$ from \eqref{eqSigmaDef} and that $\bbh_j n^{j/2} \to \bbeta_j$ for each $j$, the computation \begin{align*} 
	S_{i,j} &= \sum_{k \geq 0} k^{i+j} \frac{\exp\left(\sum_{\ell \in J} \bbh_\ell k^\ell \right)  }{\left(\exp\left(\sum_{\ell \in J} \bbh_\ell k^\ell \right)-1 \right)^2} \\
	&= n^{(i + j + 1)/2}	\left( \frac{1}{\sqrt{n}}\sum_{k \geq 0} (k/\sqrt{n})^{i+j} \frac{\exp\left(\sum_{\ell \in J} \bbh_\ell n^{\ell/2} (k/\sqrt{n})^\ell \right)  }{\left(\exp\left(\sum_{\ell \in J} \bbh_\ell n^{\ell/2} (k/\sqrt{n})^\ell \right)-1 \right)^2}\right) \\
	&= n^{(i + j + 1)/2} \Sigma_{i,j}	(1 + o(1)) 
	\end{align*}	
	shows $\det S =n^{\sum_{j \in J}(j + 1/2)} \det \Sigma  (1 + o(1)) $, which completes the proof.
	\end{proof}

	\subsection{Outline and Preliminaries}
	Theorem \ref{th:LCLT} is proved  via Fourier analysis.  Specifically, for $\bt \in \R^J$ and  $x \in \R$, define $Q_{\bt}(x) := \sum_{j \in J} t_j x^j$ and set $\varphi_k(t) := \E e^{i t Y_k}$.  
	
	The characteristic function $\varphi(\bt) := \E e^{i \langle \bt, \bX \rangle}$ may then be written as \begin{equation}\label{eq:product}
	\varphi(2\pi \bt) = \prod_{k \geq 1} \varphi_k(2\pi Q_{\bt}(k))\,.
	\end{equation}
	Fourier inversion (e.g.~\cite[Theorem XV.3.4]{feller}) gives \begin{equation}\label{eq:prob-inversion}
	\P(\bX = \ba) = \int_{(-1/2,1/2]^{J}}\varphi(2\pi\bt)e^{-i \langle 2\pi \bt,\ba \rangle}\,d\bt\,. 
	\end{equation}
	
	Theorem \ref{th:LCLT} will be proven by analyzing the integral in \eqref{eq:prob-inversion}.  Our method is a variant on the Hardy-Littlewood circle method.  The set $(-1/2,1/2]^J$ is broken up into \emph{major} and \emph{minor} arcs: the major arcs are sets of small volume that contribute the bulk of the mass of the integral in \eqref{eq:prob-inversion} while the minor arcs are the rest.  
	
	When proving a local central limit theorem, it is often the case that the only major arc is a neighborhood of $\bt = 0$.  In our problem, however, this does not occur; the number-theoretic obstructions force $\P(\bX = \ba) = 0$ for $\ba \notin \NT$.
	On the Fourier side, this means that there are multiple major arcs, and in the case of $\ba \notin \NT$, the integral over them cancels out.  
	
	Before beginning the analysis, we give an outline of the proof of Theorem \ref{th:LCLT}: the general flow is that the size of the set we are integrating over decreases as the proof goes on. First, for each $\delta > 0$, we define the following small neighborhood around 0: $$U = U(\delta) = \{\bt \in (-1/2,1/2]^J : |t_j| \leq \delta n^{-j/2} \text{ for all }j \in J \}.$$  We then define $$R(\delta) = (-1/2,1/2]^J \setminus \left(\bigcup_{q \in \QQ_J} (q + U) \right),$$
	where for each polynomial $q(z) \in \sum q_j z^j \in \QQ_J$, we interpret $q = (q_j)_{j \in J}$ as a vector in $(-1/2,1/2]^J$.  We show that for each $R(\delta)$, the integral on $R(\delta)$ is exponentially small.  In the language of the circle method, this states that the major arcs are exactly the neighborhoods $q + U$ for $q \in \QQ_J$. This is carried out in Section~{\ref{sec:minor-arcs}}, and a more detailed summary is given there, including an intuitive description of how the polynomials in $\QQ_J$ arise.

	Next, in Section \ref{sec:combined} we show that the integral over each neighborhood $q + U$ is equal provided $\ba \in \NT$, and so we may combine the $|\QQ_J|$-many integrals over $\bigcup_{q \in \QQ_J} (q+ U)$ into $|\QQ_J|$ times the integral over $U$.   
	
	Section \ref{subsecApproxOverU} compares the integral of our characteristic function over $U$ to the integral over $\R^d$ of the characteristic function of the corresponding Gaussian by showing upper and lower bounds on the matrix $S$ when viewed as a quadratic form (Lemma \ref{lem:Sigma-scale}).
	
	Section \ref{subsecU-minus-V} reduces our integral even further to the set $V = \{\bt : |t_j| \leq (\log n)n^{-j/2 - 1/4}\}$ by comparing the characteristic function of each geometric variable of bounded mean to the characteristic function of the corresponding Gaussian (Lemma \ref{lem:cumulant-bound}); while not all of our geometric variables have bounded mean, the bulk of the contribution to the covariance matrix $S$ comes from the parameters $Y_k$ of bounded mean (Lemma \ref{lem:truncate-var}), which is sufficient for this purpose.
	
	Finally, {\ref{subsecV}} evaluates the integral over $V$; this is equivalent to an ordinary multivariate central limit theorem with more careful tracking of error terms.

	\subsection{Bounding the minor arcs: reducing to a neighborhood of $\QQ_J$} \label{sec:minor-arcs}
	
	Recall that our goal is to estimate the following integral
	
	\begin{equation*}
		\P(\bX = \ba) = \int_{(-1/2,1/2]^{J}}\Bigg(\prod_{k \geq 1} \varphi_k(2\pi Q_{\bt}(k))\Bigg)\,e^{-i \langle 2\pi \bt,\ba \rangle}\,d\bt
	\end{equation*}
    (obtained by combining \eqref{eq:product} and \eqref{eq:prob-inversion}).	To help identify the major arcs, we will see in Lemma \ref{lem:char-bound} we have that if $ t$ is not close to an integer, then $|\varphi_k(2\pi t)|$ is uniformly bounded away from $1$.  This shows that if there are many integer values $k$ for which the polynomial $Q_{\bt}(k)$ is not near an integer, then the integrand above is small.
	
	This means that we have to understand when the polynomial $Q_{\bt}(x)  = \sum_{j \in J} x^{j} t_j$ is close to an integer-valued polynomial, i.e.\ a polynomial $q$ so that $q(\Z) \subset \Z$.  Perhaps surprisingly, there are many such polynomials, even if we omit those with integer coefficients; as an example, the binomial coefficients $\binom{z}{k}$ are integer-valued polynomials but do not have integer coefficients.  Motivated by this, for a given $J$, recall that we define 
	$$\QQ_J = \left\{ \sum_{j \in J} t_j x^j :  \sum_{j \in J} t_j m^j \in \Z \text{ for all }m \in \Z, t_j \in (-1/2,1/2]  \right\} $$ 
	to be the set of integer-valued polynomials of interest.
	
	P\'olya \cite{polya1915ganzwertige} showed that all integer-valued polynomials are integer linear combinations of binomial coefficients; this may be proved by induction on the degree and examining finite differences.  This shows, for instance, that $|\QQ_J|$ is finite.  To better understand $\QQ_J$, we look at an extreme case: \begin{lemma} \label{lem:qd-count}
		For each $d \geq 1$, $|\QQ_{[d]}| = \prod_{j = 1}^d (j!)$.
	\end{lemma}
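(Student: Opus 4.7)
The plan is to interpret $\QQ_{[d]}$ as a set of coset representatives for the quotient of two $\Z$-lattices of integer-valued polynomials, and then compute the index of the sublattice by expressing one basis in terms of the other.

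First I would use P\'olya's theorem (invoked just before the lemma): every integer-valued polynomial of degree at most $d$ can be written uniquely as a $\Z$-linear combination $\sum_{k=0}^d c_k \binom{x}{k}$. Let $L = \{p \in \Q[x] : \deg p \le d,\ p(\Z) \subset \Z,\ p(0) = 0\}$. Since $\binom{0}{0} = 1$ and $\binom{0}{k} = 0$ for $k \ge 1$, the vanishing condition $p(0) = 0$ forces $c_0 = 0$, so $L$ is a free $\Z$-module of rank $d$ with basis $\binom{x}{1}, \binom{x}{2}, \ldots, \binom{x}{d}$. Let $L' = \Z x \oplus \Z x^2 \oplus \cdots \oplus \Z x^d$, the sublattice of polynomials with integer coefficients and zero constant term.

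Next I would observe that the set $\QQ_{[d]}$ is in bijection with the finite abelian group $L/L'$: for every coset $p + L'$ there is exactly one representative whose coefficient of $x^j$ lies in $(-1/2, 1/2]$ for each $j = 1, \ldots, d$, obtained by reducing each coefficient modulo $1$ (the half-open interval handles ties). Under this bijection, $|\QQ_{[d]}| = [L : L']$, which is the absolute value of the determinant of the change-of-basis matrix from $\{x, x^2, \ldots, x^d\}$ to $\{\binom{x}{1}, \ldots, \binom{x}{d}\}$.

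To compute the index, I would use the Stirling-number identity $x^j = \sum_{k=1}^{j} S(j,k)\, k!\, \binom{x}{k}$, which says the change-of-basis matrix is upper triangular with diagonal entries $1!, 2!, \ldots, d!$ (since $S(j,j) = 1$). Thus $[L : L'] = \prod_{j=1}^d j!$, giving the claim. No real obstacle is expected here; the only subtlety is verifying that the half-open interval $(-1/2, 1/2]$ gives a clean fundamental domain for $L/L'$ coordinate by coordinate, which is immediate from unique representation in the basis $\binom{x}{1}, \ldots, \binom{x}{d}$.
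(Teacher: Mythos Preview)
Your proof is correct and takes essentially the same approach as the paper: identify $\QQ_{[d]}$ with the quotient of the lattice of integer-valued polynomials (with P\'olya basis $\binom{x}{k}$) by the sublattice of integer-coefficient polynomials, then count using the triangular change of basis to monomials with diagonal entries $1!, 2!, \ldots, d!$. The paper phrases the last step as directly enumerating coset representatives $\sum_{j=1}^d m_j \binom{z}{j}$ with $1 \le m_j \le j!$ rather than invoking the index-equals-determinant formula, but this is only a cosmetic difference.
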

	\begin{proof}
		Since every element of $\R/\Z$ has a unique representative in $(-1/2,1/2]$, it is sufficient to count the number of polynomials of degree at most $d$ in $\R/\Z$ that are integer-valued.  Each such equivalent class is equal to $$\sum_{j = 1}^d m_j \binom{z}{j}$$ for some choice of integers $m_j$.  For each selection of $(m_1,\ldots,m_d)$ with $1 \leq m_j \leq j!$ we obtain a distinct representative. Conversely, each integer polynomial may be written in the above form for \emph{some} integers $(m_j)$;  further, for two integers $n_1$ and $n_2$, we have $n_1 \binom{z}{j} \equiv n_2 \binom{z}{j}$ as polynomials with coefficients in $\R/\Z$ if and only if $n_1 \equiv n_2 \mod j!$.  This means that we may uniquely choose the representatives $(m_j)$ to satisfy $1 \leq m_j \leq j!$ for each $j$.
	\end{proof}
	
	Now, recall our definitions of $U$ and $R$: 
	for $\delta > 0$, define $U = U(\delta) = \{\bt \in (-1/2,1/2]^{J} : |t_j| \leq \delta n^{-j/2} \text{ for all }j \in J \}$ and 
	 $$R(\delta) = (-1/2,1/2]^{J} \setminus \left(\bigcup_{q \in \QQ_J} (q + U) \right)\,.$$
	Thus $R(\delta)$ is the set of points that are far from the coefficients of any integer-valued polynomial. Our goal for this section is to show that the contribution of $R(\delta)$ is negligible for \emph{any} choice of $\delta > 0$.  
	 
	 \begin{lemma}\label{lem:R_n} In the context of Theorem \ref{th:LCLT}, for each $\delta > 0$ there exists a constant $c = c(\delta,K)$ so that $$\int_{R(\delta)}| \varphi(2\pi \bt) | \,d\mathbf{t}   = O( e^{-c\sqrt{n}} )\,. $$
	 \end{lemma}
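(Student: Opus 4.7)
The plan is to prove a pointwise upper bound $|\varphi(2\pi\bt)| \leq e^{-c\sqrt{n}}$ for every $\bt \in R(\delta)$; since $R(\delta) \subset (-1/2,1/2]^J$ has bounded volume, integrating then immediately yields the lemma. From \eqref{eq:product} and the trivial inequality $|\varphi_k| \leq 1$, it suffices to bound $\prod_{k \in I}|\varphi_k(2\pi Q_{\bt}(k))|$ over any subset $I \subseteq \N$ of our choice.

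The first step is an elementary pointwise bound on each factor. A direct calculation using $\varphi_k(t) = p_k/(1-(1-p_k)e^{it})$ together with $1 - \cos(2\pi\theta) \geq 8\|\theta\|_{\R/\Z}^2$ gives
\[
|\varphi_k(2\pi\theta)|^2 = \frac{p_k^2}{p_k^2 + 2(1-p_k)(1-\cos(2\pi\theta))} \leq 1 - C(p_k)\,\|\theta\|_{\R/\Z}^2,
\]
where $C(p_k)$ is uniformly bounded below whenever $p_k$ is bounded away from $0$ and $1$. Since $(\bbh_j n^{j/2})_{j \in J}$ lies in the compact set $K$, the substitution $k = y\sqrt{n}$ writes $\sum_{j\in J}\bbh_j k^j = \sum_{j \in J}(\bbh_j n^{j/2}) y^j$ as a polynomial in $y$ with uniformly bounded coefficients that is strictly positive on $(0,\infty)$; hence there exist $A_1 < A_2$ and $p_0>0$, depending only on $K$, such that for $k \in I_n := [A_1 \sqrt n, A_2 \sqrt n] \cap \N$ one has $p_k \in [p_0, 1-p_0]$. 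This reduces the task to proving
\[
\sum_{k \in I_n} \|Q_{\bt}(k)\|_{\R/\Z}^2 \geq c_2 \sqrt n \qquad \text{uniformly in } \bt \in R(\delta).
\]

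The heart of the matter is this equidistribution lower bound, where the quantitative equidistribution theorem of Green and Tao \cite{green2012quantitative} enters. By definition, $\bt \in R(\delta)$ means $Q_{\bt}$ differs in at least one coefficient from every $q \in \QQ_J$ by more than $\delta n^{-j/2}$; after rescaling $k = y\sqrt{n}$, the polynomial $\tilde Q(y) := Q_{\bt}(y\sqrt{n}) = \sum_{j \in J}(t_j n^{j/2}) y^j$ on $y \in [A_1, A_2]$ has, for every integer-valued polynomial $q$, at least one coefficient of $\tilde Q - q$ of magnitude exceeding $\delta$. The Green--Tao theorem, applied to the polynomial sequence $(\tilde Q(m/\sqrt{n}))_{m \in I_n}$ of length $N \asymp \sqrt{n}$, implies that either this sequence is $\epsilon$-equidistributed modulo $1$ or its coefficients admit rational approximations of small denominator. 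By P\'olya's classification of integer-valued polynomials in the binomial basis (cf.\ Lemma~\ref{lem:qd-count}), any such rational approximation, after subtracting a polynomial with integer coefficients, lies within $\QQ_J$, contradicting $\bt \in R(\delta)$. The equidistribution alternative then yields the required lower bound on $\sum \|Q_{\bt}(k)\|_{\R/\Z}^2$.

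The main obstacle will be the last step: carefully matching the conclusion of Green--Tao, which produces rational approximations to the coefficients (with explicit dependence on the length $N \asymp \sqrt{n}$ and on $\delta$), to the $n^{-j/2}$-scaled condition defining $R(\delta)$, and handling the fact that integer-valued polynomials need not have integer coefficients (e.g.\ $\binom{x}{2}$) so that the rational approximation must be relocated into $\QQ_J$ via P\'olya's theorem. The uniformity of the constant $c = c(\delta, K)$ then follows from compactness of $K$ and the quantitative nature of the Green--Tao estimate in the parameter $\delta$.
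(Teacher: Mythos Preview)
Your overall architecture---pointwise bound on $|\varphi(2\pi\bt)|$ via factors $|\varphi_k|$ over $k\asymp\sqrt{n}$, followed by Green--Tao to control $\|Q_{\bt}(k)\|_{\R/\Z}$---is exactly the paper's approach. However, your final paragraph hand-waves past two genuine difficulties that the paper treats carefully, and your proposed resolution of each is not correct as stated.

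First, the Green--Tao dichotomy does not say the structured alternative is ``close to an element of $\QQ_J$.'' It says there is an integer $k$ with $1\le k\ll\epsilon^{-O_d(1)}$ such that $\|kQ_{\bt}\|_{C^\infty[N]}$ is small, i.e.\ $Q_{\bt}$ is close to $q/k$ for some integer-valued $q$. When $k>1$, the polynomial $q/k$ need \emph{not} be integer-valued, so it cannot be ``relocated into $\QQ_J$ via P\'olya's theorem'' (P\'olya's theorem classifies integer-valued polynomials, not polynomials with rational coefficients in general). The paper disposes of this case by a separate argument (Case~3 of Lemma~\ref{lem:some-C}): if $q/k$ is not integer-valued, one finds two residue classes modulo the common denominator on which $q/k$ takes distinct values modulo $\Z$, forcing $\|Q_{\bt}(m)\|_{\R/\Z}$ to be bounded below on a positive-density set. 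Your sketch contains no mechanism for this.

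Second, even in the $k=1$ case, the scales do not match. Applying Green--Tao with a fixed equidistribution parameter $\epsilon$ yields closeness of $Q_{\bt}$ to some $q\in\QQ_J$ only at scale $C(\epsilon)\,n^{-j/2}$, where $C(\epsilon)$ is a \emph{large} constant (growing as $\epsilon\to 0$). This contradicts $\bt\in R(C(\epsilon))$, not $\bt\in R(\delta)$ for small $\delta$. Your claim that ``uniformity\dots follows from the quantitative nature of the Green--Tao estimate in the parameter $\delta$'' gets the dependence backwards. The paper's fix is a two-stage argument: Lemma~\ref{lem:some-C} first establishes the bound on $R(C)$ for some large $C$ via Green--Tao, and then the proof of Lemma~\ref{lem:R_n} handles the annulus $R(\delta)\setminus R(C)$ by a direct compactness argument on the rescaled polynomials (no further appeal to Green--Tao). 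This second stage is entirely absent from your proposal.
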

	 
	 Lemma \ref{lem:R_n} will be accomplished by a pointwise bound on $|\varphi(2\pi\bt)|$ on the set $R(\delta)$.   In light of the infinite product in \eqref{eq:product}, in order to show that $|\varphi(2\pi \bt)|$ is exponentially small in $\sqrt{n}$, it is enough to show that on the order of $\sqrt{n}$ many $|\varphi_k(2\pi Q_{\bt}(k))|$ are uniformly less than $1$.  The following elementary fact is a step in this direction.  For a real number $t \in \R$ define $\| t \|_{\R/\Z} = \min_{z \in \Z}|t - z|$ to be the distance to the nearest integer.  We want to show that for if $\| t \|_{\R/\Z}$ is large, then $| \E e^{i t Y}|$ is bounded away from $1$.
	\begin{lemma}	\label{lem:char-bound}
		Fix $\eps > 0$, and let $p \in [\eps,1-\eps]$.  Suppose $Y$ is
		a geometric variable with parameter $p$.  Then for each $\delta_1 > 0$ there exists a $\delta_2 > 0$ so that if $\| t \|_{\R/\Z} \geq \delta_1$ then $| \E e^{i 2 \pi t Y} | \leq 1 - \delta_2$.
	\end{lemma}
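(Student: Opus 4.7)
The plan is to prove this by direct computation, since a geometric random variable has an explicit closed-form characteristic function. First I would write
\[
\varphi_Y(s) \;=\; \E e^{isY} \;=\; \sum_{k \geq 0} p(1-p)^k e^{isk} \;=\; \frac{p}{1 - (1-p)e^{is}},
\]
valid because $p>0$ so the geometric series converges. Taking moduli squared gives
\[
|\varphi_Y(s)|^2 \;=\; \frac{p^2}{|1 - (1-p)e^{is}|^2} \;=\; \frac{p^2}{1 - 2(1-p)\cos s + (1-p)^2}.
\]

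Setting $s = 2\pi t$, I would then regroup the denominator as $p^2 + 2(1-p)(1 - \cos(2\pi t))$, which leads to the clean identity
\[
|\varphi_Y(2\pi t)|^2 \;=\; \frac{1}{1 + \tfrac{2(1-p)}{p^2}\bigl(1 - \cos(2\pi t)\bigr)}.
\]
This makes the desired conclusion transparent: the right-hand side is $1$ exactly when $\cos(2\pi t)=1$, i.e.\ when $t \in \Z$, and strictly less than $1$ otherwise. Since $\cos(2\pi t)$ depends only on $t \bmod 1$, we may reduce to $t \in (-1/2, 1/2]$ so that $\|t\|_{\R/\Z} = |t|$, and the hypothesis $\|t\|_{\R/\Z} \ge \delta_1$ yields (taking WLOG $\delta_1 \leq 1/2$) the uniform bound $1 - \cos(2\pi t) \geq 1 - \cos(2\pi\delta_1) > 0$.

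Combining this with $p \in [\eps, 1-\eps]$, which implies $(1-p)/p^2 \geq \eps/(1-\eps)^2$, I get
\[
|\varphi_Y(2\pi t)|^2 \;\leq\; \frac{1}{1 + C(\eps, \delta_1)}, \qquad C(\eps,\delta_1) := \frac{2\eps\bigl(1 - \cos(2\pi\delta_1)\bigr)}{(1-\eps)^2} \;>\; 0,
\]
so the bound $|\varphi_Y(2\pi t)| \leq 1 - \delta_2$ holds with $\delta_2 := 1 - (1+C(\eps,\delta_1))^{-1/2} > 0$, a constant depending only on $\eps$ and $\delta_1$. There is no real obstacle here; the entire proof is a one-paragraph calculation. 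The only mild subtlety is confirming that the bound is uniform in $p$ across the compact interval $[\eps, 1-\eps]$, but this is immediate from the explicit form of $C(\eps, \delta_1)$.
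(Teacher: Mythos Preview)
Your proof is correct. The explicit formula for the geometric characteristic function, the regrouping of the denominator as $p^2 + 2(1-p)(1-\cos(2\pi t))$, and the subsequent uniform bounds in $p$ and $t$ are all fine.

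The paper's proof takes a different, softer route: it simply observes that $|\E e^{i2\pi t Y}|$ is jointly continuous in $(p,t)$, equals $1$ only when $t \in \Z$, and hence attains a maximum strictly below $1$ on the compact set $[\eps,1-\eps] \times \{t : \delta_1 \le |t| \le 1/2\}$. Your explicit computation buys a concrete value of $\delta_2$ in terms of $\eps$ and $\delta_1$, which is more informative and would be useful if one ever wanted to make the later estimates in the local CLT quantitative. The paper's compactness argument, on the other hand, is shorter and would apply verbatim to other families of integer-valued laws (e.g.\ Bernoulli, truncated geometrics) without recomputing the characteristic function, which is relevant for the extensions discussed in Section~\ref{secDistinctPart}.
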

	\begin{proof}
		Since $Y$ is integer-valued, we may assume without loss of generality that $t \in (-1/2,1/2]$.  In each case we may uniformly bound the modulus of the characteristic function using compactness and continuity.
	\end{proof}
	
	From Lemma \ref{lem:char-bound} we extract the following simple consequence, which is the engine behind the proof of Lemma \ref{lem:R_n}.
	
	\begin{lemma} \label{lem:engine}
		In the context of Theorem \ref{th:LCLT}, for each $\eps_1, \eps_2 > 0$ there exists a $c > 0$ so that the following holds:
		if $\bt \in \R^J$ satisfies $$\frac{|\{ m \in [\sqrt{n}] : \|Q_{\bt}(m)\|_{\R/\Z} \geq \eps_1   \}|}{\sqrt{n}} \geq \eps_2$$
		then $$|\varphi(2\pi \bt)| \leq e^{-c \sqrt{n}}$$ where $c$ may be chosen uniformly depending only on $K$, $\eps_1$ and $\eps_2$.
	\end{lemma}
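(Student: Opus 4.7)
The plan is to derive the claimed exponential bound on $|\varphi(2\pi\bt)|$ by a direct pointwise estimate on the product representation~\eqref{eq:product}. The goal is to identify a large collection of indices $m$ for which the geometric factor $|\varphi_m(2\pi Q_\bt(m))|$ is uniformly bounded below $1$, and then multiply these bounds.

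First, I will pin down a range of $m$ on which the parameters $p_m$ live in a compact subset of $(0,1)$ that depends only on $K$. Substituting $x = m/\sqrt{n}$, we have $\sum_{j \in J}\bbh_j m^j = P(x)$, where $P$ is a polynomial whose coefficient vector $(\bbh_j n^{j/2})_{j \in J}$ lies in the compact set $K$ and which must be positive on $(0,\infty)$ in order for each $p_k$ to lie in $(0,1)$. Thus for any fixed $a \in (0,1)$, the continuous function $P(x)$ attains a positive minimum and a finite maximum on the compact set $K \times [a,1]$, so $p_m = 1 - e^{-P(m/\sqrt{n})}$ lies in a compact subset $[\eps,1-\eps] \subset (0,1)$ for every $m$ with $a\sqrt{n} \leq m \leq \sqrt{n}$, with $\eps = \eps(K,a) > 0$.

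Next I will use the combinatorial hypothesis. Let $S = \{m \in [\sqrt{n}] : \|Q_\bt(m)\|_{\R/\Z} \geq \eps_1\}$, so $|S| \geq \eps_2 \sqrt{n}$. Setting $a = \eps_2/2$, the interval $[a\sqrt{n},\sqrt{n}]$ omits only at most $a\sqrt{n} + O(1) = (\eps_2/2)\sqrt{n} + O(1)$ integers from $[\sqrt{n}]$, so for $n$ large, $|S \cap [a\sqrt{n},\sqrt{n}]| \geq (\eps_2/4)\sqrt{n}$. For each such $m$, Lemma~\ref{lem:char-bound}, applied with $\delta_1 = \eps_1$ and with $p_m \in [\eps,1-\eps]$ as above, yields a $\delta_2 = \delta_2(\eps_1, K) > 0$ with $|\varphi_m(2\pi Q_\bt(m))| \leq 1 - \delta_2$.

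Finally, I will assemble the pointwise bound. Since every factor in \eqref{eq:product} satisfies $|\varphi_k(\cdot)| \leq 1$, we may bound the infinite product by restricting to $m \in S \cap [a\sqrt{n},\sqrt{n}]$:
\[
|\varphi(2\pi\bt)| \leq (1-\delta_2)^{(\eps_2/4)\sqrt{n}} \leq e^{-c\sqrt{n}},
\]
with $c = (\eps_2/4)\log(1/(1-\delta_2)) > 0$, and $c$ depending only on $K$, $\eps_1$, $\eps_2$. There is no substantive obstacle here; the only care needed is in verifying that the "compact subset of $(0,1)$" containing the $p_m$'s can be taken uniformly in $\bbh$ and $n$, which is the role of the interval truncation $[a\sqrt{n},\sqrt{n}]$ combined with the positivity of $P(x)$ on $(0,\infty)$ for every $P$ with coefficient vector in $K$.
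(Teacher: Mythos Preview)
Your proof is correct and follows essentially the same approach as the paper: restrict to indices $m$ in $[\eps_2\sqrt{n}/2,\sqrt{n}]$ so that the geometric parameters $p_m$ lie in a fixed compact subinterval of $(0,1)$ depending only on $K$, note that at least $\Theta(\eps_2\sqrt{n})$ of the ``bad'' indices survive this truncation, apply Lemma~\ref{lem:char-bound} to each, and multiply. Your justification of the uniform control on $p_m$ via the continuous map $(\bsg,x)\mapsto \sum_j \bsg_j x^j$ on $K\times[a,1]$ is in fact more detailed than the paper's one-line assertion of the same fact.
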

	\begin{proof}
		There exists an $\eps_3 > 0$ so that for $k \in [\eps_2 \sqrt{n}/2, \sqrt{n}]$, the geometric parameters $p_k$ lie in the interval $[\eps_3,1 - \eps_3]$.  Thus, for at least $\eps_2 \sqrt{n}/2$ values of $k \in [\eps_2 \sqrt{n}/2, \sqrt{n}]$ we have $$|\varphi_k(2\pi Q_\bt(k))| \leq 1 - \eps_4$$ for some $\eps_4 > 0$ depending only on $\eps_1$ and $\eps_3$.  We then may bound \begin{align*}
		|\varphi(2\pi \bt)| \leq \prod_{k = \eps_2 \sqrt{n}/2}^{\sqrt{n}} |\varphi_k(2\pi Q_\bt(k))| \leq (1 - \eps_4)^{\eps_2 \sqrt{n}/2} \leq e^{- \eps_2 \eps_4 \sqrt{n} / 2}\,.
		\end{align*}
	\end{proof}
	
	We now need a structural result which will say that either $\|Q_{\bt}(x)\|_{\R/\Z}$ is either bounded below quite often, or $Q_{\bt}$ is close to an element of $\QQ_J$.   A quantitative equidistribution theorem of Green and Tao \cite[Proposition 4.3]{green2012quantitative} will make explicit that these are the only two cases.
	 \begin{theorem}[Green-Tao]\label{th:Green-Tao}
		Let $d \geq 0$ and suppose that $g$ is a polynomial with real coefficients of degree $d$.  Suppose that $\delta \in (0,1/2)$. Then either $(g(x) \mod \Z)_{x \in [N]}$ is $\delta$-equidistributed or else there is an integer $k$ satisfying $1 \leq k \ll \delta^{-O_d(1)}$ so that $\| kg \mod \Z \|_{C^\infty[N]} \ll \delta^{-O_d(1)}$.
	\end{theorem}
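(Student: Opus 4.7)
The plan is to prove this by induction on the degree $d$ using Weyl's differencing technique, which is the standard approach taken by Green and Tao themselves. The base case $d = 0$ is essentially trivial: a constant polynomial satisfies $\|g\|_{C^\infty[N]} \leq 1/2$, so taking $k=1$ fulfills the second alternative for any $\delta \in (0,1/2)$.

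For the inductive step, I would first convert the failure of equidistribution into a large exponential sum via a quantitative Weyl criterion: if $(g(x) \bmod \Z)_{x \in [N]}$ fails to be $\delta$-equidistributed, then there exists an integer $h$ with $1 \leq |h| \ll \delta^{-O(1)}$ such that
\[
\Bigl| \tfrac{1}{N}\sum_{x=1}^{N} e(h g(x)) \Bigr| \gg \delta^{O(1)}.
\]
Next I would apply the van der Corput / Weyl differencing inequality $d-1$ times. Writing $\Delta_s g(x) = g(x+s)-g(x)$, each application lowers the degree of the polynomial inside the exponential by one at the cost of averaging over a new shift parameter. After $d-1$ steps the resulting polynomial is linear in $x$, with leading coefficient $h \cdot d!\,\alpha_d s_1 \cdots s_{d-1}$, where $\alpha_d$ is the leading coefficient of $g$. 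Since a linear exponential sum $|\sum_x e(\beta x)|$ is bounded by $\min(N, \|\beta\|_{\R/\Z}^{-1})$, the lower bound forces $\|h \cdot d!\,\alpha_d s_1 \cdots s_{d-1}\|_{\R/\Z} \ll \delta^{-O(1)}/N$ for a proportion $\gg \delta^{O(1)}$ of tuples $(s_1,\dots,s_{d-1}) \in [N]^{d-1}$.

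The main technical obstacle is converting this ``many products are near integers'' statement into a genuine rationality statement about $\alpha_d$ itself. This is a multi-dimensional Vinogradov-type lemma: a standard one-variable pigeonhole argument shows that if $\|\gamma s\|_{\R/\Z}$ is small for many $s \in [N]$ then $\gamma$ must be close to a rational with small denominator; iterating this argument over the $d-1$ variables $s_1,\dots,s_{d-1}$ produces a positive integer $k \ll \delta^{-O_d(1)}$ with $\|k\alpha_d\|_{\R/\Z} \ll N^{-d}\delta^{-O_d(1)}$. By the definition of the $C^\infty$ norm (which weights the coefficient of $x^j$ by $N^j$), this exactly says $\|k \alpha_d x^d\|_{C^\infty[N]} \ll \delta^{-O_d(1)}$, so we have extracted the desired smoothness of the top-degree coefficient.

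Finally, one subtracts off a rational approximation to the leading term and applies the inductive hypothesis to the resulting polynomial of degree at most $d-1$, whose exponential sum remains nearly as large because we only perturbed by a function of small $C^\infty$-norm; multiplying the resulting denominator by $k$ yields the final multiplier. The hardest part throughout is tracking the polynomial dependence of all implicit constants on $\delta$ and $d$ so that they accumulate to at most $\delta^{-O_d(1)}$---this bookkeeping, rather than any individual conceptual step, is the substantive difficulty in the Green--Tao argument.
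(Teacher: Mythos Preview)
The paper does not actually prove this statement: Theorem~\ref{th:Green-Tao} is quoted as \cite[Proposition 4.3]{green2012quantitative} and used as a black box. The paper's only remark about its proof is the single sentence ``Theorem~\ref{th:Green-Tao} is proven via an effective version of Weyl's equidistribution theorem together with iterating the van der Corpet difference trick,'' which is precisely the strategy you outline. So your sketch is consistent with what the authors say, and there is nothing in the paper to compare against at a finer level.

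One small caveat worth flagging in your write-up: the notion of $\delta$-equidistribution here is the Green--Tao one, testing against all Lipschitz $F$ \emph{and} all arithmetic progressions $P\subset [N]$ of length $\geq \delta N$. Your step from ``not $\delta$-equidistributed'' to a large character sum $|N^{-1}\sum_x e(hg(x))| \gg \delta^{O(1)}$ therefore needs the slightly stronger quantitative Weyl criterion that handles the progression restriction (one first Fourier-expands the indicator of the progression, or restricts $x$ to a subprogression and relabels). This is routine but should be mentioned, since otherwise the reduction to a single exponential sum is not immediate from the stated definition.
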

	
	Some definitions are in order.
	 \begin{defn}
		Let $g$ be a polynomial of degree $d$.  Then there exist unique $s_j$ so that $$g(m) = s_0 + s_1 \binom{m}{1} + \cdots + s_d \binom{m}{d}$$ for each $m$.  Define $$\| g \|_{C^\infty(N)} := \sup_{1 \leq j \leq d} N^{j} \| s_j \|_{\R/\Z}$$ where $\| x \|_{\R/\Z}$ is the nearest distance from $x$ to an integer.
		
		Further, a sequence $\{g(m)\}_{m \in [N]}$ is $\delta$-equidistributed if for all Lipschitz functions $F: \R/\Z \to \mathbb{C}$ and arithmetic progressions $P \subset [N]$ with $|P| \geq \delta N$ we have \begin{align*}
		\left|\frac{1}{|P|}\sum_{m \in P} F(g(m)) - \int_{\R/\Z} F(x)\,dx \right| \leq \delta \| F \|_{\mathrm{Lip}}\,.
		\end{align*} 
	\end{defn}
	
	Theorem \ref{th:Green-Tao} is proven via an effective version of Weyl's equidistribution theorem together with iterating the van der Corpet difference trick.
	Anticipating an application of Theorem \ref{th:Green-Tao}, we first translate the two possibilities of its dichotomy into our setting, beginning with the structured case:
	\begin{lemma}\label{lem:rational-approx}
		Let $g(z) = t_0 + t_1z + \cdots + t_d z^d$ be of degree $d$ and have coefficients in $(-k/2,k/2]$.  There exists a constant $C$ so that if $\| g \|_{C^\infty[N]} \leq R$ then there is an integer-valued polynomial $q(z) = \sum q_j z^j$ so that $\|t_j - q_j\|_{\R/(k\Z)} \leq C R N^{-j}$ for all $j \in [d]$.
	\end{lemma}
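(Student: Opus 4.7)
The plan is to work in the binomial basis $\{\binom{z}{j}\}_{j\ge 0}$, where both the hypothesis $\|g\|_{C^\infty[N]}\le R$ and the property of being integer-valued admit clean descriptions, and then to transfer the resulting bound back to the monomial basis. Concretely, expand $g(z)=s_0+\sum_{j=1}^{d}s_j\binom{z}{j}$; by the very definition of $\|\cdot\|_{C^\infty[N]}$ the hypothesis is exactly $\|s_j\|_{\R/\Z}\le R\,N^{-j}$ for every $j\in[d]$. For each such $j$ pick an integer $n_j$ achieving this nearest-integer distance, so that $|s_j-n_j|\le R\,N^{-j}$, and pick $n_0\in\Z$ arbitrarily (for instance $n_0=\lfloor s_0\rfloor$).

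Define the candidate $q(z):=n_0+\sum_{j=1}^{d}n_j\binom{z}{j}$. By P\'olya's theorem (cited just above the statement), integer linear combinations of the $\binom{z}{j}$ are integer-valued polynomials, so $q$ is integer-valued. By construction the binomial-basis coefficients of $g-q$ are precisely $s_j-n_j$, which have size at most $R\,N^{-j}$ for $j\ge 1$.

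It remains to convert this binomial-basis bound into a monomial-basis bound. Using the standard expansion $\binom{z}{j}=\tfrac{1}{j!}\sum_{i=0}^{j}s(j,i)\,z^i$, where the $s(j,i)$ are signed Stirling numbers of the first kind, the coefficient of $z^i$ in $g-q$ is
\[
t_i-q_i \;=\; \sum_{j=i}^{d}(s_j-n_j)\,\frac{s(j,i)}{j!}.
\]
The quantities $|s(j,i)|/j!$ are bounded by a constant $C_d$ depending only on $d$, and $\sum_{j\ge i}N^{-j}\le 2\,N^{-i}$ whenever $N\ge 2$, so $|t_i-q_i|\le 2C_dR\,N^{-i}$. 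The case $N=1$ is trivial after enlarging $C$, since $\|t_i-q_i\|_{\R/(k\Z)}\le k/2$ is automatic from the hypothesis $t_i\in(-k/2,k/2]$ (choose $q_i$ within $k/2$ of $t_i$ modulo $k\Z$). Finally, since $0\in k\Z$ we have $\|x\|_{\R/(k\Z)}\le|x|$ for every real $x$, giving $\|t_i-q_i\|_{\R/(k\Z)}\le|t_i-q_i|\le CR\,N^{-i}$ with $C=2C_d$, as required.

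I do not anticipate any serious obstacle: the entire argument is a bookkeeping exercise in change of basis, leveraging P\'olya's theorem to guarantee integer-valuedness. The one subtlety that has to be handled carefully is verifying that the constant $C$ depends only on the degree $d$, and in particular is independent of $k$, $R$, and $N$; this forces one to make the Stirling-number bounds on the monomial/binomial transition matrix explicit rather than sweeping them into an anonymous $O(1)$.
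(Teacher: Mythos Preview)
Your proposal is correct and follows essentially the same route as the paper: round the binomial-basis coefficients $s_j$ to nearby integers, note that the resulting polynomial is integer-valued by P\'olya's theorem, and then use the upper-triangularity of the binomial-to-monomial change of basis to transfer the bounds $|s_j-n_j|\le R\,N^{-j}$ to $|t_i-q_i|\le C R\,N^{-i}$. The only cosmetic difference is that you make the transition matrix explicit via Stirling numbers, whereas the paper invokes the abstract linear map $T$ and its bounded inverse together with triangularity; your version is arguably cleaner since the coordinate-wise bound follows directly rather than via an operator-norm argument.
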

	\begin{proof}
		By assumption we have $ N^j \| s_j \|_{\R  / \Z} \leq R$ for all $1 \leq j \leq d$ where $s_j$ is defined by $g(n) = \sum s_i \binom{n}{i}$.  Thus there are integers $m_j$ so that $s_i = m_j + \eps_j$ where $|\eps_j| \leq N^{-j}R$.  Define the linear transformation $T$ to be the map that takes as input $(t_0,\ldots,t_d)$ and outputs $(s_0,\ldots,s_d)$ defined by $$\sum_{j = 0}^d t_j z^j = \sum_{j = 0}^d s_j \binom{z}{j}\,.$$
		
		Since $T$ is linear and invertible, there is a constant $C$ so that $C^{-1} \leq \frac{\| Tx \|}{\| x \|} \leq C$ for all $x \neq 0$.  For a given vector of integers $(m_j)_{j =0}^d$, define $q_j$ to be $T^{-1}((m_j)_j)$ so that $$ \sum_{j = 0}^d q_j z^j = \sum_{j =0}^d m_j \binom{z}{j}\,,$$ and note that the above is an integer-valued polynomial.  By linearity together with the fact that $q_j$ depends only on $s_j,s_{j+1},\ldots,s_d$, we have that 
		$$ T^{-1} \{ x : |x_j - m_j| \leq \eps_j    \} \subset \{ y:  |y_j - q_j| \leq C\sum_{i = j}^d \eps_i   \}\,.$$
		By taking the rational numbers $q_j$ modulo $k \Z$ and replacing $| \cdot |$ with $\| \cdot \|_{\R/(k\Z)}$, we have 
		Thus, we have that $$\|t_j - q_j\|_{\R/(k\Z)}  \leq C \sum_{i = j}^d \eps_i \leq 2 C R N^{-j}.$$
	\end{proof}

	We now show that the Green-Tao definition of equidistributed is good enough for the case at hand.
	\begin{lemma}\label{lem:approx-equid}
		If a sequence $\{ g(m)\}_{m \in [N]}$ is $\frac{1}{32}$-equidistributed then $$
		\frac{|\{m \in [N] : \| g(m)\|_{\R/\Z} \geq 1/4  \} |}{N} \geq \frac{1}{8} \,.$$
	\end{lemma}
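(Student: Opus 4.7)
The plan is to feed the $\tfrac{1}{32}$-equidistribution hypothesis a carefully chosen Lipschitz test function $F:\R/\Z \to [0,1]$ that is pointwise dominated by $\mathbf{1}_{\|x\|_{\R/\Z} \geq 1/4}$, thereby converting the average of $F(g(m))$ over $m \in [N]$ into a lower bound on the density we are trying to estimate.

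The natural choice is the triangular bump supported on $[1/4, 3/4]$ with $F(1/4) = F(3/4) = 0$ and $F(1/2) = 1$, extended by $0$ on the rest of $[0,1]$. This descends to a continuous function on $\R/\Z$ with Lipschitz constant $\|F\|_{\mathrm{Lip}} = 4$ and integral $\int_{\R/\Z} F\,dx = 1/4$. By construction $F(x) > 0$ forces $x \in (1/4, 3/4)$ and hence $\|x\|_{\R/\Z} \geq 1/4$, giving the pointwise bound $F(x) \leq \mathbf{1}_{\|x\|_{\R/\Z} \geq 1/4}$. Applying the equidistribution definition with the arithmetic progression $P = [N]$ (which trivially satisfies $|P| = N \geq \tfrac{1}{32}N$) then yields
\[
\left|\frac{1}{N}\sum_{m \in [N]} F(g(m)) - \frac{1}{4}\right| \leq \frac{1}{32} \cdot \|F\|_{\mathrm{Lip}} = \frac{1}{8},
\]
so the average of $F(g(m))$ is at least $\tfrac{1}{8}$, and the pointwise inequality immediately upgrades this into the desired lower bound on $|\{m \in [N] : \|g(m)\|_{\R/\Z} \geq 1/4\}|/N$.

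There is essentially no technical obstacle here; the only design decision is to ensure that the slack $\|F\|_{\mathrm{Lip}} \cdot \tfrac{1}{32}$ is strictly less than $\int F$, and the symmetric triangular bump supported on $[1/4, 3/4]$ arranges this by exactly the margin $\tfrac{1}{8}$ demanded by the statement. Any other symmetric, non-negative Lipschitz approximation to $\mathbf{1}_{[1/4,3/4]}$ with the same support and integral comparable to $1/4$ would serve equally well.
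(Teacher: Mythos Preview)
Your proof is correct and essentially identical to the paper's own argument: the paper also uses the triangular bump $F$ that is piecewise linear with $F(0)=F(1/4)=F(3/4)=F(1)=0$ and $F(1/2)=1$, observes $\int F=1/4$, $\|F\|_{\mathrm{Lip}}=4$, and $F(x)\le \one_{\|x\|_{\R/\Z}\ge 1/4}$, and applies the definition of $1/32$-equidistribution exactly as you do.
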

	\begin{proof}
		Let $F$ denote the Lipschitz function on $\R/\Z$ that is piecewise linear with $F(0) = F(1/4) = F(3/4) = F(1) = 0$ and $F(1/2) = 1$. Then $\int_{\R/\Z} F(x)\,dx = \frac{1}{4}$, $\| F\|_{\mathrm{Lip}} = 4$ and $F(x) \leq \one_{\| x \|_{\R/\Z} \geq 1/4}$.  By the definition of $1/32$-equidistributed we have $$\left|\frac{1}{N}\sum_{m \in [N]} F(g(m)) - \frac{1}{4} \right| \leq \frac{1}{8}$$
		implying $$\frac{1}{N} \sum_{m \in [N]}\one_{ \|g(m)\|_{\R/\Z} \geq 1/4  } \geq \frac{1}{N}\sum_{m \in [N]} F(g(m)) \geq \frac{1}{8}\,.$$
	\end{proof}
	
	We are now prepared to make use of Theorem \ref{th:Green-Tao}; rather than proving Lemma \ref{lem:R_n} straight away, it will show that Lemma \ref{lem:R_n} holds for \emph{some} $\delta > 0$ rather than all $\delta > 0$:
	\begin{lemma}\label{lem:some-C}
		There exist constants $c,C > 0$ so that $$ \int_{R(C)} |\varphi(2\pi \bt)| \,d\mathbf{t}  = O(e^{-c \sqrt{n}})\,.$$ 
	\end{lemma}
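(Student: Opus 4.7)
My plan is to prove the stronger pointwise bound $|\varphi(2\pi \bt)| \leq e^{-c \sqrt n}$ for all $\bt \in R(C)$, for suitable constants $c, C > 0$; integrating over the set $R(C)$, which has volume at most $1$, then yields the lemma. The engine is the Green--Tao dichotomy (Theorem~\ref{th:Green-Tao}), which I apply to the polynomial $Q_\bt$ on the window $[\lfloor \sqrt n \rfloor]$ with a fixed parameter $\delta_0 = 1/32$. For each $\bt$ this produces either a $\delta_0$-equidistribution statement for $(Q_\bt(m) \bmod \Z)_{m \in [\lfloor \sqrt n \rfloor]}$, or a rational approximation with integer denominator bounded by some $K_0 = K_0(\delta_0, J)$.

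In the equidistribution case, Lemma~\ref{lem:approx-equid} guarantees that at least $\sqrt n / 8$ values of $m \in [\sqrt n]$ satisfy $\|Q_\bt(m)\|_{\R/\Z} \geq 1/4$, and Lemma~\ref{lem:engine} converts this directly into the desired exponential bound. In the rational case, Green--Tao provides $1 \leq k_\bt \leq K_0$ with $\|k_\bt Q_\bt\|_{C^\infty[\sqrt n]} \leq R_0$, and Lemma~\ref{lem:rational-approx} then shows that each $t_j$ lies within $C_0 R_0 n^{-j/2}/k_\bt$ of a number of the form $q_j/k_\bt$, where $\sum_j q_j x^j$ is integer-valued on $\Z$. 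As $k_\bt$ and $q$ range over their finitely many possibilities, these rational approximants form a finite enlargement $\QQ_J' \supseteq \QQ_J$ inside $(-1/2,1/2]^J$. Choosing $C$ larger than the implicit constant above ensures that if $\bt \in R(C)$ falls into this case, its nearest $\QQ_J'$-approximant $r$ must lie in $\QQ_J' \setminus \QQ_J$, since $\bt$ is by definition farther than $C n^{-j/2}$ from every coordinate of every element of $\QQ_J$.

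For such $r \in \QQ_J' \setminus \QQ_J$, the polynomial $P_r(x) = \sum_j r_j x^j$ is \emph{not} integer-valued on $\Z$, so by periodicity there is a positive density $\rho_r > 0$ of integers $m$ with $\|P_r(m)\|_{\R/\Z} \geq 1/4$; since $\QQ_J' \setminus \QQ_J$ is finite, $\rho := \min_r \rho_r > 0$. Choosing $c_1 > 0$ sufficiently small makes $|Q_\bt(m) - P_r(m)| \leq \sum_j |t_j - r_j|\, m^j \leq 1/8$ uniformly for $m \leq c_1 \sqrt n$, so that $\|Q_\bt(m)\|_{\R/\Z} \geq 1/8$ on a set of density at least $c_1 \rho / 2$ inside $[\sqrt n]$. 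Lemma~\ref{lem:engine} again delivers the desired exponential bound on $|\varphi(2\pi \bt)|$.

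The main obstacle is the mismatch between Green--Tao's rational approximants and the integer-valued set $\QQ_J$: the denominators $k_\bt \leq K_0$ produced by Green--Tao approximate $\bt$ by rational polynomials which in general need not be integer-valued, so the approximating $r$ may lie outside $\QQ_J$. The perturbation argument above resolves this by exploiting that any non-integer-valued polynomial of bounded denominator is robustly far from $\Z$ on a positive-density sublattice, with this robustness preserved on the subrange $m \leq c_1 \sqrt n$ where the accumulated coefficient error stays below $1/8$. This is what allows a single pointwise estimate to cover all of $R(C)$ at once.
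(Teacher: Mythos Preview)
Your strategy mirrors the paper's: invoke the Green--Tao dichotomy with a fixed $\delta_0$, dispatch the equidistributed case via Lemmas~\ref{lem:approx-equid} and~\ref{lem:engine}, and in the structured case use Lemma~\ref{lem:rational-approx} to approximate $\bt$ by a vector from a finite rational set, then feed a density lower bound into Lemma~\ref{lem:engine}. There are, however, two gaps in the execution. The first is minor: the assertion that $\|P_r(m)\|_{\R/\Z}\geq 1/4$ on a set of positive density is false in general. With $J=\{2\}$ and $r_2=1/5$ one has $m^2/5 \bmod 1 \in \{0,1/5,4/5\}$, so $\|P_r(m)\|_{\R/\Z}\leq 1/5$ for every $m$. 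The correct uniform lower bound is $1/k_\bt \geq 1/K_0$, since $P_r=q/k_\bt$ takes values in $\tfrac{1}{k_\bt}\Z$; this is easily repaired.

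The second gap is more serious and concerns the case $0\in J$. The $C^\infty[N]$ norm, and hence Lemma~\ref{lem:rational-approx}, only controls the coefficients $t_j$ for $j\geq 1$; it says nothing about $t_0$. Consequently your bound $\sum_{j\in J}|t_j-r_j|\,m^j \leq 1/8$ cannot be achieved by shrinking $c_1$: the $j=0$ term equals $|t_0-r_0|$, which is independent of $c_1$ and may be as large as $1/(2k_\bt)$ even for the best choice of $r_0$. The paper addresses exactly this point in its Case~3 by observing that the non-integer-valued polynomial $q/k_\bt$ takes at least two distinct residues $a/k_\bt\not\equiv b/k_\bt$ modulo $\Z$ on arithmetic progressions of positive density, so that for \emph{any} shift $t_0$ one of $\|t_0+a/k_\bt\|_{\R/\Z}$ and $\|t_0+b/k_\bt\|_{\R/\Z}$ is at least $1/(2k_\bt)$. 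Your argument can be patched along these lines (or by noting that $1/k_\bt - 1/(2k_\bt) > 0$ once the first gap is fixed), but as written the perturbation step breaks whenever $0\in J$.
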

	\begin{proof}
		Let $\mathbf{t} \in R(C)$ for $C \geq 1$ to be determined later. 
		Apply Theorem \ref{th:Green-Tao} and Lemma \ref{lem:rational-approx} with $\delta = 1/32$ to obtain constants $C_2, C_3$ so that either $(Q_\bt(x) \mod \Z )_{x \in [\sqrt{n}]}$ is $1/32$-equidistributed or there is a $k$ with $1 \leq k \leq C_2$ so that $$\| k Q_\bt  \|_{C^{\infty}[\sqrt{n}]} \leq C_3\,. $$ 
		
		We have three cases that we address separately:
		 
		\noindent \underline{Case 1: Approximately equidistributed}: 		If $(Q_\bt(x) \mod \Z )_{x \in [\sqrt{n}]}$ is $1/32$-equidistributed, then \begin{align*}
		\frac{| \{x \in [\sqrt{n}] : \| Q_\bt(x) \|_{\R/\Z} \geq 1/4  \} |}{\sqrt{n}} \geq \frac{1}{8}\
		\end{align*}
		by Lemma \ref{lem:approx-equid}.  Lemma \ref{lem:engine} shows $|\varphi(2\pi \bt)| \leq e^{- c\sqrt{n}}$ for some $c$ depending only on $K$.

		\underline{Case 2: Not equidistributed, but $k = 1$}: In this case, Lemma \ref{lem:rational-approx} implies that there is an integer-valued polynomial $q$ so that $\|t_j - q_j \|_{\R/\Z} \leq C_4 n^{-j/2}$ for all $j \in J$.  If we require $C \geq C_4$, then this would imply $\mathbf{t}\notin R(C)$, thus completing this case.
		
		\underline{Case 3: Not equidistributed, $k > 1$}: Suppose that $\| k Q_\bt \|_{C^\infty[\sqrt{n}]} \leq C_3$ for some $k > 1$ and \emph{not} $k = 1$.  Then there is an integer-valued polynomial $q$ so that $Q_\bt$ is close to $q/k$; further, since we know that $\| Q_\bt \|_{C^\infty[\sqrt{n}]} > C_3$, we have that $q$ is \emph{not} integer-valued.
		
		Since $q$ is integer-valued but $q/k$ is not, there exists some $n$ and integer $a$ so that $q(n) = a$ where $a \not\equiv 0 \mod k$.  Further, note that the polynomial $(q(z) - a)/k$ cannot be integer-valued since it has non-zero constant term.  Thus, there must be some $m$ and other value $b$ with $b \not\equiv a \mod k$ so that $q(m) = b$.  Write $q(z)/k = \frac{1}{B}\sum_{j \in J} a_j z^j$ where $B$ and each $a_j$ are integers.  Then note that for each integer $r$ with $r \equiv n \mod B$ we have $q(r)/k \equiv a/k \mod \Z$; similarly, if $r \equiv m \mod B$ then $q(r)/k \equiv b/k \mod \Z$.  Moreover, $\|b - a\|_{\R/(k\Z)} \geq \frac{1}{k}$.  Therefore for each $t_0$, we have that $\max \{ \|t_0 + b/k\|_{\R/\Z},\|t_0 + a/k\|_{\R/\Z}   \} \geq \frac{1}{2k} \geq \frac{1}{C_5}$.  Therefore, on some set of positive density, $Q_\bt$ is uniformly bounded away from $0$ on the torus $\R/\Z$.  Applying Lemma \ref{lem:engine} completes the proof.
	\end{proof}

	We are now ready to prove the lemma.
	\begin{proof}[Proof of Lemma \ref{lem:R_n}]
		In light of Lemma \ref{lem:some-C}, it is sufficient to find a constant $c = c(\delta,K)$ so that $$\int_{R(\delta) \setminus R(C)} |\varphi(2\pi\bt)| \,d\bt = O(e^{-c \sqrt{n}})\,.$$  Write $$R(\delta) \setminus R(C) = \bigcup_{q \in \QQ_J} (q + U(C)\setminus U(\delta))\,. $$
		
		Thus, it is sufficient to show the bound for each set in the above union.  Fix some $q \in \QQ_J$ and $\mathbf{t} \in q + U(C)\setminus U(\delta)$.  In the case where $0 \notin J$, simply set $t_0 = 0$ so that the following two cases make sense.
		
		\underline{Case 1: $\|t_0\|_{\R/\Z} \geq \delta$}.  Then for $m \leq  \delta \sqrt{n}/ (2 d C)$, we have \begin{align*}\left\| \sum_{j \in J} t_j m^j \right\|_{\R/\Z} &= \left\|t_0 + \sum_{j  \in J \setminus \{0\}} (t_j - q_j) m^j \right\|_{\R/\Z} \\
		&\geq \delta - \frac{\delta}{2} \\
		&= \delta/2\,. 
		\end{align*}
		Applying Lemma \ref{lem:engine} completes this case.
		
		\underline{Case 2: $\|t_0\|_{\R/\Z} < \delta$}.  The idea will be to look in the limiting setting and use a compactness argument.  Define the set of polynomials $$T := \left\{  \sum_{j \in J} c_j z^j : |c_i| \leq C \text{ for all }i \text{ and } |c_i| \geq \delta \text{ for some }i, |c_0| < \delta \right\}\,.$$
		
		Note that $T$ is compact; further, for $|z| \leq \delta /(2C)$ and all $p \in T$ we have $$|p(z)| < \delta + \delta = 2\delta\,. $$  
		
		Since $R(\delta)$ increases as $\delta$ decreases, we may assume without loss of generality that $\delta \leq 1/8$; in particular, this implies that $|p(z)| \leq 1/4$ for all $|z| \leq \delta/(2C)$. 
		
		For each $p \in T$, $p$ is not identically zero on $[0,\delta/(2C)]$ and so $|p|$ must attain a non-zero maximum $M(p)$.  Since $p \mapsto M(p)$ is a continuous function of the coefficients of $p$ and $T$ is compact, we must have that there is a value $M_0$ so that $M(p) \geq M_0$ for all $p \in S$.  Additionally, let $L(p)$ be the length of the maximum interval in $[0,\delta/(2C)]$ on which $|p| \geq M_0 / 2$.  Note that $L$ is non-zero on $T$ and continuous, and so we must have $|L(p)| \geq L_0$ for all $p \in T$.  
		
		For any $\mathbf{t}$ in the desired set, find $s_j \in (-1/2,1/2]$ so that $(t_j - q_j) \equiv s_j \mod \Z$.  Then the polynomial 
		$$p(z) := \sum_{j \in J} (s_j \cdot n^{j/2}) z^j $$ 
		lies in the set $T$.  Thus, there is an interval $I \subset [0,\delta/(2C)]$ of length at least $L_0$ on which $|p(z)| \geq M_0 /2$.  For any $x$ for which we have $x/N \in I$ compute \begin{align*}
		\left\| \sum_{j \in J} t_j x^j \right\|_{\R/\Z} &= \left\| \sum_{j \in J} s_j x^j \right\|_{\R/\Z} \\
		&=  \left\| \sum_{j \in J} (s_j n^{j/2}) (x/\sqrt{n})^j \right\|_{\R/\Z} \\
		&= \left|\sum_{j \in J} (s_j n^{j/2}) (x/\sqrt{n})^j \right| \\
		&\geq M_0 / 2
		\end{align*}
		where in the last equality we used the fact that $|p(z)| \leq 1/4$ for $|z| \leq \delta/(2C)$.  Since $I$ is an interval, we have that the number of $x$ for which $x/\sqrt{n} \in I$ is at least $\sqrt{n}\cdot |I| - 1$.  Since $|I|$ is bounded below by $L_0$, the proof is complete after applying Lemma \ref{lem:engine}.
	\end{proof}
	
	\subsection{Combining the integrals} \label{sec:combined}
	The primary goal of this section is to show the following lemma.
	\begin{lemma}\label{lem:combined}
		For each $\delta > 0$ there is a constant $c = c(\eps,\delta,d) > 0$ so that $$\sup_{\ba}\left|\P(\bX = \ba) - \one_{\ba \in \NT}|\QQ_J| \int_{U} \varphi(2 \pi \bt)e^{-i\langle 2\pi\bt,\ba\rangle}\,d\bt\right| = O(e^{-c\sqrt{n}})\,. $$
	\end{lemma}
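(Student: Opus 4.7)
My plan is to start from the Fourier inversion identity~\eqref{eq:prob-inversion} and split the integration domain $(-1/2,1/2]^J$ as the disjoint union of $R(\delta)$ and $\bigcup_{q \in \QQ_J} (q + U)$.  For $n$ large enough, the sets $q + U$ are disjoint (since $|U|$ shrinks like $n^{-\sum_j j/2}$ while the elements of $\QQ_J$ are a fixed finite set of coefficient vectors).  Lemma~\ref{lem:R_n} immediately handles the $R(\delta)$ piece with error $O(e^{-c\sqrt n})$, so the entire problem reduces to analyzing each integral over $q + U$ individually and summing.

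The heart of the argument is a periodicity/change-of-variables step followed by character orthogonality.  In each $q+U$ piece I will substitute $\bt = q + \bs$ with $\bs \in U$.  Because $q \in \QQ_J$, the polynomial $Q_q(k) = \sum_{j \in J} q_j k^j$ is an integer for every integer $k$; since each $Y_k$ is $\Z$-valued, the characteristic function $\varphi_k$ has period $2\pi$, so $\varphi_k(2\pi Q_{q+\bs}(k)) = \varphi_k(2\pi Q_\bs(k))$ and hence $\varphi(2\pi(q+\bs)) = \varphi(2\pi \bs)$ by~\eqref{eq:product}.  The exponential factor splits as $e^{-2\pi i \langle q+\bs,\ba\rangle} = e^{-2\pi i \langle q,\ba\rangle} \cdot e^{-2\pi i \langle \bs,\ba\rangle}$.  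Thus
\[
\int_{q+U} \varphi(2\pi\bt)\, e^{-2\pi i \langle \bt,\ba\rangle}\,d\bt = e^{-2\pi i \langle q,\ba\rangle} \int_U \varphi(2\pi \bs)\, e^{-2\pi i \langle \bs,\ba\rangle}\,d\bs,
\]
and summing over $q \in \QQ_J$ pulls out the scalar factor $\sum_{q \in \QQ_J} e^{-2\pi i \langle q,\ba\rangle}$.

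To finish, I will verify the character sum identity
\[
\sum_{q \in \QQ_J} e^{-2\pi i \langle q,\ba\rangle} \;=\; \one_{\ba \in \NT}\cdot |\QQ_J|.
\]
The key observation is that $\QQ_J$, viewed modulo $\Z^J$, forms a finite abelian subgroup of $(\R/\Z)^J$ (sums and inverses of integer-valued polynomials are again integer-valued), and the map $\chi_\ba:\bt \mapsto e^{-2\pi i \langle \bt,\ba\rangle}$ is a well-defined character of $(\R/\Z)^J$ for $\ba \in \Z^J$.  By orthogonality of characters, the sum is $|\QQ_J|$ if $\chi_\ba$ is trivial on $\QQ_J$ and $0$ otherwise.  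Triviality means $\sum_{j} q_j a_j \in \Z$ for all $q \in \QQ_J$, which is precisely the defining condition $\ba \in \NT$.  Combining this identity with the change-of-variables formula above and the bound from Lemma~\ref{lem:R_n} gives the claimed estimate uniformly in $\ba$.

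I expect the only subtle point to be confirming that the translates $q + U$ are genuinely disjoint once reduced modulo $\Z^J$ (which is automatic for $n$ large since distinct $q \in \QQ_J$ differ by a fixed positive amount in $(\R/\Z)^J$ while $U$ shrinks to $0$), and justifying that the definition of $\NT$ matches the kernel of the character map; this is where the interpretation of $\QQ_J$ as an abelian group of coset representatives is essential.
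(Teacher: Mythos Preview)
Your proposal is correct and follows essentially the same path as the paper: Fourier inversion, splitting off $R(\delta)$ via Lemma~\ref{lem:R_n}, the change of variables on each translate $q+U$ using $2\pi$-periodicity of the $\varphi_k$ (this is the paper's Lemma~\ref{lem:combine-integrals}), and then evaluating the exponential sum $\sum_{q\in\QQ_J} e^{-2\pi i\langle q,\ba\rangle}$.

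The one genuine difference is how the indicator $\one_{\ba\in\NT}$ enters. The paper does not use character orthogonality at all: it first observes that $\bX$ deterministically lies in $\NT$, so $\P(\bX=\ba)=0$ for $\ba\notin\NT$ and the indicator can be inserted in front of the inversion integral from the outset. After that, only the case $\ba\in\NT$ needs to be handled, and there the exponential sum is trivially $|\QQ_J|$ (the paper's Lemma~\ref{lem:NT}). Your route instead treats both cases at once by recognizing $\QQ_J$ modulo $\Z^J$ as a finite abelian subgroup of $(\R/\Z)^J$ and invoking orthogonality to get $\sum_{q}e^{-2\pi i\langle q,\ba\rangle}=\one_{\ba\in\NT}|\QQ_J|$ directly. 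This is perfectly valid (closure under addition holds because subtracting an integer-coefficient polynomial supported on $J$ preserves integer-valuedness), and arguably more self-contained, since it extracts the number-theoretic obstruction from the Fourier side rather than by inspection. The paper's version is marginally shorter because it avoids checking the group structure.
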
 
	
	A first step is a simple lemma that changes coordinates to combine the integrals from the previous section.  For a polynomial $q \in \QQ_J$, we write $\mathbf{q} \in \R^{J}$ for the vector of coefficients of $q$.  
	\begin{lemma}\label{lem:combine-integrals}
		For $n$ sufficiently large we have 
		$$\int_{\cup_{q \in \QQ_J} (q + U) } \varphi(2\pi \bt)e^{-i\langle 2\pi\bt,\ba\rangle} \,d\mathbf{t} = \left(\sum_{q \in \QQ_J} e^{-i\langle 2\pi\bq, \ba\rangle} \right)\int_{U}  \varphi(2\pi\bt)e^{-i\langle 2\pi \bt,\ba\rangle }  \,d\mathbf{t}\,.  $$
	\end{lemma}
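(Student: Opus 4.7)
The plan is to change variables in each of the $|\QQ_J|$ integrals comprising the left-hand side by writing $\bt = \bq + \bs$ with $\bs \in U$, and then to exploit the key fact that the integer-valued polynomials in $\QQ_J$ act as periods of $\varphi$.

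First I would check that the sets $\{q + U\}_{q \in \QQ_J}$ are pairwise disjoint (modulo $\Z^J$) for $n$ sufficiently large, so that the left-hand integral genuinely splits as a sum over $q \in \QQ_J$. Distinct $q, q' \in \QQ_J$ differ in at least one coordinate by some amount bounded below by a constant $c_0 = c_0(J) > 0$, whereas $U$ has diameter at most $2\delta n^{-j/2}$ in coordinate $j$; hence for $n$ large the translates are disjoint. (If $\bq_j$ is near $1/2$ and the translate by $U$ escapes $(-1/2,1/2]^J$, we use that the integrand is $1$-periodic in each $t_j$ to wrap back into the fundamental domain.)

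Next I would establish the central identity
\begin{equation*}
\varphi\bigl(2\pi(\bq+\bs)\bigr) = \varphi(2\pi \bs) \qquad \text{for all } q \in \QQ_J, \ \bs \in \R^J,
\end{equation*}
using the product formula \eqref{eq:product}. Indeed, $Q_{\bq+\bs}(k) = Q_{\bq}(k) + Q_{\bs}(k)$, and by the defining property of $\QQ_J$ we have $Q_{\bq}(k) \in \Z$ for every integer $k$. Since each $Y_k$ takes integer values, $e^{2\pi i Q_{\bq}(k) Y_k} = 1$ almost surely, whence $\varphi_k(2\pi Q_{\bq+\bs}(k)) = \varphi_k(2\pi Q_{\bs}(k))$ for every $k$, and the product identity follows.

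Putting these together, for each $q \in \QQ_J$ the substitution $\bt = \bq + \bs$ gives
\begin{equation*}
\int_{q+U} \varphi(2\pi \bt)\, e^{-i\langle 2\pi \bt, \ba\rangle}\, d\bt \;=\; e^{-i\langle 2\pi \bq, \ba\rangle} \int_U \varphi(2\pi \bs)\, e^{-i\langle 2\pi \bs, \ba\rangle}\, d\bs,
\end{equation*}
and summing over the $|\QQ_J|$ disjoint translates yields the lemma. No step here is genuinely hard; the only substantive content is the periodicity identity for $\varphi$, which is exactly the Fourier-side manifestation of the number-theoretic obstruction encoded in $\QQ_J$.
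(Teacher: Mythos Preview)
Your proposal is correct and follows essentially the same approach as the paper: both split the union into a disjoint sum over $q \in \QQ_J$, substitute $\bt = \bq + \bs$, and use that $Q_{\bq}(k) \in \Z$ together with the integer-valuedness of $Y_k$ to get $\varphi_k(2\pi Q_{\bq+\bs}(k)) = \varphi_k(2\pi Q_{\bs}(k))$, hence $\varphi(2\pi(\bq+\bs)) = \varphi(2\pi\bs)$. Your treatment is in fact slightly more careful than the paper's, since you explicitly address the wrap-around issue when $\bq_j$ is near the boundary of $(-1/2,1/2]$ via the $1$-periodicity of the integrand.
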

	\begin{proof}
		For $n$ sufficiently large, we have that the union is in fact disjoint.  This means that we may first write \begin{equation*}
		\int_{\cup_{q \in \QQ_J} (q + U) } \varphi(2\pi \bt)e^{-i\langle 2\pi \bt,\ba\rangle} \,d\mathbf{t} = \sum_{q \in \QQ_J} \int_{q + U} \varphi(2\pi\bt )e^{-i\langle 2\pi \bt,\ba\rangle} \,d\mathbf{t}\,.
		\end{equation*}
		
		For each $q \in \QQ_J$, we have \begin{align*}
		\int_{q + U} \varphi(2\pi\bt)e^{-i\langle 2\pi \bt,\ba \rangle} \,d\mathbf{t} 
		&=  \int_U \varphi(2\pi(\bt + \bq) )e^{-i\langle 2\pi (\bt + \bq),\ba\rangle}\,d\mathbf{t} \\
		&= e^{-i\langle 2\pi \bq, \ba \rangle}\int_U \prod_{x \geq 1} \varphi_x(2\pi i \langle \bt + \bq,\bx\rangle)  e^{-i\langle 2\pi \bt,\ba\rangle } \,d\bt \\
		&= e^{-i\langle 2\pi \bq, \ba \rangle} \int_U \prod_{x \geq 1} \varphi_x(2\pi i \langle \bt,\bx\rangle)  e^{-i\langle 2\pi \bt,\ba\rangle } \,d\bt  \\
		&= e^{-i\langle 2\pi \bq, \ba \rangle} \int_U \varphi(2\pi \bt) e^{-i\langle 2\pi \bt,\ba\rangle } \,d\bt\,.
		\end{align*}
	\end{proof}

	\begin{lemma}\label{lem:NT}
		For each vector of integers $ \mathbf{m} = (m_j)_{j \in J}$ if $\mathbf{m} \in \NT$ then we have \begin{equation}
		\sum_{q \in \QQ_J} e^{-2\pi i\langle \bq,\mathbf{m}\rangle} = |\QQ_J| \,.
		\end{equation} 
	\end{lemma}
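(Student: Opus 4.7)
The plan is to unpack the definitions of $\QQ_J$ and $\NT$ and observe that the claim follows almost immediately, with essentially no computation required.

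First I would recall the two definitions: an element $q \in \QQ_J$ is a polynomial $q(x) = \sum_{j \in J} q_j x^j$ whose coefficient vector $\bq = (q_j)_{j \in J}$ lies in $(-1/2, 1/2]^J$ and which takes integer values on $\Z$; meanwhile $\NT$ is precisely the set of integer vectors $\mathbf{m} = (m_j)_{j \in J}$ with the property that $a_j m_j \in \Z$ for every $j \in J$ and every polynomial $\sum_j a_j x^j \in \QQ_J$. The definition of $\NT$ is tailored exactly to the situation at hand.

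Given these definitions, I would argue as follows. Fix $\mathbf{m} \in \NT$ and any $q \in \QQ_J$ with coefficient vector $\bq = (q_j)_{j \in J}$. By the definition of $\NT$ applied to the polynomial $q \in \QQ_J$, we have $q_j m_j \in \Z$ for every $j \in J$, hence
\[
\langle \bq, \mathbf{m} \rangle = \sum_{j \in J} q_j m_j \in \Z,
\]
so $e^{-2\pi i \langle \bq, \mathbf{m} \rangle} = 1$. Summing this identity over the $|\QQ_J|$ distinct choices of $q \in \QQ_J$ gives $|\QQ_J|$, which is the claim.

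There is no real obstacle here: the lemma is a direct consequence of how $\NT$ was chosen to interact with $\QQ_J$. The content is entirely bookkeeping, and the only thing to double-check is that the definition of $\NT$ in~\eqref{eqQJ} guarantees integrality of the individual products $q_j m_j$ (rather than merely of the full sum $\sum_j q_j m_j$)—which it does by construction.
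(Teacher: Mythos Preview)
Your proof is correct and essentially identical to the paper's: both unpack the definition of $\NT$ to conclude that $\langle \bq, \mathbf{m}\rangle \in \Z$ for every $q \in \QQ_J$, so each summand equals $1$. The only cosmetic difference is that the paper records the integrality of the full sum $\sum_{j} q_j m_j$ directly rather than passing through the individual terms $q_j m_j$.
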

	\begin{proof}
		Let $q \in \QQ_J$.  Then since $\mathbf{m} \in \NT$ we have 
		$$\sum_{j\in J} q_j m_j \equiv 0 \mod \Z$$
		and so
		$$\sum_{q \in \QQ_J} e^{-2\pi i \langle \mathbf{q},\mathbf{m} \rangle} = \sum_{q \in \QQ_J} 1 = |\QQ_J|\,.$$
	\end{proof}

	\begin{proof}[Proof of Lemma \ref{lem:combined}]
		Note that if $\ba \notin \NT$ then $\P(\bX = \ba) = 0$.  We may thus write \begin{align*}
		\P(\bX = \ba) &= \one_{\ba \in \NT} \int_{(-1/2,1/2]^{J}} \varphi(2\pi \bt) e^{-i \langle 2\pi \bt, \ba \rangle}\,d\bt \\
		&= \one_{\ba \in \NT} \int_{\cup q \in \QQ_J (q + U)} \varphi(2\pi \bt) e^{-i \langle 2\pi \bt, \ba \rangle}\,d\bt + O(e^{-c\sqrt{n}}) \\
		&= \one_{\ba \in \NT} \left(\sum_{q \in \QQ_J} e^{-i\langle 2\pi \bq,\ba\rangle} \right)\int_{ U} \varphi(2\pi \bt) e^{-i \langle 2\pi \bt, \ba \rangle}\,d\bt + O(e^{-c\sqrt{n}}) \\
		&= \one_{\ba \in \NT}  |\QQ_J|\int_{ U} \varphi(2\pi \bt) e^{-i \langle 2\pi \bt, \ba \rangle}\,d\bt + O(e^{-c\sqrt{n}})
		\end{align*}
		by applying \eqref{eq:prob-inversion} and then Lemmas \ref{lem:R_n}, \ref{lem:combine-integrals} and \ref{lem:NT} in succession.
	\end{proof}

	\subsection{Approximating the integral over $U$}\label{subsecApproxOverU}
	With Lemma \ref{lem:combined} established, we turn to the integral over $U$.  First we show that an expression in the statement of Theorem \ref{th:LCLT} can be written as a Gaussian integral.  
	\begin{lemma} \label{lem:eval-Gaussian-integral} For any vectors $\bt,\ba \in \R^{d}$ and positive definite $d\times d$ matrix $S$ we have \begin{equation}\label{eq:Gaussian-integral}
		\frac{1}{(2\pi)^{d}}\int_{\R^{d}} \exp\left(-i \langle \bt, \ba - \E \bX\rangle  - \frac{1}{2} \bt^T S \bt \right)\,d\bt = \frac{\exp\left(-\frac{1}{2}(\mathbf{a} - \E \bX)^T S^{-1} (\mathbf{a} - \E \bX)\right)}{\sqrt{(2\pi)^{d}\det(S) } }\,.
		\end{equation}
	\end{lemma}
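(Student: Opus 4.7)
The plan is to reduce the claim to the standard Gaussian integral identity $\int_{\R^d} e^{-\frac{1}{2}\bt^T S \bt}\,d\bt = \sqrt{(2\pi)^d / \det S}$ by completing the square in the exponent and justifying a contour shift.

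First I would introduce the shorthand $\bv = \ba - \E \bX$ and rewrite the exponent as $-\tfrac{1}{2}\bt^T S \bt - i \bt^T \bv$. Then I would complete the square: setting $\bu = \bt + i S^{-1}\bv$, a direct computation gives
\[
\bu^T S \bu = \bt^T S \bt + 2 i \bt^T \bv - \bv^T S^{-1} \bv,
\]
so that
\[
-\tfrac{1}{2}\bt^T S \bt - i \bt^T \bv \;=\; -\tfrac{1}{2}\bu^T S \bu \;-\; \tfrac{1}{2}\bv^T S^{-1}\bv.
\]
The factor $\exp(-\tfrac{1}{2}\bv^T S^{-1}\bv)$ is constant in $\bt$ and pulls out of the integral, matching the numerator of the right-hand side of~\eqref{eq:Gaussian-integral}.

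Next I would show that the remaining integral $\int_{\R^d} e^{-\frac{1}{2}\bu^T S \bu}\,d\bt$, where $\bu = \bt + iS^{-1}\bv$, equals $\int_{\R^d} e^{-\frac{1}{2}\bt^T S \bt}\,d\bt$. Since $S$ is real positive definite and the integrand is entire in each component of $\bt$ with Gaussian decay in $|\mathrm{Re}\,\bt|$ uniformly in a bounded strip of imaginary parts, the multivariate contour (really, a product of one-dimensional contours after diagonalizing $S$) may be shifted from $\R^d + i S^{-1}\bv$ back to $\R^d$ by applying Cauchy's theorem component by component and sending the cutoff to infinity. Finally I would invoke the standard identity $\int_{\R^d} e^{-\frac{1}{2}\bt^T S \bt}\,d\bt = \sqrt{(2\pi)^d / \det S}$ (obtained, if desired, by diagonalizing $S$ and reducing to $d$ one-dimensional Gaussian integrals), divide through by $(2\pi)^d$, and collect the factors to obtain the right-hand side of~\eqref{eq:Gaussian-integral}.

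This is essentially bookkeeping; the only mildly nontrivial step is justifying the complex shift of contour, which is routine given the Gaussian decay of $e^{-\frac{1}{2}\bt^T S\bt}$ along horizontal strips. No step involves new ideas beyond standard Fourier/Gaussian calculus.
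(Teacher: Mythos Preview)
Your proposal is correct and follows essentially the same route as the paper: complete the square in the exponent, invoke Cauchy's theorem to shift the contour back to $\R^d$, and evaluate the remaining real Gaussian integral. The only cosmetic difference is that the paper first factors $S = M^T M$ and substitutes $\mathbf{s} = M\bt + i(M^{-1})^T(\ba - \E\bX)$ so that the residual integral is the spherical Gaussian $\int_{\R^d} e^{-\|\mathbf{s}\|^2/2}\,d\mathbf{s}$, whereas you keep $S$ in the quadratic form and quote the formula $\int_{\R^d} e^{-\frac{1}{2}\bt^T S \bt}\,d\bt = \sqrt{(2\pi)^d/\det S}$ directly.
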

	\begin{proof}
		Since $S$ is positive definite, there is an invertible matrix $M$ so that $S = M^T M$.  We may thus write \begin{align*}
		-i \bt^T (\ba - \E \bX) - \frac{1}{2} \bt^T S \bt 	&= - \frac{1}{2}(M \bt + i (M^{-1})^T(\ba - \E \bX ))^T(M \bt + i (M^{-1})^T(\ba - \E \bX )) \\
		&\qquad - \frac{1}{2}(\ba - \E \bX)^T (S)^{-1} (\ba - \E \bX)\,.
		\end{align*}
		
		Setting $\mathbf{s} = M \mathbf{t} + i(M^{-1})^T (\mathbf{a} - \E \bX)$ and using Cauchy's integral theorem shows \begin{align*}
		\int_{\R^{d}} &\exp\left(-i \langle \bt, \ba - \E \bX\rangle  - \frac{1}{2} \bt^T S \bt \right)\,d\bt\\
		&\quad= \frac{\exp\left(-\frac{1}{2}(\mathbf{a} - \E \bX)^T (S)^{-1} (\mathbf{a} - \E \bX)\right)}{\det(M)}\int_{\R^{d}}\exp(-\frac{1}{2}\| \mathbf{s}\|^2 )\,d\mathbf{s} \,.
		\end{align*}
		
		Evaluating the Gaussian integral as $(2\pi)^{d/2}$ and recalling $\det(M) = \det(S)^{1/2}$ completes the proof of \eqref{eq:Gaussian-integral}.
	\end{proof}
	
	Recall that for a given $\delta > 0$, we have defined $U = U(\delta) = \{\bt : |t_j| \leq \delta n^{-j/2}  \}$.  With Lemmas \ref{lem:combined} and \ref{lem:eval-Gaussian-integral} in tow, it is sufficient to show that for some $\delta > 0$, we have \begin{equation}\label{eq:need}
	\int_{\R^{J}}\left|\varphi(\bt)\cdot\one_{\{\bt \in U  \}} - \exp\left(  i\langle \bt,\E \bX \rangle - \frac{1}{2} \bt^T S \bt \right)  \right|\,d\bt = o(n^{- \sum_{j \in J}(j/2+1/4)})\,.
	\end{equation}
	
	This bound will follow from showing the following estimates:
	\begin{align}\label{eq:u-comp}
	\int_{U^c} \exp\left(-\frac{1}{2}\bt^T S\bt \right) \,d\bt &= O(e^{-c \sqrt{n}}) \\
	\label{eq:U-minus-V}n^{- \sum_{j \in J}(j/2+1/4)}\int_{U \setminus V} \left|\varphi(\bt) - \exp\left(  i\langle \bt,\E \bX \rangle - \frac{1}{2} \bt^T S \bt \right)\right| \,d\bt &= O(e^{-c' (\log n)^2}  ) \\
	\label{eq:V} n^{- \sum_{j \in J}(j/2+1/4)}\int_{V} \left|\varphi(\bt) - \exp\left(  i\langle \bt,\E \bX \rangle - \frac{1}{2} \bt^T S \bt \right)\right| \,d\bt &= O\left(\frac{(\log n)^3}{n^{1/8}} \right)
	\end{align}
	where we have defined $V := \{\bt : |t_j| \leq (\log n) n^{-j/2-1/4} \}$.
	
	To show \eqref{eq:u-comp} along with the bound on $\det(S)$ stated in Theorem \ref{th:LCLT}, we demonstrate upper and lower bounds on $S$ when viewed as a quadratic form.  In what follows, we set $\sigma_k^2 = \Var(Y_k) = (1 - p_k)/p_k^2$ and $S_k = \Cov( (Y_k k^j)_{j \in J})$.

	\begin{lemma} \label{lem:Sigma-scale}
		In the context of Theorem \ref{th:LCLT}, define the matrix $\widetilde{S}$ via $\widetilde{S}_{i,j} = S_{i,j}/n^{(i+j+1)/2}$ for all $i,j \in J$.  Then there is a constant $C = C(K) > 0$ so that \begin{equation}\label{eq:sigma-bound}
		C^{-1} \| \ba \|^2 \leq  \ba^T \widetilde{S} \ba \leq C \| \ba \|^2
		\end{equation}
		for all $\ba \in \R^{J}$.  In particular, $\det(S) = \Theta( n^{ \sum_{j \in J}(j + 1/2)}  )$.
	\end{lemma}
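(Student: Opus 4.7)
The plan is to recognize $\widetilde{S}$ as a Riemann-sum approximation to the matrix $\Sigma$ defined in \eqref{eqSigmaDef}, and then use positive-definiteness of $\Sigma$ together with compactness of $K$ to extract uniform quadratic-form bounds.

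First, I would rewrite the entries of $\widetilde S$ by changing variables $x = k/\sqrt{n}$. Setting $\bbeta_\ell := \bbh_\ell n^{\ell/2}$ (so $\bbeta \in K$ by hypothesis), one obtains
\[
\widetilde S_{i,j} \;=\; \frac{1}{\sqrt n}\sum_{k\ge 1} \Bigl(\tfrac{k}{\sqrt n}\Bigr)^{\!i+j}\,\frac{\exp\!\bigl(\sum_{\ell\in J}\bbeta_\ell(k/\sqrt n)^\ell\bigr)}{\bigl(\exp\!\bigl(\sum_{\ell\in J}\bbeta_\ell(k/\sqrt n)^\ell\bigr)-1\bigr)^{2}},
\]
which is a Riemann sum with mesh $1/\sqrt n$ for the integral $\Sigma_{i,j}(\bbeta)$. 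I would then verify uniform convergence $\widetilde S_{i,j}\to \Sigma_{i,j}(\bbeta)$ over $\bbeta\in K$. The integrand behaves like $x^{i+j-2\jmin}/\bbeta_{\jmin}^2$ near $0$ (which is integrable since $i,j\ge \jmin$) and is exponentially small at infinity; both bounds are uniform on the compact set $K$, so a standard dominated-convergence argument handles the tail and a local estimate on $[0,1/\sqrt n]$ handles the missing initial piece.

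Second, I would show $\Sigma(\bbeta)$ is positive definite for each $\bbeta\in K$. For any nonzero $\mathbf a\in\R^J$,
\[
\mathbf a^{T}\Sigma(\bbeta)\mathbf a
\;=\;\int_0^\infty \Bigl(\sum_{j\in J} a_j x^j\Bigr)^{\!2}\,\frac{\exp\!\bigl(\sum_\ell \bbeta_\ell x^\ell\bigr)}{\bigl(\exp\!\bigl(\sum_\ell \bbeta_\ell x^\ell\bigr)-1\bigr)^2}\,dx \;>\;0,
\]
since the polynomial $\sum_j a_j x^j$ has only finitely many zeros and the weight is strictly positive on $(0,\infty)$. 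Because the entries of $\Sigma(\bbeta)$ depend continuously on $\bbeta\in K$, the smallest eigenvalue $\lambda_{\min}(\bbeta)$ is a positive continuous function on the compact set $K$, hence bounded below by some $\lambda_0>0$; similarly the operator norm is bounded above by some $C_0<\infty$ on $K$.

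Third, I would combine the two steps. For $n$ sufficiently large (depending only on $K$), the uniform convergence gives $\|\widetilde S-\Sigma(\bbeta)\|_{\mathrm{op}}\le \lambda_0/2$, so every eigenvalue of $\widetilde S$ lies in $[\lambda_0/2,\,C_0+\lambda_0/2]$, establishing \eqref{eq:sigma-bound} with $C=\max(2/\lambda_0,\,C_0+\lambda_0/2)$. The determinant bound $\det S=\Theta(n^{\sum_{j\in J}(j+1/2)})$ then follows immediately from $\det S = n^{\sum_{j\in J}(j+1/2)}\det\widetilde S$ together with $(\lambda_0/2)^{|J|}\le \det\widetilde S\le (C_0+\lambda_0/2)^{|J|}$.

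The main obstacle is the uniformity of the Riemann-sum convergence over $\bbeta\in K$, as opposed to a pointwise statement. The delicate region is near $x=0$: when $\jmin\ge 1$ the integrand carries a factor $\bbeta_{\jmin}^{-2}$ which is controlled only because $\bbeta_{\jmin}$ stays bounded away from zero on $K$ (a hypothesis implicit in the setup, since otherwise the polynomial would fail to be positive on $(0,\infty)$ and the integrals defining $\Sigma$ would diverge). Once this uniformity is in hand the rest of the argument is soft and follows from the spectral theorem and continuity.
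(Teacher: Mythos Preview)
Your proposal is correct and follows essentially the same approach as the paper: express $\widetilde{S}$ as a Riemann sum for the integral matrix $\Sigma$, use compactness of $K$ to get uniform convergence, and invoke positive-definiteness of $\Sigma$. The only cosmetic differences are that the paper works directly with the quadratic form $\ba^T\widetilde{S}\ba$ on the unit sphere rather than entrywise via eigenvalue perturbation, and it disposes of the finitely many small $n$ (which your ``for $n$ sufficiently large'' leaves unaddressed) by a separate one-line compactness-and-continuity remark.
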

	\begin{proof}
		By rescaling, assume without loss of generality that $\| \ba \| = 1$.
		Compute \begin{equation}\label{eq:quadratic-expansion}
		\ba^T \widetilde{S} \ba = \frac{1}{\sqrt{n}}\sum_{k = 1}^\infty \sigma_k^2 \left(\sum_{j \in J} a_j (k/\sqrt{n})^j  \right)^2
		\end{equation}
		where $\sigma_k^2 = \Var(Y_k)$.  If we define $$f_{\widetilde{\bbeta}}(t) = \frac{\exp\left(- \sum_{j \in J} \widetilde{\bbeta}_j t^j \right)}{\left(1- \exp\left(- \sum_{j \in J} \widetilde{\bbeta}_j t^j \right) \right)^2}$$
		then $\sigma_k^2 = f_{\widetilde{\bbeta}}(k/\sqrt{n})$, where we have written $\widetilde{\bbeta}= (\bbh_j n^{j/2})_{j \in J}$.  We then have 
		\begin{equation}\label{eq:quadratic-int}
		\ba^T \widetilde{\Sigma} \ba = \frac{1}{\sqrt{n}} \sum_{k \geq 1} f_{\bbt}(k/\sqrt{n}) \left( \sum_{j\in J} a_j (k/\sqrt{n})^j \right)^2 = \int_0^\infty f_{\bbt}(x) \left(\sum_{j \in J} a_j x^j \right)^2\,dx  + E
		\end{equation}
		where $$E \leq \frac{1}{\sqrt{n}}(g_{\bbt}(1/\sqrt{n}) + g_{\bbt}'(1/\sqrt{n}) + \int_0^{1/\sqrt{n}} g_{\bbt}(x)\,dx + \frac{1}{n}\int_{0}^\infty |g_{\bbt}''(x)| \,dx $$
		where $g_{\bbt}(x) = f_{\bbt}(x)(\sum_{j \in J} a_j x^j)^2$.  Bounding each term in a straightforward manner gives $|E| =  O(1/\sqrt{n})$ where the error is uniform since $\bbt$ varies in a compact set. 	
		
		Since $\mathbf{a}$ and $\bbt$ vary over compact sets, the integral on the right-hand-side of \eqref{eq:quadratic-int} is bounded above and below away from $0$; thus, for $n$ sufficiently large, \eqref{eq:quadratic-int} demonstrates uniform bounds above and below on $\ba^T \widetilde{\Sigma} \ba$.  For each remaining small $n$, the sum $$\frac{1}{\sqrt{n}} \sum_{k \geq 1} f_{\blt}(k/\sqrt{n}) \left( \sum_{j \in J} a_j (k/\sqrt{n})^j \right)^2$$ may be uniformly bounded above and below by compactness and continuity, thereby completing the proof. 
	\end{proof}

	The bound \eqref{eq:u-comp} now follows easily from Lemma \eqref{lem:Sigma-scale}: for a given $\bt$ define $\mathbf{s}$ via $s_j = t_j n^{j/2 + 1/4}$; then 
	\begin{align*}
	\int_{U^c} \exp\left(-\frac{1}{2}\bt^T S\bt \right) \,d\bt &\leq \int_{\R^{J} \setminus [-\delta n^{1/4},\delta n^{1/4}]^{J}} \exp\left(-c \| \mathbf{s} \|^2 \right) \,d\mathbf{s} \\
	&= O(e^{-c' \sqrt{n}})
	\end{align*}
	for some constant $c' > 0$.  The next two sections show \eqref{eq:U-minus-V} and \eqref{eq:V}.

	\subsection{Establishing \eqref{eq:U-minus-V}}\label{subsecU-minus-V}

	To show \eqref{eq:U-minus-V}, we will show that for a geometric variable $Y$ with mean bounded above we can compare the characteristic function of $Y$ to that of a corresponding Gaussian with an error depending $\Var(Y)$  (this is Lemma \ref{lem:cumulant-bound} below).  In the problem at hand, our geometric variables actually have unbounded means; our first step is to show that while the means can be unbounded, the bulk of the contribution to the variance $S$ comes from variables of bounded mean.   In this direction, we alter the proof of Lemma \ref{lem:Sigma-scale} to show that the contribution of the variance from the first $\eps \sqrt{n}$ terms can be made to be less than half provided $\eps$ is small enough:
	
	\begin{lemma}\label{lem:truncate-var}
		There exists an $\eps > 0$ so that for all $\bt \in U$ we have $$ \bt^T S \bt \geq \frac{1}{2} \bt^T S^{(\eps)} \bt$$
		where  $$S^{(\eps)} := \sum_{k \geq  \sqrt{n} \eps}S_k\,.$$
	\end{lemma}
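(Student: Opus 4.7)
The plan is to verify the stated inequality by pure positive-semidefiniteness considerations; no use needs to be made of the hypothesis $\bt \in U$, of any particular smallness of $\eps$, or of quantitative behaviour of the geometric variables $Y_k$. The key observation is that for each $k \ge 1$, the $|J| \times |J|$ matrix $S_k = \Cov((Y_k k^j)_{j \in J})$ has the explicit rank-one form $S_k = \Var(Y_k)\,\mathbf{k}\mathbf{k}^T$, where $\mathbf{k} := (k^j)_{j \in J}$. In particular $S_k \succeq 0$ and
$$\bt^T S_k \bt \;=\; \Var(Y_k) \cdot \Big(\sum_{j \in J} t_j k^j\Big)^2 \;\ge\; 0$$
for every $\bt \in \R^J$.

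Next I would decompose $S = S^{(\eps)} + \sum_{1 \le k < \sqrt{n}\,\eps} S_k$ and note that both summands are sums of PSD matrices and hence are themselves PSD. Consequently $\bt^T(S - S^{(\eps)})\bt \ge 0$ and $\bt^T S^{(\eps)}\bt \ge 0$, and these two facts combine to give
$$\bt^T S \bt \;\ge\; \bt^T S^{(\eps)} \bt \;\ge\; \tfrac{1}{2}\, \bt^T S^{(\eps)} \bt,$$
uniformly over all $\bt \in \R^J$ (in particular over $\bt \in U$) and for every $\eps > 0$. Thus the existence statement of the lemma is witnessed by any positive choice of $\eps$ whatsoever.

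The main ``obstacle'' is therefore nonexistent: the inequality as worded is a consequence of the elementary fact that each covariance matrix $S_k$ is positive semidefinite, and no further machinery is required. I would expect that the genuinely nontrivial bound ultimately needed for the characteristic-function comparison near~\eqref{eq:U-minus-V} is the \emph{reverse} estimate $\bt^T S^{(\eps)} \bt \ge \tfrac12 \bt^T S \bt$, which asserts that the tail $k \ge \sqrt n\,\eps$ already carries most of the variance; that statement would require quantitatively bounding the contribution of the first $\sqrt n\,\eps$ summands to the quadratic form via a Riemann-sum analysis in the spirit of Lemma~\ref{lem:Sigma-scale}, and would genuinely use $\bt \in U$ together with smallness of $\eps$. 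The lemma as literally stated, however, is proved by the two-line decomposition just described.
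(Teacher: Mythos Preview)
Your observation is entirely correct: as literally stated, the inequality $\bt^T S \bt \ge \tfrac12 \bt^T S^{(\eps)}\bt$ follows immediately from the positive semidefiniteness of each $S_k$, for every $\eps>0$ and every $\bt$, with no work required. Your two-line argument is valid.

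You have also correctly diagnosed that the lemma is misstated. The paper's own proof establishes the \emph{reverse} bound $\bt^T S^{(\eps)}\bt \ge \tfrac12 \bt^T S \bt$: it shows (after rescaling to $\widetilde S$) that the contribution of the initial segment $\{k < \eps\sqrt n\}$ to the quadratic form is approximated by $\int_0^\eps (\sum_j s_j x^j)^2 \sigma_x^2\,dx$, which tends to $0$ uniformly on the unit sphere as $\eps\to 0^+$, while the full integral is bounded below by a positive constant. This is exactly the Riemann-sum/compactness argument you anticipated. The prefatory sentence before the lemma (``the contribution of the variance from the first $\eps\sqrt n$ terms can be made to be less than half'') and the use of the lemma in the subsequent Corollary (where one needs $\sum_{x\ge \eps\sqrt n}\log|\varphi_x(\bt)| \lesssim -\tfrac12\bt^T S^{(\eps)}\bt \lesssim -\tfrac14\bt^T S\bt$) both confirm that the intended statement is the reverse inequality.

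So: your proof is correct for the statement as written, and your commentary identifying the likely intended statement and sketching its proof matches what the paper actually does.
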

	\begin{proof}
		Define $\mathbf{s}$ via $s_j = t_j n^{j/2+1/4}$.  Then $$\bt^T S \bt = \mathbf{s}^T \widetilde{S} \mathbf{s}\,. $$
		
		We then want to show $$\mathbf{s}^T \widetilde{S} \mathbf{s} \geq \frac{1}{2} \mathbf{s}^T \widetilde{S}^{(\eps)} \mathbf{s}$$ where $\widetilde{S}^{(\eps)}$ is defined in the analogous way.  Since both sides are homogeneous in $\mathbf{s}$, assume without loss of generality that $\mathbf{s}$ is a unit vector. 
		
		The proof of Lemma \ref{lem:Sigma-scale} shows \begin{align*}
		\mathbf{s}^T \widetilde{S}^{(\eps)} \mathbf{s} &= \int_\eps^\infty \left(\sum_{j \in J} s_j x^j \right)^2 \sigma_x^2\,dx + O(1/\sqrt{n}) 
		\end{align*}
		where the error may be taken uniformly over $\eps$.  By compactness we have that for each $\eps > 0$ that $$ \max_{\|\mathbf{s}\|=1} \int_0^\eps \left(\sum_{j \in J} s_j x^j \right)^2 \sigma_x^2 \,dx$$
		is bounded above and tends to zero as $\eps \to 0^+$.  Similarly, we also have $$\min_{\|\mathbf{s}\|=1} \int_0^\infty \left(\sum_{j \in J} s_j x^j \right)^2 \sigma_x^2 \,dx =: c > 0\,.$$
		
		Choosing $\eps$ small enough then gives the desired bound.
	\end{proof}

	Seeking to show our characteristic function of $\bX$ is close the characteristic function of the corresponding Gaussian, we need a tail bound on the cumulant generating function $\log \E e^{it Y}$ for geometric random variables.  This will allow us to approximate the characteristic function for each $Y$ with that of a Gaussian with mean $\E Y$ and variance $\Var(Y)$. 	
	
	\begin{lemma}\label{lem:cumulant-bound}
		Fix $\eps > 0$.  Let $Y$ be a geometric random variable for some $p \in [\eps,1)$.  Then there are constants $C, c > 0$ so that for all $|t| \leq c$, we have $$\left| \log \E e^{i t Y} - \left(i t \E Y - \frac{t^2}{2} \Var(Y)\right)\right| \leq C \Var(Y) |t|^3\,.$$
	\end{lemma}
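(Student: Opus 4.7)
I will prove the bound by a direct Taylor expansion of the cumulant generating function. The key observation is that the third cumulant of a geometric random variable is comparable to its variance (up to a factor depending only on $p$), and this comparison extends to a neighborhood of $0$.

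Let $q = 1-p$ and write the characteristic function explicitly as $\varphi(t) = \E e^{itY} = p/(1 - q e^{it})$. Since $|qe^{it}| = q < 1$, the denominator never vanishes for real $t$, so the cumulant generating function $\psi(t) := \log \varphi(t) = \log p - \log(1 - qe^{it})$ is well-defined and analytic in a complex neighborhood of the real line (choosing the principal branch of $\log$). A direct computation gives $\psi(0) = 0$, $\psi'(0) = iq/p = i\E Y$, and $\psi''(0) = -q/p^2 = -\Var(Y)$. So by Taylor's theorem with Lagrange remainder,
\[
\psi(t) - \bigl(it\E Y - \tfrac{t^2}{2}\Var(Y)\bigr) = \tfrac{1}{6}\psi'''(\xi) t^3
\]
for some $\xi$ between $0$ and $t$. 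It therefore suffices to show $|\psi'''(\xi)| \le C \Var(Y)$ uniformly for $|\xi| \le c$, where $c, C$ depend only on $\eps$.

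Setting $u = qe^{it}$ (so $u' = iu$), a direct differentiation of $\psi'(t) = iu/(1-u)$ yields $\psi''(t) = -u/(1-u)^2$ and then
\[
\psi'''(t) = \frac{-iu(1+u)}{(1-u)^3}.
\]
Since $|u| = q$, we have $|u(1+u)| \le 2q$. For the denominator, $|1 - u| \ge 1 - q - q|e^{it} - 1| \ge p - q|t|$, so choosing $c = \eps/2 \le p/(2q)$ gives $|1-u| \ge p/2$ for all $|t| \le c$. Therefore
\[
|\psi'''(t)| \;\le\; \frac{2q}{(p/2)^3} \;=\; \frac{16 q}{p^3} \;=\; \frac{16}{p}\cdot\Var(Y) \;\le\; \frac{16}{\eps}\Var(Y)
\]
for all $|t| \le c$. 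Substituting this into the remainder formula yields the claim with $C = 16/(6\eps)$.

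There is no real obstacle here: the main thing to verify carefully is the uniform lower bound on $|1-u|$, which is where the hypothesis $p \ge \eps$ is used to make both $c$ and $C$ depend only on $\eps$. The appearance of $\Var(Y)$ (rather than a constant) on the right-hand side comes out naturally from the identity $q/p^3 = \Var(Y)/p$, which is exactly the factor we want; this is what makes the bound useful even when $p \to 1$ and $Y$ degenerates to a point mass.
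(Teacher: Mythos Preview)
Your proof is correct and takes a cleaner, more direct route than the paper. The paper centers $Y$, bounds $|\E e^{it(Y-\mu)} - (1 - \sigma^2 t^2/2)|$ using the inequality $|e^{ix} - 1 - ix + x^2/2| \le |x|^3$ together with an explicit computation showing $\E|Y-\mu|^3 \le C_\eps \sigma^2$, and then passes from a bound on $\E e^{it(Y-\mu)}$ to a bound on $\log \E e^{it(Y-\mu)}$ via a short but slightly fiddly chain of inequalities (including a separate observation that $|\arg \E e^{itY}|$ stays bounded). Your argument bypasses all of this by differentiating the explicit formula $\psi(t) = \log p - \log(1 - qe^{it})$ three times and reading off the bound $|\psi'''| \le 16q/p^3 = (16/p)\Var(Y)$ directly; the factor of $\Var(Y)$ falls out of the algebra rather than from a moment computation. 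Your approach is shorter and makes the dependence on $\eps$ completely explicit.

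One small technical point: the Lagrange form of the remainder, with a single intermediate point $\xi$, is not literally valid for complex-valued functions. Since you immediately bound $|\psi'''|$ uniformly on $[-c,c]$ anyway, the argument goes through verbatim if you invoke the integral form of the remainder,
\[
\psi(t) - \Bigl(it\E Y - \tfrac{t^2}{2}\Var(Y)\Bigr) \;=\; \frac{t^3}{2}\int_0^1 (1-s)^2\,\psi'''(st)\,ds,
\]
which holds for smooth complex-valued functions and gives the same $|t|^3/6$ prefactor. (Alternatively, and even more simply: since $\cos t \le 1$ one has $|1 - qe^{it}|^2 = 1 - 2q\cos t + q^2 \ge (1-q)^2 = p^2$, so in fact $|1-u| \ge p$ for all real $t$, and no restriction to $|t| \le c$ is needed for the denominator bound.)
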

	\begin{proof}
		For convenience, write $\mu = \E Y$ and $\sigma^2 = \Var(Y)$ and set $\widehat{Y} = Y - \mu$.  Then \begin{align*}
		\left|\E e^{i t (Y - \mu)} - (1 - \sigma^2 t^2/2) \right| \leq |t|^3 \E \left|Y - \mu\right|^3 \,.
		\end{align*}
		We bound $$\E|Y - \mu|^3 \leq 8 (\E|Y^3| + \mu^3) = 8\left(\frac{(1 - p)(p^2 - 6p + 6)}{p^3} + \frac{(1-p)^3}{p^3} \right) \leq C_\eps \frac{1 - p}{p^2} = C_\eps \sigma^2\,.$$
		Note $\E e^{itY} = \frac{p}{1 + (1 - p)e^{it}}$ and so $|\arg \E e^{itY}| = |\arg(1 + (1 - p)e^{it})|$ which is uniformly bounded since $1 - p$ is uniformly bounded away from $1$.  Write \begin{align*}
		\left| \log \E e^{i t (Y - \mu)} + \frac{t^2}{2} \sigma^2\right| &= \left| \log \left(\E e^{i t (Y - \mu)}  e^{\sigma^2 t^2 / 2}\right)\right| \\
		&= \left|\log\left(1 - \left(\frac{e^{-t^2 \sigma^2 / 2} - \E e^{it(Y - \mu)}}{e^{-t^2 \sigma^2/2}} \right)\right)\right|  \\
		&\leq C \left|\frac{ \E e^{i t(Y - \mu)}  - e^{-t^2 \sigma^2 / 2}}{e^{-t^2 \sigma^2 / 2}} \right| \\
		&\leq C' \left| \E e^{i t(Y - \mu)} - (1 - t^2 \sigma^2 / 2)\right| + O(|t|^4 \sigma^4) \\
		&\leq C'' |t|^3 \sigma^2 \, ,
		\end{align*}
		where each inequality uses the fact that $|t|$ and $\sigma$ are uniformly bounded above.
	\end{proof}

	In a similar vein to Lemma \ref{lem:Sigma-scale}, we need the following simple bound whose proof is omitted since it is essentially the same as that of Lemma \ref{lem:Sigma-scale}. \begin{lemma}\label{lem:sigma-int}
		$$\frac{1}{\sqrt{n}} \sum_{k \geq 1} \sigma_k^2 \left(\sum_{j \in J} (k/\sqrt{n})^j \right)^3 = O(1)\,.$$
	\end{lemma}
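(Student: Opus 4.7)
The plan is to recognize the sum as a Riemann sum for a convergent integral, following exactly the approach of Lemma~\ref{lem:Sigma-scale}. First I would substitute $x = k/\sqrt n$ and set $\bbt = (\bbh_j n^{j/2})_{j \in J}$, which by hypothesis lies in a compact subset $K'$ of $\R^J$ on which the polynomial $\sum_j \bbt_j x^j > 0$ for $x > 0$ and $\bbt_{\jmin} > 0$. Writing $h(x) = \bigl(\sum_{j \in J} x^j\bigr)^3$ and
\[
f_{\bbt}(x) = \frac{\exp(-\sum_{j \in J} \bbt_j x^j)}{\bigl(1-\exp(-\sum_{j \in J} \bbt_j x^j)\bigr)^2},
\]
a direct computation gives $\sigma_k^2 = f_{\bbt}(k/\sqrt n)$, so the sum becomes $\frac{1}{\sqrt n}\sum_{k \geq 1} f_{\bbt}(k/\sqrt n) h(k/\sqrt n)$, which is a Riemann sum for $I(\bbt) := \int_0^\infty f_{\bbt}(x) h(x)\,dx$.

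Next I would show that $I(\bbt) = O(1)$ uniformly for $\bbt \in K'$. At infinity, the polynomial $\sum_j \bbt_j x^j$ grows (with positive leading coefficient), so $f_{\bbt}$ decays exponentially and dominates the polynomial growth of $h$. Near $x = 0$, if $\jmin = 0$ then $f_{\bbt}$ and $h$ are both bounded; if $\jmin \geq 1$ then $\sum_j \bbt_j x^j \sim \bbt_{\jmin} x^{\jmin}$, so $f_{\bbt}(x) = O(x^{-2\jmin})$ while $h(x) = O(x^{3\jmin})$, giving an integrand of order $x^{\jmin}$, which is integrable. Uniformity of both the integrability and the resulting $O(1)$ bound follows from compactness of $K'$ together with $\bbt_{\jmin}$ being bounded away from $0$.

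Finally I would bound the Riemann sum error $\bigl|\frac{1}{\sqrt n}\sum_{k \geq 1} f_{\bbt}(k/\sqrt n) h(k/\sqrt n) - I(\bbt)\bigr|$ by $O(n^{-1/2})$ via the same Euler--Maclaurin-type estimate used in Lemma~\ref{lem:Sigma-scale}, controlling boundary terms at $x = 1/\sqrt n$, the contribution of the short initial interval $[0,1/\sqrt n]$, and the integral of $|(f_{\bbt} h)''|$. The main technical point is the mild singularity of $f_{\bbt}$ at $x = 0$ when $\jmin \geq 1$; however, this is exactly offset by the cubic factor $h$ vanishing to order $3\jmin$, so $f_{\bbt}h$ is comparable to $x^{\jmin}$ near $0$ and both boundary and derivative contributions remain $O(1)$ uniformly on $K'$.
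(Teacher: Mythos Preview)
Your proposal is correct and follows exactly the approach the paper intends: the paper omits the proof of this lemma, stating only that it ``is essentially the same as that of Lemma~\ref{lem:Sigma-scale},'' and your argument---rewriting the sum as a Riemann sum for $\int_0^\infty f_{\bbt}(x)\bigl(\sum_{j\in J} x^j\bigr)^3\,dx$, checking integrability at $0$ and $\infty$, and controlling the Euler--Maclaurin error---is precisely that adaptation.
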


	Now define $T_k:=\{ \bt : k \leq \max_{j} n^{j/2 + 1/4} |t_j| \leq k+1 \}$ and note that $U \setminus V = \bigcup_{ k = 1}^{\delta n^{1/4}} T_k$.  
	\begin{lemma}\label{lem:Sk}
		There exist constants $c,C$ so that for $\bt \in T_k$ we have $$\bt^T S \bt \geq c k^2$$ and $$\sum_{x \geq \eps \sqrt{n}}\left|\log \varphi_r(\bt) - \left(i \langle \bt,\mu_r \rangle - \frac{1}{2}\bt^T S \bt \right) \right| \leq C \delta k^2\,.$$
	\end{lemma}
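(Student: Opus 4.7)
My plan is to handle the two conclusions of the lemma separately, both using tools already assembled in Sections~\ref{subsecApproxOverU}--\ref{subsecU-minus-V}.

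For the lower bound $\bt^T S \bt \geq c k^2$, I would rescale via $s_j := t_j n^{j/2+1/4}$ so that $\bt^T S \bt = \mathbf{s}^T \widetilde{S} \mathbf{s}$, where $\widetilde{S}$ is the rescaled matrix appearing in Lemma~\ref{lem:Sigma-scale}. That lemma gives $\mathbf{s}^T \widetilde{S} \mathbf{s} \geq C^{-1}\|\mathbf{s}\|^2$, and the defining inequality of $T_k$, namely $\max_{j \in J} n^{j/2+1/4}|t_j| \geq k$, forces $\|\mathbf{s}\|_\infty \geq k$ and hence $\|\mathbf{s}\|^2 \geq k^2$. Thus $\bt^T S \bt \geq C^{-1} k^2$. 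This part is an immediate corollary of Lemma~\ref{lem:Sigma-scale}.

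For the summation bound (which I read with the right-hand $\tfrac{1}{2}\bt^T S \bt$ interpreted as the per-term contribution $\tfrac{1}{2}\bt^T S_x \bt = \tfrac{1}{2}\sigma_x^2 Q_\bt(x)^2$, so that the statement is precisely the termwise cumulant estimate needed to establish \eqref{eq:U-minus-V}), my plan is to apply Lemma~\ref{lem:cumulant-bound} to each $Y_x$ with argument $Q_\bt(x)$. For $x \geq \eps\sqrt{n}$, writing $p_x = 1 - \exp(-\sum_j \widetilde{\bbeta}_j (x/\sqrt{n})^j)$ and noting that $\widetilde{\bbeta}$ lies in the compact set $K$ with $\sum_j \widetilde{\bbeta}_j y^j$ positive on $(0,\infty)$, the probability $p_x$ is uniformly bounded away from $0$, so Lemma~\ref{lem:cumulant-bound} yields
\[
\Big|\log \varphi_x(Q_\bt(x)) - \Big(i \mu_x Q_\bt(x) - \tfrac{1}{2}\sigma_x^2 Q_\bt(x)^2\Big)\Big| \leq C \sigma_x^2 |Q_\bt(x)|^3.
\]
Using $\bt \in T_k$ to bound $|Q_\bt(x)|^3 \leq (k+1)^3 n^{-3/4} \big(\sum_{j \in J} (x/\sqrt{n})^j\big)^3$, I would then estimate
\[
\sum_{x \geq \eps \sqrt{n}} \sigma_x^2 |Q_\bt(x)|^3 \leq (k+1)^3 n^{-3/4} \sum_{x \geq \eps \sqrt{n}} \sigma_x^2 \Big(\sum_{j \in J} (x/\sqrt{n})^j\Big)^3,
\]
and show by a Riemann sum argument parallel to that of Lemma~\ref{lem:Sigma-scale} that the inner sum is $O(\sqrt{n})$: setting $y = x/\sqrt{n}$, it is $\sqrt{n}$ times a Riemann sum converging to $\int_\eps^\infty f_{\widetilde{\bbeta}}(y)\big(\sum_j y^j\big)^3 dy$ (with $f_{\widetilde{\bbeta}}$ as in the proof of Lemma~\ref{lem:Sigma-scale}), and this integral converges because $f_{\widetilde{\bbeta}}(y)$ decays super-polynomially at infinity. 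The resulting bound $\sum_x \sigma_x^2 |Q_\bt(x)|^3 = O(k^3 n^{-1/4})$ combined with $k \leq \delta n^{1/4}$ (forced by $\bt \in U$) gives $O(\delta k^2)$, as required.

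The main obstacle is ensuring the cumulant bound holds uniformly over all $x \geq \eps\sqrt{n}$: Lemma~\ref{lem:cumulant-bound} is stated under the assumption $|t| \leq c$, but $|Q_\bt(x)|$ can exceed this threshold once $x \gg \sqrt{n}$. The cleanest workaround is to observe that the same bound holds globally without any smallness hypothesis on $|t|$: expanding $\log \varphi_x(t) = \sum_{n \geq 1}(1-p_x)^n(e^{int}-1)/n$ and applying the universal estimate $|e^{iz} - 1 - iz + z^2/2| \leq |z|^3/6$ termwise gives $|\log\varphi_x(t) - (i\mu_x t - \tfrac{1}{2}\sigma_x^2 t^2)| \leq C\sigma_x^2 |t|^3$ whenever $p_x$ is bounded below, with $C$ depending only on that lower bound. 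Alternatively, one can split the sum at $x = M\sqrt{n}$ for a large constant $M$ and bound the tail crudely using the super-polynomial decay of $\sigma_x^2$ in $x/\sqrt{n}$, which dominates the polynomial growth of $|Q_\bt(x)|$.
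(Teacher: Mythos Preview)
Your proposal is correct and follows essentially the same route as the paper: the first inequality is deduced from Lemma~\ref{lem:Sigma-scale} via the rescaling $s_j = t_j n^{j/2+1/4}$, and the second by applying Lemma~\ref{lem:cumulant-bound} termwise, bounding $|Q_\bt(x)|^3$ using the definition of $T_k$, and controlling the resulting sum $\sum_x \sigma_x^2(\sum_j (x/\sqrt{n})^j)^3$ by a Riemann-sum comparison (this is exactly the content of Lemma~\ref{lem:sigma-int}, whose proof the paper omits for the same reason you give). You are in fact more careful than the paper on one point: the paper invokes Lemma~\ref{lem:cumulant-bound} without checking the hypothesis $|t|\le c$, which can fail for $x\gg\sqrt{n}$, and your series-expansion argument $\log\varphi_x(t)=\sum_{m\ge1}(1-p_x)^m(e^{imt}-1)/m$ together with $|e^{iz}-1-iz+z^2/2|\le|z|^3/6$ yields the bound $C\sigma_x^2|t|^3$ globally whenever $p_x$ is bounded below, cleanly closing that gap.
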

	\begin{proof}
		The first statement follows from \eqref{eq:sigma-bound}.  For the second, note that for $\bt \in T_k$ we have \begin{align*}
		\sum_{x \geq \eps \sqrt{n}} \left|\log \varphi_x(\bt) - \left(i \langle \bt,\E Y_x \rangle - \frac{1}{2}\bt^T S_x \bt \right) \right| &\leq C \sum_{x \geq \eps \sqrt{n}} \sigma_x^2 |Q_{\bt}(x)|^3 \\
		&\leq C \sum_{x \geq 1} n^{-3/4}\sigma_x^2 \left(\sum_{j \in J} (x/\sqrt{n})^j (k+1) \right)^3 \\
		&\leq C' k^3 n^{-3/4} \sum_{x \geq 1} \sigma_x^2 \left(\sum_{j \in J} (x/\sqrt{n})^j  \right)^3 \\
		&\leq C' \delta k^2 n^{-1/2} \sum_{x \geq 1} \sigma_x^2 \left(\sum_{j\in J} (x/\sqrt{n})^j  \right)^3 \\
		&\leq C'' \delta k^2 \,,
		\end{align*}
		where the first bound is by Lemma \ref{lem:cumulant-bound} 
		and the last bound is via Lemma \ref{lem:sigma-int}.
		
	\end{proof}
	
	As an immediate result, we see that if $\bt \in T_k \cap U$ for large $k$ then $|\varphi(\bt)|$ is quite small.
	\begin{cor} For $\delta$ sufficiently small, there is a constant $c > 0$ so that for $\bt \in T_k \cap U$ we have $$\left|\varphi(\bt)\right| \leq  e^{-c k^2}\,.$$
	\end{cor}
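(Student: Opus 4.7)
The plan is to take logarithms and exploit that $|\varphi_j(\bt)| \leq 1$ for every $j$, so that $\log|\varphi(\bt)|$ is bounded above by the partial sum over $k \geq \eps\sqrt{n}$ of $\mathrm{Re}\log\varphi_k(\bt)$. On this tail range, Lemma \ref{lem:cumulant-bound} applies to each $\varphi_k$, and its error is controlled in aggregate by Lemma \ref{lem:Sk}; the main term is a quadratic form against the truncated covariance $S^{(\eps)}$, which by Lemma \ref{lem:truncate-var} dominates $\bt^T S \bt$ up to a factor of $2$.

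More precisely, I would first write
\[
\log|\varphi(\bt)| \;=\; \sum_{j \geq 1} \mathrm{Re}\log\varphi_j(\bt) \;\leq\; \sum_{j \geq \eps\sqrt{n}} \mathrm{Re}\log\varphi_j(\bt)
\]
since each summand is non-positive. By Lemma \ref{lem:Sk}, the real part of the sum differs from $-\tfrac{1}{2}\bt^T S^{(\eps)}\bt$ by at most $C\delta k^2$ when $\bt \in T_k$, so
\[
\log|\varphi(\bt)| \;\leq\; -\tfrac{1}{2}\bt^T S^{(\eps)}\bt \,+\, C\delta k^2.
\]
Applying Lemma \ref{lem:truncate-var} (which asserts $\bt^T S^{(\eps)}\bt \geq \tfrac{1}{2}\bt^T S \bt$ for $\bt \in U$) together with the lower bound $\bt^T S \bt \geq c k^2$ from the first part of Lemma \ref{lem:Sk}, I get
\[
\log|\varphi(\bt)| \;\leq\; -\tfrac{c}{4} k^2 + C\delta k^2.
\]
Choosing $\delta$ small enough that $C\delta \leq c/8$ yields $\log|\varphi(\bt)| \leq -(c/8)\,k^2$, which is the claim.

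There is no serious obstacle: all the heavy lifting has already been done in Lemmas \ref{lem:truncate-var}, \ref{lem:cumulant-bound}, and \ref{lem:Sk}. The only subtlety is to verify that Lemma \ref{lem:cumulant-bound} genuinely applies to every $\varphi_x$ for $x \geq \eps\sqrt{n}$, which is already implicit in the proof of Lemma \ref{lem:Sk}: on this range, the parameters $p_x$ stay in $[\eps',1)$ uniformly (since $(\bbh_j n^{j/2}) \in K$) and $|Q_\bt(x)|$ stays below the smallness threshold $c$ of Lemma \ref{lem:cumulant-bound} for $\bt \in U$ and $\delta$ small. Thus choosing $\delta$ small simultaneously enforces the hypothesis of Lemma \ref{lem:cumulant-bound} and absorbs the cubic error into the quadratic main term, giving the stated exponential bound.
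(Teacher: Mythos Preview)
Your proof is correct and follows essentially the same route as the paper: drop the factors with $x<\eps\sqrt{n}$, use Lemma~\ref{lem:Sk} to compare the tail of $\log|\varphi|$ to $-\tfrac12\bt^TS^{(\eps)}\bt$ with error $C\delta k^2$, invoke Lemma~\ref{lem:truncate-var} and the first part of Lemma~\ref{lem:Sk} to get $-\tfrac{c}{4}k^2+C\delta k^2$, and choose $\delta$ small. Note that you have (correctly) used Lemma~\ref{lem:truncate-var} in the form $\bt^TS^{(\eps)}\bt\ge\tfrac12\bt^TS\bt$, which is the direction the paper's own argument requires; the displayed inequality in the statement of that lemma is evidently a typo, since $S\succeq S^{(\eps)}$ makes the stated direction trivial.
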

	\begin{proof}
		Bound	
		\begin{align*}
		|\varphi(\bt)| &\leq \prod_{x \geq \eps n^{1/4}} |\varphi_x(\bt)| \\
		&\leq \exp\left(- (c/4 - C\delta)k^2  \right) \\
		&\leq \exp(-c' k^2)\,,
		\end{align*}
		where we have chosen $\delta$ sufficiently small and used Lemma \ref{lem:truncate-var}.
	\end{proof}

	Since the measure of the set $\{\mathbf{b}: k \leq |b_j| \leq k+1 \}$ is equal to some polynomial $p(k)$, the measure of $T_k$ is $|T_k| = p(k) n^{-\sum_{j \in J}(j/2+1/4)}$.
	
	With these preliminaries in place, we are ready to tackle \eqref{eq:U-minus-V}:
	
	\begin{proof}[Proof of \eqref{eq:U-minus-V}]
		
		To begin with, bound \begin{equation*}
		\int_{U\setminus V} \left|\varphi(\bt) - \exp(i\langle \bt,\E \bX \rangle-\frac{1}{2}\bt^T S\bt) \right|\,d\bt \leq \sum_{k = \log n}^{\lceil \delta n^{1/4} \rceil} |T_k| \max_{\bt \in T_k}  \left|\varphi(\bt) - \exp(i\langle \bt,\E \bX\rangle-\frac{1}{2}\bt^T S\bt) \right|\,,
		\end{equation*}
		then compute \begin{align*}
		\sum_{\log n \leq k \leq \delta n^{1/4}} |T_k| \max_{\bt \in T_k} \left|\varphi(\bt) - \exp(-\frac{1}{2}\bt^T S\bt) \right| &\leq n^{-\sum_{j \in J}(j/2+1/4) } \sum_{k \geq \log n} p(k) \exp(-c k^2) \\
		&= O\left(e^{-c' (\log(n))^2 }\right) \,.
		\end{align*}
	\end{proof}
		
	\subsection{Establishing \eqref{eq:V}}\label{subsecV}  Our proof of \eqref{eq:V} can be viewed as an adaptation of the classical proof of the Lindeberg-Feller central limit theorem with an explicit error bound; see, for instance, \cite[Chapter 3.4]{durrett-book} for a similar proof and discussion of the classic theorem.

	We first bound contribution of the tail to the second moment of a geometric variable.  
	\begin{lemma}\label{lem:geo-tail}
		Let $X$ be a geometric random variable with parameter $p$. Then for each $C > 0$ we have $$ \E\left(|X - \E X|^2 \one_{\{ |X - \E X| \geq C\}} \right) \leq \frac{3 \Var(X)}{C p}\,.$$
	\end{lemma}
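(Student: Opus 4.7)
The plan is to combine Cauchy--Schwarz with Chebyshev's inequality and one explicit moment calculation for the geometric distribution. Let $\mu = \E X$ and $\sigma^2 = \Var(X)$.

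First, I would apply Cauchy--Schwarz to split the second moment on the tail event into a fourth moment times the tail probability:
\begin{equation*}
\E (X - \mu)^2 \one_{\{|X-\mu| \geq C\}} \le \sqrt{\E(X-\mu)^4 \cdot \P(|X-\mu|\geq C)}.
\end{equation*}
Next, Chebyshev's inequality immediately gives $\P(|X - \mu| \ge C) \le \sigma^2/C^2$, so the whole statement reduces to showing
\begin{equation*}
\E(X - \mu)^4 \le \frac{9\,\sigma^2}{p^2},
\end{equation*}
since then plugging into the Cauchy--Schwarz bound yields $\sqrt{(9\sigma^2/p^2)\cdot(\sigma^2/C^2)} = 3\sigma^2/(Cp) = 3\Var(X)/(Cp)$.

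It remains to verify this fourth-moment bound, which is the one piece of real content. I would compute the fourth central moment of the geometric directly (either from the moment generating function $\E s^X = p/(1-(1-p)s)$ or via cumulants, as in the proof of Lemma~\ref{lem:cumulant-bound}), giving
\begin{equation*}
\E(X - \mu)^4 = \frac{(1-p)(p^2 - 9p + 9)}{p^4}.
\end{equation*}
Dividing by $\sigma^2/p^2 = (1-p)/p^4$, the desired inequality $\E(X-\mu)^4 \le 9\sigma^2/p^2$ becomes the purely algebraic statement $p^2 - 9p + 9 \le 9$, i.e.\ $p(p-9) \le 0$, which holds on $[0,9]$ and so a fortiori on $(0,1]$.

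The main ``obstacle'' is simply the fourth-moment calculation, which is routine bookkeeping once one writes $X$ in terms of its generating function; all of the other steps are one-liners. The only tightness of the argument is in the final algebraic inequality, which is sharp as $p \to 0$ (both sides scale as $9/p^4$), and this is precisely why the constant $3$ in the statement is the natural one.
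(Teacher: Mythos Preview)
Your proof is correct and essentially identical to the paper's: both apply Cauchy--Schwarz, bound the tail probability by Chebyshev, and reduce to the explicit computation $\E(X-\mu)^4 = (1-p)(p^2-9p+9)/p^4 \le 9(1-p)/p^4 = 9\sigma^2/p^2$. The paper presents these steps in the same order with the same intermediate formula, so there is nothing to add.
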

	\begin{proof}
		By the Cauchy-Schwarz inequality we have \begin{align*}
		\E\left(|X - \E X|^2 \one_{\{ |X - \E X| \geq C\}} \right) \leq \sqrt{\E\left(|X - \E X|^4 \right) \P(|X - \E X| \geq C)}\,.
		\end{align*}
		Compute \begin{align*}
		\E|X - \E X|^4 &= \sum_{k \geq 0} \left(k - \frac{1-p}{p} \right)^4 p \cdot (1 - p)^k = \frac{(1 - p)(9 - 9p + p^2)}{p^4} \\
		&\leq \frac{9 (1 - p)}{p^4} = \frac{9 \Var(X)}{p^2}\,.
		\end{align*}
		By Chebyshev's inequality, bound $$ \P(|X - \E X| \geq C) \leq \frac{\Var(X)}{C^2}\,.$$
		Putting the three equations together completes the proof.
	\end{proof}
	
	We are now ready to show \eqref{eq:V}.
	\begin{proof}[Proof of \eqref{eq:V}]
		Fix $\bt \in \R^J$ with $\| \bt \| \leq 1$ and let $\eps > 0$ to be chosen later.  Consider the random variables $$Z_k := Z_k(\bt) := (Y_k - \E Y_k) \sum_{j \in J} k^j t_j n^{-j/2-1/4}\,.$$
		
		Then for $|\theta| \leq \log 
		n$ note that $$\varphi_{\widehat{\bX}}(\theta \bt) = \prod_{k \geq 1} \E e^{i\theta Z_k}$$
		and so
		\begin{align*} \left|\varphi_{\widehat{\bX}}(\theta \bt) - \exp\left(- \theta^2\bt^T {S} \bt/2 \right) \right| &\leq  \left|\exp\left(- \theta^2\bt^T {S} \bt/2 \right) - \prod_{k \geq 1} (1 - \theta^2 \E Z_k^2 /2) \right|\\
		&\qquad+ \sum_{k \geq 1} \left|\E[e^{i \theta Z_k}] - (1 - \theta^2 \E Z_k^2 /2)\right|\,.
		\end{align*}
		
		Note that \begin{align*}
		\prod_{k \geq 1} (1 - \theta^2 \E Z_k^2 /2) &= \exp\left(\sum_{k \geq 1} \log(1 - \theta^2 \E Z_k^2 / 2) \right) \\
		&= \exp\left(-\theta^2 \bt {S} \bt/2 + \sum_{k \geq 1} O(\theta^4 (\E Z_k^2)^2)\right) \\
		&= \exp\left(-\theta^2 \bt {S} \bt/2 + O(1/\sqrt{n})\right)
		\end{align*}
		where the third line is obtained by comparing a sum to an integral as in the proof of Lemma \ref{lem:Sigma-scale}.  It is thus sufficient to show \begin{equation}
		\max_{\|\bt\| \leq 1, |\theta| \leq \log n}\sum_{k \geq 1} \left|\E[e^{i \theta Z_k}] - (1 - \theta^2 \E Z_k^2 /2)\right| = O\left(\frac{(\log n)^3}{n^{1/8}} \right)\,.
		\end{equation}
		Working towards this, recall that for any mean-zero variable $X$ we have \begin{equation}\label{eq:333}\left|\E[e^{i t X}] - (1 - t^2 \E X^2 /2)\right| \leq \E\left[ \min\{|tX|^3,2|t X|^2  \}\right]\,.
		\end{equation}
		See, for instance, \cite[Equation (3.3.3)]{durrett-book}.  Applying this bound to $Z_k$ gives

		\begin{align*}
		\left|\E[e^{i \theta Z_k}] - (1 - \theta^2 \E Z_k^2 /2)\right| &\leq \E\left[ \min\{|\theta Z_k|^3,2|\theta Z_k|^2   \}\right] \\
		&\leq \E\left[ |\theta Z_k|^3 \one\{  |Z_k| \leq \eps\} \right] + \E\left[2|\theta Z_k|^2   \one\{ |Z_k| > \eps \} \right] \\
		&\leq |\theta|^3 \eps \E[|Z_k|^2 \one\{ |Z_k| \leq \eps \} ] + 2 \theta^2 \E[|Z_k|^2 \one\{ |Z_k| > \eps \} ] \\
		&\leq  |\theta|^3 \eps \Var(Z_k) + 2 \theta^2 \E[|Z_k|^2 \one\{ |Z_k| > \eps \} ]   \,.
		\end{align*}

		Using \eqref{eq:333} and Lemma \ref{lem:geo-tail}, bound \begin{align*}
		\sum_{k \geq 1} \left|\E[e^{i \theta Z_k}] - (1 - \theta^2 \E Z_k^2 /2)\right| &\leq \eps |\theta|^3 \bt^T \widetilde{S} \bt  + 2 \theta^2 \sum_{k \geq 1} \E[|Z_k|^2 \one\{|Z_k| > \eps \}]\\
		&\leq \eps |\theta|^3 \bt^T \widetilde{S} \bt + 6 \frac{\theta^2}{n^{1/4} \eps}\cdot \left( \frac{1}{\sqrt{n}} \sum_{k \geq 1} \left(\sum_{j \in J} k^j t_j n^{-j/2} \right)^3 \frac{\sigma_k^2}{p_k} \right)\\
		&= O\left(\eps |\theta|^3 + \frac{\theta^2}{n^{1/4} \eps} \right)\,,
		\end{align*}
		where in the last line we compared a sum to an integral to obtain this error term. 		
		Choosing $\eps = n^{-1/8}$ and bounding $|\theta| \leq \log n$ completes the proof.
	\end{proof}

	\begin{proof}[Proof of Theorem \ref{th:LCLT}]
		Chaining together Lemmas \ref{lem:combined} and \ref{lem:eval-Gaussian-integral} with \eqref{eq:need} completes the proof.
	\end{proof}
	
	Theorem~\ref{thmIntegralNonDistinct} follows immediately from Corollary~\ref{corPartitionNumberFormula}, Lemma~\ref{lemEntropyApprox}, and Lemma~\ref{lemCLTapprox}.

\section{Limit Shape} \label{secLimitShape}

In order to show convergence in distribution of $\{\phi_{\lambda,n}\}$ to $\{\phi_{\infty}\}$ we need to show that for each $0 < t_1 < t_2 < \infty$ and $\eps > 0$ we have \begin{equation}\label{eq:limit-need}
\lim_{n \to \infty} \P_{\lambda \sim \mathcal{P}(\bN(\balpha,n))}\left( \max_{t \in [t_1,t_2]} \left| \phi_{\lambda,n} - \phi_{\infty} \right| \geq \eps  \right) \to 0
\end{equation}
where the probability takes $\lambda$ uniformly from $\mathcal{P}(\bN(\balpha,n))$.  

The idea will be to show that if $\lambda$ is chosen according to the maximum entropy measure $\mu_n$, then the convergence in $\eqref{eq:limit-need}$ is exponentially small in $\sqrt{n}$; using Lemma \ref{lemCLTapprox} together with \eqref{eqpnMu} will complete the proof.

Adopting the notation of Theorem \ref{th:LCLT}, let $\{X_k\}_{k \geq 1}$ be independent geometric random variables where $X_k$ has mean $(\exp(\sum_{j \in J} \bbh k^j) - 1)^{-1}$ and recall that this is the number of parts of size $k$ of a partition $\lambda$ chosen according to the maximum entropy measure $\mu_n$.  

The core of the proof is to show that the heuristic \eqref{eq:LLN} holds on the scale of $\sqrt{n}$ holds: 

\begin{lemma}\label{lem:geo-concentrate}
	Let $K \subset (0,\infty)$ be a compact set. Then for each $t \in K$ and $\eps > 0$ there exists a constant $C_{K,\eps}$ so that $$\P\left(  \left|\sum_{k \geq t\sqrt{n}} X_k - \int_t^\infty \frac{1}{\exp\left(\sum_{j \in J} \bbeta_j x^j\right)-1}\,dx  \right| \geq \eps \sqrt{n} \right) \leq e^{-C_{K,\eps} \sqrt{n}}\,.$$
\end{lemma}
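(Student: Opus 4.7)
The plan is a Chernoff--Bernstein concentration argument for independent sub-exponential random variables, preceded by a Riemann-sum step connecting $\mathbb{E}\sum_{k \geq t\sqrt n} X_k$ to the integral.

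Set $S_n = \sum_{k \geq t\sqrt n} X_k$. First I would show
\[
\E S_n = \sqrt n \int_t^\infty \frac{1}{\exp(\sum_{j \in J} \bbeta_j x^j) - 1}\,dx + o(\sqrt n),
\]
uniformly for $t$ in the compact set $K \subset (0,\infty)$. This follows from the substitution $x = k/\sqrt n$, the convergence $\bbh_j n^{j/2} \to \bbeta_j$ from Lemma~\ref{lemDiscreteBetas}, and a Riemann-sum comparison analogous to the ones used in Section~\ref{secOPT} (the integrand decays super-exponentially at infinity, so the tail is controlled). After this, it suffices to prove the Chernoff-type bound
\[
\P\bigl(|S_n - \E S_n| \geq \tfrac{\eps}{2} \sqrt n\bigr) \leq e^{-c(K,\eps)\sqrt n}.
\]

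For the concentration, since $K$ is bounded away from $0$ and the polynomial $\sum_{j} \bbeta_j x^j$ is positive on $(0,\infty)$, for every $k \geq t\sqrt n$ with $t \in K$ and $n$ large the parameters $p_k = 1 - \exp(-\sum_j \bbh_j k^j)$ are uniformly bounded below by some $p_0 = p_0(K) > 0$. A Taylor expansion of $\log \E e^{\lambda(X_k - \E X_k)}$ about $\lambda = 0$ gives the uniform sub-exponential bound
\[
\log \E e^{\lambda(X_k - \E X_k)} \leq C_1 \lambda^2 \Var(X_k), \qquad |\lambda| \leq \lambda_0,
\]
for constants $C_1, \lambda_0 > 0$ depending only on $p_0$. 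The same Riemann-sum comparison shows $\Var(S_n) = \sum_{k \geq t\sqrt n} \Var(X_k) = O(\sqrt n)$ uniformly for $t \in K$. Summing the per-variable bound by independence gives $\log \E e^{\lambda(S_n - \E S_n)} \leq C_2 \lambda^2 \sqrt n$ for $|\lambda| \leq \lambda_0$, and the exponentiated Markov inequality yields
\[
\P\bigl(S_n - \E S_n \geq \tfrac{\eps}{2}\sqrt n\bigr) \leq \exp\bigl(-\tfrac{\lambda \eps}{2}\sqrt n + C_2 \lambda^2 \sqrt n\bigr).
\]
Taking $\lambda = \min\bigl(\lambda_0, \eps/(4C_2)\bigr)$ gives the desired decay rate $e^{-c(K,\eps)\sqrt n}$. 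The lower tail is handled symmetrically by replacing $\lambda$ with $-\lambda$.

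The main technical point --- which I would think of as the only ``obstacle,'' though it is a mild one --- is ensuring that the sub-exponential constants $(C_1,\lambda_0)$ and the Riemann-sum error estimates are uniform for $t \in K$ and for the discrete parameters $\bbh$ in a small neighborhood of $(\bbeta_j n^{-j/2})_{j\in J}$. Since $K$ is a compact subset of $(0,\infty)$ on which $\sum_j \bbeta_j x^j$ is bounded below by a positive constant, and since $\bbh_j n^{j/2} \to \bbeta_j$ by Lemma~\ref{lemDiscreteBetas}, the uniformity is automatic once $n$ is large enough, and the whole plan goes through.
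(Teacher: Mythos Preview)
Your proposal is correct and follows essentially the same route as the paper: both reduce to a deviation bound $\P(|S_n - \E S_n| \geq \tfrac{\eps}{2}\sqrt n)$ by a Riemann-sum comparison of $\E S_n$ with $\sqrt n$ times the integral, then apply a Chernoff argument using the uniform sub-exponential bound $\log \E e^{\lambda(X_k - \E X_k)} \leq C\lambda^2 \Var(X_k)$ for $|\lambda|$ small (the paper invokes Lemma~\ref{lem:cumulant-bound} for this, while you derive it directly from the lower bound $p_k \geq p_0(K)>0$). The only cosmetic difference is that you optimize over $\lambda$ explicitly, whereas the paper simply takes $\theta$ small with respect to $\eps$.
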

\begin{proof}
	Note that for $n$ sufficiently large (uniformly in $K$) we have \begin{align*} &\left|\sum_{k \geq t \sqrt{n}} \E X_k - \int_{t}^\infty \frac{1}{\exp\left(\sum_{j \in J} \bbeta_j x^j\right)-1}\,dx \right| \\
	&\qquad= \left|\sum_{k \geq t \sqrt{n}}\frac{1}{\exp\left(\sum_{j \in J} \bbh_j k^j\right)-1} - \int_t^\infty \frac{1}{\exp\left(\sum_{j \in J} \bbeta_j x^j\right)-1}\,dx \right| \\
	&\qquad \leq \eps \sqrt{n}/2
	\end{align*}
	
	and so it is sufficient to show \begin{equation}
	\P\left(\left|\sum_{k \geq t\sqrt{n}} (X_k - \E X_k) \right| \geq \eps\sqrt{n}/2 \right) \leq \exp(-C_{K,\eps}\sqrt{n})\,.
	\end{equation}
	This will follow from a standard Chernoff bound argument.  By Lemma \ref{lem:cumulant-bound}, for each $k \geq t \sqrt{n}$ there are constants $C,c > 0$ (depending on $K$) so that for $|\theta| \leq c$ we have $$\E \exp(\theta (X_k - \E X_k)) \leq \exp(C \theta^2 \Var(X_k))\,.$$ 
	
	This implies for $|\theta| \leq c$ we have $$\E \exp\left(\theta \sum_{k \geq t \sqrt{n}}(X_k - \E X_k) \right) \leq \exp\left(C \theta^2 \sum_{k \geq t\sqrt{n}} \Var(X_k)\right) \leq \exp(C' \theta^2 \sqrt{n})\,.$$

	Applying this bound along with Markov's inequality for $\theta \in (0,c]$ to be chosen small enough with respect to $\eps$ \begin{align*}
	\P\left(\sum_{k \geq t\sqrt{n}} (X_k - \E X_k)  \geq \eps\sqrt{n}/2 \right) &= \P\left(\exp\left(\theta \sum_{k \geq t\sqrt{n}}(X_k - \E X_k)\right) \geq \exp(\eps \theta \sqrt{n}/2)  \right) \\
	&\leq \exp(C' \theta^2 \sqrt{n} - \eps \theta \sqrt{n}/2) \\
	&\leq \exp(-C_\eps \sqrt{n})\,.
	\end{align*}
	The corresponding lower bound follows by an identical argument.
\end{proof}
	We now show that \eqref{eq:limit-need} holds for $\lambda$ chosen according to $\mu_n$:
	
\begin{cor}
	For $0 < t_1 < t_2 < \infty$ and $\eps > 0$, there is a constant $c = c(t_1,t_2,\eps) > 0$ so that 
	$$\P_{\lambda \sim \mu_n} \left( \max_{t \in [t_1, t_2]} |\phi_{\lambda,n}(t) - \phi_\infty(t)| \geq \eps  \right) \leq e^{-c \sqrt{n}}\,.$$
\end{cor}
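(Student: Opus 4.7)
The plan is to leverage the pointwise concentration from Lemma \ref{lem:geo-concentrate} together with a monotonicity argument to obtain the uniform-in-$t$ bound. The key observation is that under $\mu_n$, $\phi_{\lambda,n}(t) = n^{-1/2}\sum_{k \geq t\sqrt{n}} X_k$, so Lemma \ref{lem:geo-concentrate} (applied with tolerance $\eps/3$, say) states that for each fixed $t \in [t_1,t_2]$,
\[ \P_{\lambda \sim \mu_n}\bigl(|\phi_{\lambda,n}(t) - \phi_\infty(t)| \geq \eps/3\bigr) \leq e^{-c_0 \sqrt{n}} \]
for some $c_0 = c_0(t_1,t_2,\eps) > 0$ uniform in $t \in [t_1,t_2]$ (the constant in Lemma \ref{lem:geo-concentrate} depends only on the compact set $K = [t_1,t_2]$ and $\eps$).

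Next, I would pass from pointwise to uniform convergence via a grid argument that exploits monotonicity. Both $\phi_{\lambda,n}(\cdot)$ and $\phi_\infty(\cdot)$ are non-increasing, and $\phi_\infty$ is uniformly continuous on the compact interval $[t_1,t_2]$ (it is the integral of the bounded integrable function $f^*$ restricted to this compact set bounded away from any singularity at $0$). Choose a partition $t_1 = s_0 < s_1 < \cdots < s_N = t_2$ fine enough that $\phi_\infty(s_i) - \phi_\infty(s_{i+1}) \leq \eps/3$ for each $i$; here $N = N(t_1,t_2,\eps)$ depends only on the data. For any $t \in [s_i, s_{i+1}]$, monotonicity gives
\[ \phi_{\lambda,n}(t) - \phi_\infty(t) \leq \phi_{\lambda,n}(s_i) - \phi_\infty(s_{i+1}) \leq \bigl(\phi_{\lambda,n}(s_i) - \phi_\infty(s_i)\bigr) + \eps/3, \]
and symmetrically $\phi_\infty(t) - \phi_{\lambda,n}(t) \leq \bigl(\phi_\infty(s_{i+1}) - \phi_{\lambda,n}(s_{i+1})\bigr) + \eps/3$. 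Thus on the event that $|\phi_{\lambda,n}(s_i) - \phi_\infty(s_i)| \leq \eps/3$ for every $i = 0,\ldots,N$, we have $\max_{t \in [t_1,t_2]} |\phi_{\lambda,n}(t) - \phi_\infty(t)| \leq 2\eps/3 < \eps$.

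The final step is a union bound over the $N+1$ grid points: the complementary event has probability at most $(N+1)e^{-c_0 \sqrt{n}}$, which is at most $e^{-c\sqrt{n}}$ for any $c < c_0$ and $n$ sufficiently large (and after adjusting the constant we may absorb the prefactor for small $n$ as well). This establishes the claim. The only subtlety worth noting is ensuring uniform continuity of $\phi_\infty$ on $[t_1,t_2]$, which is immediate since $t_1 > 0$ keeps us away from any possible singularity of $f^*$ at the origin; the rest is a routine monotone-function sup-norm-to-grid reduction, so I do not anticipate a serious obstacle.
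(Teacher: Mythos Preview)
Your argument is correct. Both you and the paper reduce the uniform bound to a union bound over a grid combined with a pointwise application of Lemma~\ref{lem:geo-concentrate}, but the choice of grid differs. The paper takes the grid $\{t \in [t_1,t_2] : t\sqrt{n} \in \Z\}$, of size $O(\sqrt{n})$; since $\phi_{\lambda,n}$ is piecewise constant with jumps exactly at these points and $\phi_\infty$ varies by $O(n^{-1/2})$ between consecutive grid points (as $f^*$ is bounded on $[t_1,\infty)$), controlling the grid controls the supremum, and the $O(\sqrt{n})$ prefactor from the union bound is harmlessly absorbed into the exponential. Your version instead uses a fixed $\eps$-grid of size $N(t_1,t_2,\eps)$ independent of $n$, and compensates by the monotonicity/uniform-continuity sandwich (a Glivenko--Cantelli style interpolation). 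Your route is a bit more explicit about why the sup is controlled, while the paper's route avoids invoking uniform continuity of $\phi_\infty$; neither requires anything the other does not have readily available.
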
	
\begin{proof}
	For each $t \in [t_1,t_2]$ so that $t\sqrt{n} \in \Z$, apply Lemma \ref{lem:geo-concentrate} to see $$|\phi_{\lambda,n}(t) - \phi_\infty(t)| \geq \eps \leq e^{-c \sqrt{n}}\,.$$
	Since there are $O(\sqrt{n})$ many such $t$, we may union bound $$\P_{\lambda \sim \mu_n} \left( \max_{t \in [t_1, t_2]} |\phi_{\lambda,n}(t) - \phi_\infty(t)| \geq \eps  \right) \leq C \sqrt{n} e^{-c \sqrt{n}} \leq e^{-c'\sqrt{n}}\,.$$

\end{proof}

\begin{proof}[Proof of Theorem \ref{thm:shape}]
	Recall that in order to show Theorem \ref{thm:shape}, it is sufficient to show \eqref{eq:limit-need}; let $A$ denote the event in \eqref{eq:limit-need} and note $$\P_{\lambda\sim\mathcal{P}(\bN)}(A) = \P_{\lambda \sim \mu_n}(A \,|\, \lambda \in \mathcal{P}(\bN ))\leq \mu_n(\mathcal{P}(\bN))^{-1} \P_{\lambda \sim \mu_n}(A) = O(n^{C} e^{-c \sqrt{n}}) = o(1)\,. $$
\end{proof}

\section*{Acknowledgements}  
We thank Dan Romik and Robin Pemantle for helpful comments on a draft of this paper.    WP supported in part by NSF grants DMS-1847451 and CCF-1934915.

	\bibliography{partitions}

\begin{thebibliography}{10}

\bibitem{anari2018log}
N.~Anari, S.~O. Gharan, and C.~Vinzant.
\newblock Log-concave polynomials, entropy, and a deterministic approximation
  algorithm for counting bases of matroids.
\newblock In {\em 2018 IEEE 59th Annual Symposium on Foundations of Computer
  Science (FOCS)}, pages 35--46. IEEE, 2018.

\bibitem{andrews1998theory}
G.~E. Andrews.
\newblock {\em The Theory of Partitions}.
\newblock Cambridge university press, 1998.

\bibitem{asadpour2017log}
A.~Asadpour, M.~X. Goemans, A.~Madry, S.~O. Gharan, and A.~Saberi.
\newblock An $o (\log n/\log \log n)$-approximation algorithm for the
  asymmetric traveling salesman problem.
\newblock {\em Operations Research}, 65(4):1043--1061, 2017.

\bibitem{barvinok2017counting}
A.~Barvinok.
\newblock Counting integer points in higher-dimensional polytopes.
\newblock In {\em Convexity and concentration}, pages 585--612. Springer, 2017.

\bibitem{barvinok2010maximum}
A.~Barvinok and J.~Hartigan.
\newblock Maximum entropy {G}aussian approximations for the number of integer
  points and volumes of polytopes.
\newblock {\em Advances in Applied Mathematics}, 45(2):252--289, 2010.

\bibitem{barvinok2013number}
A.~Barvinok and J.~A. Hartigan.
\newblock The number of graphs and a random graph with a given degree sequence.
\newblock {\em Random Structures \& Algorithms}, 42(3):301--348, 2013.

\bibitem{boyd2004convex}
S.~Boyd and L.~Vandenberghe.
\newblock {\em Convex optimization}.
\newblock Cambridge University Press, 2004.

\bibitem{canfield1997recursions}
E.~R. Canfield.
\newblock From recursions to asymptotics: on {S}zekeres' formula for the number
  of partitions.
\newblock {\em Electron. J. Combin}, 4(2):19, 1997.

\bibitem{canfield2001random}
R.~Canfield, S.~Corteel, and P.~Hitczenko.
\newblock Random partitions with non-negative rth differences.
\newblock {\em Advances in Applied Mathematics}, 27(2-3):298--317, 2001.

\bibitem{cencov2000statistical}
N.~N. Cencov.
\newblock {\em Statistical decision rules and optimal inference}.
\newblock Number~53. American Mathematical Soc., 2000.

\bibitem{cover1999elements}
T.~M. Cover.
\newblock {\em Elements of information theory}.
\newblock John Wiley \& Sons, 1999.

\bibitem{csiszar1975divergence}
I.~Csisz{\'a}r.
\newblock I-divergence geometry of probability distributions and minimization
  problems.
\newblock {\em The Annals of Probability}, pages 146--158, 1975.

\bibitem{dembo1998large}
A.~Dembo, O.~Zeitouni, and A.~Vershik.
\newblock Large deviations for integer partitions.
\newblock Technical report, SCAN-9901069, 1998.

\bibitem{durrett-book}
R.~Durrett.
\newblock {\em Probability: theory and examples}, volume~31 of {\em Cambridge
  Series in Statistical and Probabilistic Mathematics}.
\newblock Cambridge University Press, Cambridge, fourth edition, 2010.

\bibitem{erdos1942elementary}
P.~Erdos.
\newblock On an elementary proof of some asymptotic formulas in the theory of
  partitions.
\newblock {\em Annals of Mathematics}, pages 437--450, 1942.

\bibitem{feller}
W.~Feller.
\newblock {\em An introduction to probability theory and its applications.
  {V}ol. {II}}.
\newblock Second edition. John Wiley \& Sons, Inc., New York-London-Sydney,
  1971.

\bibitem{fristedt1993structure}
B.~Fristedt.
\newblock The structure of random partitions of large integers.
\newblock {\em Transactions of the American Mathematical Society},
  337(2):703--735, 1993.

\bibitem{green2012quantitative}
B.~Green and T.~Tao.
\newblock The quantitative behaviour of polynomial orbits on nilmanifolds.
\newblock {\em Annals of Mathematics}, pages 465--540, 2012.

\bibitem{hardy1918asymptotic}
G.~H. Hardy and S.~Ramanujan.
\newblock Asymptotic formula{\ae} in combinatory analysis.
\newblock {\em Proceedings of the London Mathematical Society}, 2(1):75--115,
  1918.

\bibitem{hausdorff1921summationsmethoden}
F.~Hausdorff.
\newblock Summationsmethoden und momentfolgen. {I}.
\newblock {\em Mathematische Zeitschrift}, 9(1-2):74--109, 1921.

\bibitem{jaynes1957information}
E.~T. Jaynes.
\newblock Information theory and statistical mechanics.
\newblock {\em Physical review}, 106(4):620, 1957.

\bibitem{jiang2019generalized}
T.~Jiang and K.~Wang.
\newblock A generalized {H}ardy-{R}amanujan formula for the number of
  restricted integer partitions.
\newblock {\em Journal of Number Theory}, 201:322--353, 2019.

\bibitem{junk2000maximum}
M.~Junk.
\newblock Maximum entropy for reduced moment problems.
\newblock {\em Mathematical Models and Methods in Applied Sciences},
  10(07):1001--1025, 2000.

\bibitem{lasserre2010moments}
J.-B. Lasserre.
\newblock {\em Moments, positive polynomials and their applications}, volume~1.
\newblock World Scientific, 2010.

\bibitem{mead1984maximum}
L.~R. Mead and N.~Papanicolaou.
\newblock Maximum entropy in the problem of moments.
\newblock {\em Journal of Mathematical Physics}, 25(8):2404--2417, 1984.

\bibitem{meinardus1954partitionen}
G.~Meinardus.
\newblock {\"U}ber partitionen mit differenzenbedingungen.
\newblock {\em Mathematische Zeitschrift}, 61(1):289--302, 1954.

\bibitem{melczer2018counting}
S.~Melczer, G.~Panova, and R.~Pemantle.
\newblock Counting partitions inside a rectangle.
\newblock {\em SIAM Journal on Discrete Mathematics}, 34(4):2388--2410, 2020.

\bibitem{newman1962simplified}
D.~Newman.
\newblock A simplified proof of the partition formula.
\newblock {\em The Michigan Mathematical Journal}, 9(3):283--287, 1962.

\bibitem{okounkov}
A.~Okounkov.
\newblock Limit shapes, real and imagined.
\newblock {\em Bull. Amer. Math. Soc. (N.S.)}, 53(2):187--216, 2016.

\bibitem{petrov}
F.~Petrov.
\newblock Two elementary approaches to the limit shapes of {Y}oung diagrams.
\newblock {\em Zap. Nauchn. Sem. S.-Peterburg. Otdel. Mat. Inst. Steklov.
  (POMI)}, 370(Kraevye Zadachi Matematichesko\u{\i} Fiziki i Smezhnye Voprosy
  Teorii Funktsi\u{\i}. 40):111--131, 221, 2009.

\bibitem{pittel1997likely}
B.~Pittel.
\newblock On a likely shape of the random {F}errers diagram.
\newblock {\em Advances in Applied Mathematics}, 18(4):432--488, 1997.

\bibitem{polya1915ganzwertige}
G.~P{\'o}lya.
\newblock {\"U}ber ganzwertige ganze funktionen.
\newblock {\em Rendiconti del Circolo Matematico di Palermo (1884-1940)},
  40(1):1--16, 1915.

\bibitem{rockafellar1974conjugate}
R.~T. Rockafellar.
\newblock {\em Conjugate duality and optimization}.
\newblock SIAM, 1974.

\bibitem{romik2005partitions}
D.~Romik.
\newblock Partitions of $n$ into $t\sqrt{n}$ parts.
\newblock {\em European Journal of Combinatorics}, 26(1):1--17, 2005.

\bibitem{sanov1958probability}
I.~N. Sanov.
\newblock {\em On the probability of large deviations of random variables}.
\newblock United States Air Force, Office of Scientific Research, 1958.

\bibitem{shlosman}
S.~B. Shlosman.
\newblock The {W}ulff construction in statistical mechanics and combinatorics.
\newblock {\em Uspekhi Mat. Nauk}, 56(4(340)):97--128, 2001.

\bibitem{singh2014entropy}
M.~Singh and N.~K. Vishnoi.
\newblock Entropy, optimization and counting.
\newblock In {\em Proceedings of the forty-sixth annual ACM symposium on Theory
  of computing}, pages 50--59, 2014.

\bibitem{stieltjes1894recherches}
T.-J. Stieltjes.
\newblock Recherches sur les fractions continues.
\newblock In {\em Annales de la Facult{\'e} des sciences de Toulouse:
  Math{\'e}matiques}, volume~8, pages J1--J122, 1894.

\bibitem{st-1}
M.~Szalay and P.~Tur\'{a}n.
\newblock On some problems of the statistical theory of partitions with
  application to characters of the symmetric group. {I}.
\newblock {\em Acta Math. Acad. Sci. Hungar.}, 29(3-4):361--379, 1977.

\bibitem{st-2}
M.~Szalay and P.~Tur\'{a}n.
\newblock On some problems of the statistical theory of partitions with
  application to characters of the symmetric group. {II}.
\newblock {\em Acta Math. Acad. Sci. Hungar.}, 29(3-4):381--392, 1977.

\bibitem{szekeres1953some}
G.~Szekeres.
\newblock Some asymptotic formulae in the theory of partitions ({II}).
\newblock {\em The Quarterly Journal of Mathematics}, 4(1):96--111, 1953.

\bibitem{tagliani2003maximum}
A.~Tagliani.
\newblock Maximum entropy solutions and moment problem in unbounded domains.
\newblock {\em Applied mathematics letters}, 16(4):519--524, 2003.

\bibitem{takacs1986some}
L.~Tak{\'a}cs.
\newblock Some asymptotic formulas for lattice paths.
\newblock {\em Journal of Statistical Planning and Inference}, 14(1):123--142,
  1986.

\bibitem{vershik1996statistical}
A.~M. Vershik.
\newblock Statistical mechanics of combinatorial partitions, and their limit
  shapes.
\newblock {\em Funktsional'nyi Analiz i ego Prilozheniya}, 30(2):19--39, 1996.

\bibitem{vershik1997limit}
A.~M. Vershik.
\newblock Limit distribution of the energy of a quantum ideal gas from the
  viewpoint of the theory of partitions of natural numbers.
\newblock {\em Russian Mathematical Surveys}, 52(2):379, 1997.

\bibitem{vershik2001limit}
A.~M. Vershik and Y.~V. Yakubovich.
\newblock The limit shape and fluctuations of random partitions of naturals
  with fixed number of summands.
\newblock {\em Moscow Mathematical Journal}, 1(3):457--468, 2001.

\bibitem{wright1931asymptotic}
E.~Wright.
\newblock Asymptotic partition formulae i. plane partitions.
\newblock {\em The Quarterly Journal of Mathematics}, (1):177--189, 1931.

\bibitem{wright1934asymptotic}
E.~M. Wright.
\newblock Asymptotic partition formulae. {III}. {P}artitions into kth powers.
\newblock {\em Acta Mathematica}, 63:143--191, 1934.

\end{thebibliography}
	\bibliographystyle{abbrv}

	\appendix

	\section{Proof of Lemma~\ref{lem:EM-asymptotic}}
	\label{secEulerMac}

	We make use of the  Euler-Maclaurin summation formula which allows us to compare a sum to a corresponding integral with an explicit error term.
	\begin{lemma}(Euler-Maclaurin Formula). \label{lem:EM}  Let $f: [a,b] \to \mathbb{C}$ be a smooth function.  Then \begin{align*}
		\sum_{k = a}^b f(k) = \int_a^b f(x)\,dx + \frac{f(a) + f(b)}{2} + \frac{f'(b) - f'(a)}{12} - \int_a^b f''(x)\frac{P_2(x)}{2}\,dx
		\end{align*}
		where $P_2$ is the second periodized Bernoulli polynomial $$P_2(x) = (x - \lfloor x \rfloor)^2 - (x - \lfloor x \rfloor) + \frac{1}{6}\,.$$
	\end{lemma}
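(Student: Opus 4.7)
My plan is to prove this by the standard two-step integration-by-parts argument using the periodized Bernoulli polynomials, first working on a single unit interval $[n,n+1]$ and then summing. Throughout, write $P_1(x) = \{x\} - \tfrac{1}{2}$, the first periodized Bernoulli polynomial; observe that on each open interval $(n,n+1)$ one has $P_1'(x) = 1$ and $\bigl(P_2(x)/2\bigr)' = P_1(x)$, and that $P_2$ takes the value $1/6$ at every integer by periodicity.

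First I would assume without loss of generality that $a,b\in\mathbb{Z}$ (which is the case we actually use) and focus on a single interval $[n,n+1]$. Integrating by parts with $u=f(x)$ and the antiderivative $v(x) = x-(n+\tfrac{1}{2}) = P_1(x)$ on $(n,n+1)$ gives
\begin{equation*}
\int_n^{n+1} f(x)\,dx \;=\; \frac{f(n)+f(n+1)}{2} \;-\; \int_n^{n+1} f'(x)\,P_1(x)\,dx,
\end{equation*}
since the boundary contributions evaluate to $f(n+1)\cdot\tfrac{1}{2} - f(n)\cdot(-\tfrac{1}{2})$. Next I would integrate by parts a second time on the remaining integral, now with $u=f'(x)$ and $v(x) = P_2(x)/2$, which is an antiderivative of $P_1$ on $(n,n+1)$. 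Because $P_2$ equals $1/6$ at both endpoints, the boundary term telescopes cleanly:
\begin{equation*}
\int_n^{n+1} f'(x)\,P_1(x)\,dx \;=\; \frac{f'(n+1)-f'(n)}{12} \;-\; \int_n^{n+1} f''(x)\,\frac{P_2(x)}{2}\,dx.
\end{equation*}

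Finally I would sum both identities over $n=a,a+1,\ldots,b-1$. The right-hand side of the combined identity produces $\int_a^b f(x)\,dx$, the telescoping sum $\bigl(f'(b)-f'(a)\bigr)/12$, and the integral $\int_a^b f''(x) P_2(x)/2\, dx$, while the trapezoidal sum rewrites as
\begin{equation*}
\sum_{n=a}^{b-1}\frac{f(n)+f(n+1)}{2} \;=\; \sum_{k=a}^{b} f(k) \;-\; \frac{f(a)+f(b)}{2},
\end{equation*}
since each interior integer is counted twice with weight $1/2$ and each endpoint once. Rearranging yields exactly the stated identity.

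There is no real obstacle here: the only points requiring mild care are verifying $(P_2/2)' = P_1$ on each open unit interval and checking $P_2(n)=1/6$ for all integers $n$ so that the boundary contributions of the second integration by parts telescope across unit intervals, rather than producing spurious jump terms at every integer. Once those two identities are in hand, the proof is just bookkeeping.
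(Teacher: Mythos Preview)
Your proof is correct and is the standard derivation of the order-two Euler--Maclaurin formula via two successive integrations by parts against the periodized Bernoulli polynomials. The paper, however, does not prove this lemma at all: it is stated in the appendix as a classical fact and simply invoked. So there is nothing to compare against, and your argument stands on its own.
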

	
	The following easy consequence of Stirling's formula is closely related to the Wallis product.
	\begin{fact}
		$$\int_1^\infty \frac{P_2(x)}{2x^2}\,dx = \frac{1}{2}\log(2\pi) - \frac{11}{12}\,.$$
	\end{fact}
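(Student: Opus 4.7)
The plan is to apply the Euler--Maclaurin formula (the preceding Lemma~\ref{lem:EM}) to the function $f(x) = \log x$ on the interval $[1,N]$, and then pass to the limit $N\to\infty$ using Stirling's formula; the claimed identity will fall out from comparing the constant terms on the two sides.

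Concretely, with $f(x)=\log x$ we have $f'(x)=1/x$ and $f''(x)=-1/x^2$, so Lemma~\ref{lem:EM} gives
\begin{equation*}
\log N! \;=\; \sum_{k=1}^{N}\log k \;=\; \int_1^N \log x\,dx + \frac{\log N}{2} + \frac{1/N - 1}{12} + \int_1^N \frac{P_2(x)}{2x^2}\,dx.
\end{equation*}
Using $\int_1^N \log x\,dx = N\log N - N + 1$, this rearranges to
\begin{equation*}
\log N! - \left(N+\tfrac{1}{2}\right)\log N + N \;=\; 1 - \frac{1}{12} + \frac{1}{12N} + \int_1^N \frac{P_2(x)}{2x^2}\,dx.
\end{equation*}

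Stirling's formula asserts that the left-hand side tends to $\tfrac{1}{2}\log(2\pi)$ as $N\to\infty$. On the right-hand side, the integral converges absolutely because $P_2$ is bounded (indeed $|P_2|\le 1/6$ on $[0,1)$ by inspection) so that $|P_2(x)/(2x^2)|$ is integrable on $[1,\infty)$. Passing to the limit, the $1/(12N)$ term vanishes and we obtain
\begin{equation*}
\frac{1}{2}\log(2\pi) \;=\; 1 - \frac{1}{12} + \int_1^\infty \frac{P_2(x)}{2x^2}\,dx \;=\; \frac{11}{12} + \int_1^\infty \frac{P_2(x)}{2x^2}\,dx,
\end{equation*}
which rearranges to the stated identity.

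There is no real obstacle here; the only thing to be careful about is the sign in the $f''$-term of Euler--Maclaurin (a factor $-1$ from $f''(x)=-1/x^2$ cancels the minus sign in front of the remainder integral, leaving $+\int P_2(x)/(2x^2)$), and the verification that the improper integral converges so that the limit $N\to\infty$ is legitimate. The appeal to Stirling's formula is what supplies the constant $\tfrac{1}{2}\log(2\pi)$ on the left, which is precisely the ingredient needed to pin down the value of the integral.
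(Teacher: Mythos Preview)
Your proof is correct and follows exactly the approach in the paper: apply the Euler--Maclaurin formula to $f(x)=\log x$ on $[1,N]$ and compare with Stirling's formula as $N\to\infty$. The paper's proof is stated more tersely but is the same argument.
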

	\begin{proof}
		Stirling's formula states $$\sum_{k = 1}^N \log k = N (\log N - 1) + \frac{1}{2}\log N + \frac{1}{2} \log(2\pi) + o(1)\,.$$
		Applying Lemma \ref{lem:EM} to the left-hand side and taking $N \to \infty$ completes the proof.
	\end{proof}

	To prove Lemma~\ref{lem:EM-asymptotic}, first note 
	\begin{equation} \label{eq:entropy-pieces}
	G\left (\frac{1}{e^a -1}\right) =  \frac{a}{e^a - 1}  -\log(1 - e^{-a})   \,.
	\end{equation}
	We will set $a = \sum_{j \in J} \bsg_j (tk)^j$ and will find asymptotics for each of these two terms.   To simplify notation in this section we will set 
	\begin{equation}
	p(x) = p_{\bsg}(x) = \sum_{j \in J} \bsg_j x^j \,.
	\end{equation} 
	
	\begin{lemma}\label{lem:EM-piece-2} As $t \to 0^+$ we have 
		\begin{align*}
\sum_{k \geq 1} \frac{p(tk)}{e^{p(tk)}  -1} = t^{-1}\int_0^\infty \frac{p(x)}{e^{p(x)}-1 }\,dx  -\frac{1}{2}\left(\one_{\jmin \geq 1} + \frac{\bsg_0}{e^{\bsg_0} - 1}\one_{\jmin = 0} \right) + o(1)\,.
		\end{align*}

	\end{lemma}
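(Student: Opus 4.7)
The plan is to apply the Euler--Maclaurin formula (Lemma~\ref{lem:EM}) to the function
\[
f(x) = f_t(x) := \frac{p(tx)}{e^{p(tx)} - 1}
\]
on $[1,N]$ and let $N \to \infty$. Under the hypothesis $p_{\bsg}(x) \geq 0$ with $\bsg_{\jmin} > 0$, the polynomial $p(tx)$ grows like $\bsg_J (tx)^J$ where $J = \max(\text{supp}\,\bsg)$, so $f(x)$ and all its derivatives decay super-polynomially as $x \to \infty$. Hence the boundary terms at $N$ vanish in the limit and Euler--Maclaurin yields
\[
\sum_{k \geq 1} f(k) = \int_1^\infty f(x)\,dx + \frac{f(1)}{2} - \frac{f'(1)}{12} - \int_1^\infty f''(x)\,\frac{P_2(x)}{2}\,dx.
\]

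Next I would make the substitution $u = tx$ in $\int_0^\infty f(x)\,dx$ to obtain the identity
\[
\int_0^\infty f(x)\,dx = t^{-1}\int_0^\infty \frac{p(u)}{e^{p(u)}-1}\,du,
\]
so that
\[
\sum_{k \geq 1} f(k) - t^{-1}\int_0^\infty \frac{p(u)}{e^{p(u)}-1}\,du
= -\int_0^1 f(x)\,dx + \frac{f(1)}{2} - \frac{f'(1)}{12} - \int_1^\infty f''(x)\,\frac{P_2(x)}{2}\,dx.
\]
The main task is then to evaluate the right-hand side as $t \to 0^+$. Using $h(y) := y/(e^y-1)$, which is smooth on $[0,\infty)$ with $h(0) = 1$, I would split into cases: if $\jmin \geq 1$, then $p(tx) \to 0$ uniformly on $[0,1]$ and $f(x) \to 1$, giving $\int_0^1 f \to 1$ and $f(1)/2 \to 1/2$, so the leading contribution is $-1 + 1/2 = -1/2$; if $\jmin = 0$, then $p(tx) \to \bsg_0$ and $f(x) \to h(\bsg_0) = \bsg_0/(e^{\bsg_0}-1)$, giving contribution $-\frac{1}{2}\,\bsg_0/(e^{\bsg_0}-1)$. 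These match the stated formula exactly.

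The step I expect to require the most care is controlling the two derivative terms. For $f'(1)$, writing $g(u) := h(p(u))$ gives $f'(x) = t\,g'(tx) = O(t)$ uniformly (since $g'$ is bounded on compact neighborhoods of $0$ in either case), so $f'(1)/12 = o(1)$. For the Bernoulli remainder, I would use the substitution $u = tx$:
\[
\int_1^\infty f''(x)\,\frac{P_2(x)}{2}\,dx = t\int_t^\infty g''(u)\,\frac{P_2(u/t)}{2}\,du = O\!\left(t\int_0^\infty |g''(u)|\,du\right) = O(t),
\]
where finiteness of $\int_0^\infty |g''|$ uses the super-polynomial decay of $g$ at infinity together with the smoothness of $g$ at $0$ (here the positivity hypothesis $\bsg_{\jmin} > 0$ and $p \geq 0$ ensures $p(u)$ stays bounded away from singularities of $h$). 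Finally, uniformity of the error over compact subsets $K$ of admissible $\bsg$ follows because all the Taylor expansions and the integrals $\int_0^\infty |g''|$ are uniformly controlled on $K$. Combining the four pieces gives the claimed asymptotic.
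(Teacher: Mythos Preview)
Your proposal is correct and follows essentially the same route as the paper: apply Euler--Maclaurin on $[1,\infty)$, convert $\int_1^\infty f$ to $t^{-1}\int_0^\infty g - \int_0^1 f$ via the substitution $u=tx$, evaluate the boundary contribution $-\int_0^1 f + \tfrac12 f(1)$ by the case split on $\jmin$, and dispose of the derivative terms by writing $f''(x)=t^2 g''(tx)$ and using $\int_0^\infty |g''|<\infty$. The only cosmetic difference is that the paper expresses $f''$ through the chain rule in terms of $h'$, $h''$, $p'$, $p''$ (with $h(s)=s/(e^s-1)$) before substituting, whereas you work directly with the composite $g=h\circ p$; the estimates are the same.
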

	\begin{proof}
		Set \begin{align*} f_{j}(t,x) :=
		\frac{d^j}{dx^j} \frac{p(tx)}{e^{p(tx)} - 1}
		\end{align*}
		for $j \in \{1,2\}$ and $$E:= -\frac{f_1(t,1)}{12} - \frac{1}{2}\int_1^\infty f_2(t,x)P_2(x)\,dx\,.$$
		
		By the Euler-Maclaurin formula (Lemma \ref{lem:EM}) we have \begin{align*}
		\sum_{k \geq 1} \frac{p(tk)}{e^{p(tk)} - 1 } &= \int_1^\infty \frac{p(tx)}{e^{p(tx)} - 1 }\,dx + \frac{1}{2}\frac{p(t)}{e^{p(t)} - 1}+ E \\
		&= t^{-1}\int_0^\infty \frac{p(x)}{e^{p(x)} - 1}\,dx - \frac{1}{2}\left(\one_{\jmin \geq 1} + \frac{\bsg_0}{e^{\bsg_0} - 1}\one_{\jmin = 0} \right)  +o(1)+E\,.
		\end{align*}
		To show $E = o(1)$ note first that $f_1(t,1) = O(t)$.  Define $g(s) := \frac{s}{e^s - 1}$ and compute directly that $g'$ and $g''$ decay exponentially as $s\to\infty$ and are both uniformly bounded.  We then see that $$f_2(t,x) = t^2\left( g''(p(tx)) (p'(tx))^2 + g'(p(tx))p''(tx)  \right) \,.$$
		Changing variables by setting $s = tx$ we see \begin{align*}\left|\int_1^\infty f_{2}(t,x)P_2(x) \,dx\right| &\leq t \int_0^\infty | g''(p(s))(p'(s))^2 + g'(p(s))p''(s)| \,ds\\
		&= O(t) \,,
		\end{align*}
		since $g''$ and $g'$ are uniformly bounded and decay exponentially.
	\end{proof}
	
	Before dealing with the other term in \eqref{eq:entropy-pieces}, we need two lemmas.
	
	\begin{lemma}\label{lem:EM-limit1} 
		We have
		$$ \lim_{t \to 0} \int_{1}^\infty \frac{ t^2 p''(t x)    }{e^{p(t x)} - 1 } P_2(x)\,dx = \jmin(\jmin-1) \int_1^\infty \frac{P_2(x)}{x^2}\,dx \,.$$
	\end{lemma}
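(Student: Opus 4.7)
The plan is to use dominated convergence after extracting a nice bounded function. Define
\[
\phi(u) := \frac{u^2 p''(u)}{e^{p(u)} - 1},
\]
so that the integrand can be rewritten as
\[
\frac{t^2 p''(tx)}{e^{p(tx)} - 1} P_2(x) = \frac{\phi(tx)}{x^2}\, P_2(x).
\]
Combined with the fact that $|P_2(x)| \leq 1/6$ on $\R$ (its extrema on $[0,1]$ are $1/6$ and $-1/12$), a uniform bound on $\phi$ produces an $x^{-2}$ dominating function on $[1,\infty)$, which is integrable.

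So the heart of the matter is to show $\phi$ is bounded on $(0,\infty)$ and to identify $\phi(0^+)$. First I would examine the small-$u$ asymptotics using $p(u) = \bsg_{\jmin} u^{\jmin} + o(u^{\jmin})$ and $p''(u) = \jmin(\jmin - 1)\bsg_{\jmin} u^{\jmin - 2} + o(u^{\jmin - 2})$ (where these terms are interpreted as $0$ when $\jmin \leq 1$), together with $e^{p(u)} - 1 \sim p(u)$ when $p(u) \to 0$ (the case $\jmin \geq 1$) and $e^{p(u)} - 1 \to e^{\bsg_0} - 1 > 0$ (the case $\jmin = 0$). A quick case analysis gives
\[
\lim_{u \to 0^+} \phi(u) = \jmin(\jmin - 1),
\]
valid in all three cases $\jmin = 0$, $\jmin = 1$, and $\jmin \geq 2$ (the first two yielding limit zero, matching $\jmin(\jmin-1) = 0$). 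For the large-$u$ behavior, $p(u) \to \infty$ as $u \to \infty$ (the highest-degree coefficient must be positive since $p \geq 0$ on $[0,\infty)$ and $p$ is non-constant), so $e^{p(u)} - 1$ grows at least exponentially while $u^2 p''(u)$ grows only polynomially; thus $\phi(u) \to 0$ at $\infty$. Continuity of $\phi$ on $(0,\infty)$ then forces $M := \sup_{u > 0} |\phi(u)| < \infty$.

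With this in hand, $\left|\tfrac{\phi(tx)}{x^2} P_2(x)\right| \leq \tfrac{M}{6 x^2}$, which is integrable on $[1,\infty)$. Dominated convergence then gives
\[
\lim_{t \to 0^+} \int_1^\infty \frac{t^2 p''(tx)}{e^{p(tx)} - 1} P_2(x)\, dx = \int_1^\infty \phi(0^+)\, \frac{P_2(x)}{x^2}\, dx = \jmin(\jmin - 1) \int_1^\infty \frac{P_2(x)}{x^2}\, dx,
\]
as desired.

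The only subtle step is checking that $\phi(0^+) = \jmin(\jmin-1)$ uniformly across the three cases; once the factorization $\phi(tx)/x^2$ is in place the rest is routine. No obstacles beyond bookkeeping are anticipated.
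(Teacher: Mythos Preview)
Your proof is correct and follows essentially the same route as the paper: both introduce the function $\phi(u)=\dfrac{u^{2}p''(u)}{e^{p(u)}-1}$ (the paper writes it in the variable $b=tx$), show it is bounded on $(0,\infty)$ by examining the behavior at $0$ and at $\infty$, and then apply dominated convergence with the majorant $M/x^{2}$. Your write-up is in fact a bit more explicit than the paper's, spelling out the case analysis for $\phi(0^{+})=\jmin(\jmin-1)$ and the bound $|P_{2}|\le 1/6$.
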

	\begin{proof}
		We first claim that there is a constant $M$ so that for all $x > 0$ we have 
		$$\frac{ t^2 p''(t x)    }{e^{p(t x)} - 1 }\leq \frac{M}{x^2}\,.$$
		Multiplying both sides by $x^2$ and setting $b = t x$ we need to show that the function
		 $$ \frac{b^2  p''(b) }{e^{p(b)} - 1}$$
		  is uniformly bounded for all $b > 0$.  For $b$ near zero, the denominator is $\Omega( b^{\jmin})$ and the numerator is $O(b^2  \cdot b^{\jmin-2}) = O(b^{\jmin})$.  This shows that the function is bounded for $b$ in a neighborhood of $0$.  Conversely, the function tends to zero as $b$ tends to infinity, thus showing the desired inequality.  
		
		To prove the lemma, we apply the dominated convergence theorem along with the fact that for each fixed $x$ we have the desired limit.
	\end{proof}
	
	By a similar argument, we see the following.
	\begin{lemma}\label{lem:EM-limit2}
		We have
		 $$ \lim_{t \to \infty} \int_{1}^\infty \frac{e^{p(t x)} t^2 (p'(t x))^2    }{\left(e^{p(t x)} - 1 \right)^2} P_2(x)\,dx = \jmin^2 \int_1^\infty \frac{P_2(x)}{x^2}\,dx \,.$$
	\end{lemma}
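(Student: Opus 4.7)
The approach is to follow the recipe of Lemma~\ref{lem:EM-limit1} verbatim: identify the pointwise limit of the integrand, produce a $t$-independent integrable dominating function on $[1,\infty)$, and invoke the dominated convergence theorem. (The limit in the statement should be read as $t \to 0^+$, matching the setting of Lemma~\ref{lem:EM-limit1} and the application in the proof of Lemma~\ref{lem:EM-asymptotic}.)

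The key estimate is a uniform bound. After the substitution $b = tx$, the integrand equals $h(b)\,P_2(x)/x^2$, where
$$h(b) := \frac{e^{p(b)} b^2 (p'(b))^2}{(e^{p(b)} - 1)^2}.$$
I would show $h$ is uniformly bounded on $(0,\infty)$. Near $b = 0$, the hypothesis $\bsg_{\jmin} > 0$ gives $p(b) \sim \bsg_{\jmin} b^{\jmin}$ and $p'(b) \sim \jmin \bsg_{\jmin} b^{\jmin-1}$; balancing the orders of vanishing in numerator and denominator yields $h(b) \to \jmin^2$ when $\jmin \geq 1$, and $h(b) \to 0$ when $\jmin = 0$ (since then the denominator is bounded away from $0$ while $b^2 (p'(b))^2$ vanishes). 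As $b \to \infty$, the assumption that $p \geq 0$ on $[0,\infty)$ forces the leading coefficient of $p$ to be positive, so $p(b) \to \infty$; hence $h(b) = O(b^2 (p'(b))^2 e^{-p(b)})$ decays exponentially. Continuity on intermediate compact intervals then gives a uniform bound $h \leq M$, and since $|P_2(x)| \leq 1/6$ and $x^{-2}$ is integrable on $[1,\infty)$, the dominating function is in place.

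For the pointwise limit, fix $x > 0$ and note that $b = tx \to 0^+$ as $t \to 0^+$, so the asymptotic analysis above gives $h(tx) \to \jmin^2$ (or $0$ in the $\jmin = 0$ case, where both sides of the claimed identity vanish). Hence the integrand converges pointwise to $\jmin^2 P_2(x)/x^2$, and the dominated convergence theorem yields the claim. The only nontrivial step, and the same one that drives Lemma~\ref{lem:EM-limit1}, is the small-$b$ analysis of $h$: it is this matching of the orders of vanishing of numerator and denominator that both secures the uniform bound and pins down the constant $\jmin^2$ on the right-hand side.
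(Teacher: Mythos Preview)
Your proposal is correct and is precisely the ``similar argument'' the paper invokes: the paper does not write out a separate proof of Lemma~\ref{lem:EM-limit2} but simply refers back to the proof of Lemma~\ref{lem:EM-limit1}, which is exactly the dominated-convergence argument you describe (including the substitution $b=tx$, the uniform bound on $h(b)$ via the small-$b$ and large-$b$ asymptotics, and the integrable majorant $M\,|P_2(x)|/x^2$). Your observation that the limit should be $t\to 0^+$ rather than $t\to\infty$ is also correct.
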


	\begin{lemma} \label{lem:EM-piece-1} As $t \to 0^+$ we have 
		\begin{align*}
		\sum_{k \geq 1} \log(1 - \exp(-p(tk))) &=  t^{-1}\int_0^\infty \log(1 - \exp(-p(y)))\,dy - \frac{\one_{\jmin = 0}}{2}\log(1 - e^{-\bsg_0}) \\
		&\quad + \one_{\jmin \geq 1}\left(\frac{ \jmin}{2}\log(2\pi) -\frac{1}{2} \log(\bsg_{\jmin}) + \frac{\jmin}{2}\log(1/t)\right) + o(1)\,.
		\end{align*}
	\end{lemma}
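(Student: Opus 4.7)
The plan is to apply Euler--Maclaurin (Lemma \ref{lem:EM}) on $[1,N]$ to the function $F(x):=\log(1-\exp(-p(tx)))$ and let $N\to\infty$; the boundary terms at $N$ vanish because $F(N)$ and $F'(N)$ decay exponentially once $tN$ is large. This yields
$$\sum_{k\geq 1}F(k)=\int_1^\infty F(x)\,dx+\frac{F(1)}{2}-\frac{F'(1)}{12}-\int_1^\infty F''(x)\frac{P_2(x)}{2}\,dx,$$
and each of the four pieces is analyzed as $t\to 0^+$, with the behavior split according to whether $\jmin=0$ or $\jmin\geq 1$.

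For the integral, substituting $y=tx$ gives
$$\int_1^\infty F(x)\,dx=t^{-1}\int_0^\infty\log(1-e^{-p(y)})\,dy-t^{-1}\int_0^t\log(1-e^{-p(y)})\,dy.$$
When $\jmin=0$, the integrand near $0$ equals $\log(1-e^{-\bsg_0})+O(y)$, so the correction is $\log(1-e^{-\bsg_0})+O(t)$. When $\jmin\geq 1$, using $p(y)=\bsg_{\jmin}y^{\jmin}(1+O(y))$ one has $\log(1-e^{-p(y)})=\log\bsg_{\jmin}+\jmin\log y+O(y)$, and the correction evaluates to $\log\bsg_{\jmin}+\jmin\log t-\jmin+o(1)$.

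For the remaining pieces, in the case $\jmin=0$ one has $F(1)/2\to\tfrac{1}{2}\log(1-e^{-\bsg_0})$, $F'(1)=O(t)$, and $F''(x)=t^2g''(tx)\to 0$ pointwise with a dominating bound derived as in Lemma \ref{lem:EM-limit1}; dominated convergence kills the $P_2$ integral. Adding to the integral term gives the $\jmin=0$ line of the lemma. In the case $\jmin\geq 1$, direct expansion gives $F(1)/2=\tfrac{1}{2}\log\bsg_{\jmin}+\tfrac{\jmin}{2}\log t+o(1)$ and $F'(1)=\jmin+o(1)$ (from $p'(y)/p(y)\sim\jmin/y$). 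The key step is the $P_2$ integral: using
$$g''(y)=\frac{p''(y)}{e^{p(y)}-1}-\frac{(p'(y))^2 e^{p(y)}}{(e^{p(y)}-1)^2},$$
Lemmas \ref{lem:EM-limit1} and \ref{lem:EM-limit2} together yield
$$\int_1^\infty F''(x)\frac{P_2(x)}{2}\,dx\longrightarrow \bigl(\jmin(\jmin-1)-\jmin^2\bigr)\int_1^\infty\frac{P_2(x)}{2x^2}\,dx=-\jmin\!\left(\tfrac{1}{2}\log(2\pi)-\tfrac{11}{12}\right).$$
Summing the four contributions, the $\log\bsg_{\jmin}$ terms add to $-\tfrac{1}{2}\log\bsg_{\jmin}$, the $\log t$ terms add to $-\tfrac{\jmin}{2}\log t$, the constant $\jmin-\tfrac{\jmin}{12}-\tfrac{11\jmin}{12}=0$ cancels cleanly, and the surviving $\tfrac{\jmin}{2}\log(2\pi)$ from the $P_2$ integral produces exactly the formula claimed.

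The main obstacle is the logarithmic singularity at the left endpoint in the case $\jmin\geq 1$: one must carefully isolate the $\log\bsg_{\jmin}+\jmin\log y$ portion of the integrand, account for its contribution through each of the four Euler--Maclaurin pieces, and check that the apparently mismatched constants $\jmin-\tfrac{\jmin}{12}-\tfrac{11\jmin}{12}$ actually cancel to reveal the $\tfrac{\jmin}{2}\log(2\pi)$ Stirling-type constant. Uniformity of the error on compact parameter sets follows because all dominating bounds in Lemmas \ref{lem:EM-limit1} and \ref{lem:EM-limit2} and the tail estimates for $F(N), F'(N)$ are continuous in $\bsg$.
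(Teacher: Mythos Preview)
Your proposal is correct and follows essentially the same approach as the paper: apply Euler--Maclaurin to $F(x)=\log(1-e^{-p(tx)})$ on $[1,\infty)$, split the four resulting terms according to $\jmin=0$ versus $\jmin\geq 1$, and invoke Lemmas~\ref{lem:EM-limit1} and~\ref{lem:EM-limit2} for the $P_2$ integral. Your write-up in fact spells out the final arithmetic (the cancellation $\jmin-\tfrac{\jmin}{12}-\tfrac{11\jmin}{12}=0$) more explicitly than the paper does.
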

	\begin{proof}
		Note that \begin{align*}
		\frac{d}{dx} \log \left(1 - e^{-p(tx)}\right) &=  \frac{t p'(tx)}{e^{p(tx)} - 1}  =: f_1(t,x)\\
		\frac{d^2}{dx^2} \log \left(1 - e^{-p(tx)}\right) &= \frac{ t^2 p''(tx)}{e^{p(tx)} - 1 }   - \frac{e^{p(tx)}t^2 (p'(tx))^2 }{(e^{p(tx)} - 1)^2} \\
		&=: f_2(t,x) \,.
		\end{align*}
		By the Euler-Maclaurin formula, we then have \begin{align*}
		\sum_{k \geq 1} \log(1 - \exp(-p(tk))) &=  \int_1^\infty \log(1 - \exp(-p(tx)))\,dx + \frac{1}{2} \log \left(1 - e^{-p(t)} \right) \\
		&- \frac{1}{12} f_1(t,1) - \frac{1}{2}\int_1^\infty f_2(t,x) P_2(x)\,dx\,.
		\end{align*}
		
		Each piece will be dealt with separately.  Write \begin{align*}
		\int_1^\infty \log(1 - \exp(-p(tx)))\,dx &=t^{-1} \int_0^\infty \log( 1 - \exp(-p(y)))\,dy - \int_0^1 \log(1 - \exp(-p(tx)))\,dx\,. 
		\end{align*}
		Note if $\jmin = 0$ then $$\int_0^1 \log(1 - \exp(-p(tx)))\,dx = \log(1 - e^{-\bsg_0}) + o(1)$$
		and if $\jmin \geq 1$ then 
		\begin{align*}
		\int_0^1 \log(1 - \exp(-p(tx)))\,dx &= \int_0^1 \log( p(tx))\,dx + o(1) \\
		&= \int_0^1 \log(\bsg_{\jmin} t^{\jmin} x^{\jmin})\,dx + o(1) \\
		&= \log(\bsg_{\jmin}) + \jmin \log t - \jmin +o(1)\,.
		\end{align*}
		
		Now compute 
		$$\frac{1}{2} \log \left(1 - e^{-p(t)} \right) = \frac{\one_{\jmin = 0}}{2}\log\left(1 - e^{-\gamma_0} \right) + \frac{\one_{\jmin \geq 1}}{2}\log(\bsg_{\jmin} t^{\jmin}) + o(1)$$
		and $$-\frac{1}{12} f_1(t,1) \sim -\frac{1}{12}\cdot \frac{\jmin t \gamma_{\jmin} t^{\jmin-1}}{\gamma_{\jmin} t^{\jmin}} =-\frac{\jmin}{12}\,.$$
		Applying the previous two lemmas to deal with the $P_2$ term finishes the proof.
	\end{proof}
	
	\begin{proof}[Proof of Lemma \ref{lem:EM-asymptotic}]
		
		Combining \eqref{eq:entropy-pieces} with Lemmas \ref{lem:EM-piece-2} and \ref{lem:EM-piece-1} completes the proof.
	\end{proof}	
	
\end{document}